\newcommand{\R}{\ensuremath{\mathbb{R}}}
\newcommand{\SL}{\mathrm{SL}_2\mathbb{R}\times \mathrm{SL}_2\mathbb{R}}
\newcommand{\Sl}{\mathrm{SL}_2\mathbb{R}}
\def\<{\left < }
\def\>{\right >}
\def\({\left ( }
\def\){\right )}
\def\sech{\,{\rm sech\,}}
\newcommand{\xe}{\mathfrak{X}}
\newtheorem{theorem}{Theorem}[section]   
\newtheorem*{theorem*}{Theorem}          
\newtheorem{lemma}[theorem]{Lemma}
\newtheorem{proposition}[theorem]{Proposition}
\newtheorem*{genericthm*}{\thistheoremname}
\theoremstyle{definition}
\newtheorem{definition}[theorem]{Definition}
\newtheorem{corollary}[theorem]{Corollary}
\newtheorem{example}{Example}[section]
\newtheorem*{remark}{Remark}
\newcommand{\thistheoremname}{}
\theoremstyle{plain}
\newtheorem*{namedthm}{\namedthmname}
\newcounter{namedthm}
\newenvironment{named}[1]
  {\def\namedthmname{#1}%
   \refstepcounter{namedthm}%
   \namedthm\def\@currentlabel{#1}}
  {\endnamedthm}
\title{Degenerate almost complex surfaces in the nearly K\"ahler $\mathbf{\SL}$}
\author{Kristof Dekimpe}
\thanks{The author is supported by the Research Foundation--Flanders (FWO) and the National Natural Science Foundation of China (NSFC) under collaboration project G0F2319N}
\address{{(Kristof Dekimpe\;}KU Leuven, Department of Mathematics, Celestijnenlaan 200B - Box 2400, 3001 Leuven, Belgium}
\email{kristof.dekimpe@kuleuven.be}
\begin{document}
\begin{abstract}
In this paper, we study degenerate almost complex surfaces in the semi-Riemannian nearly K\"ahler $\SL$. The geometry of these surfaces depends on the almost product structure of the ambient space and one can distinguish two distinct cases. The geometry of these surfaces is influenced by the almost product structure of the ambient space, leading to two distinct cases. The first case arises when the tangent bundle of the surface is preserved under the almost product structure, while the second case occurs when the tangent bundle of the surface is not invariant under this structure. In both cases, we obtain a complete and explicit classification.
\end{abstract}
\maketitle
\section*{Introduction}
Perhaps the most important class of almost Hermitian manifolds, (semi-) Riemannian manifolds endowed with a compatible almost complex structure $J$, are the K\"ahler manifolds, for which the almost complex structure $J$ is parallel with respect to the Levi-Civita connection $\nabla$ \cite{Yao}. These manifolds admit compatible complex, symplectic and Riemannian structures, hence the broad interest in them from a geometric point of view. Nearly Kähler manifolds are a relaxed version of Kähler manifolds, where the Kähler condition is not necessarily satisfied. Instead, the tensor field $\nabla J$ must be skew-symmetric \cite{book}. When these manifolds do not meet the Kähler condition, they are referred to as strict nearly Kähler manifolds. An important distinction arising from the relaxation of the Kähler condition is that the fundamental two-form on a nearly Kähler manifold is not necessarily closed, which means they may not be symplectic \cite{gray2}. 

Specific studies of these manifolds were initiated in the 1970s by Gray \cite{gray1}, and Nagy provided a classification result for strict nearly Kähler manifolds in 2002 \cite{nagy}. Nagy showed that such manifolds are products of 3-symmetric spaces, twistor spaces over quaternionic spaces with positive scalar curvature, and six-dimensional nearly Kähler manifolds. The six-dimensional nearly Kähler manifolds are particularly interesting because they are the lowest-dimensional manifolds in which strict nearly Kähler structures can exist. Additionally, they are the only six-dimensional manifolds that admit Killing spinors \cite{Ines}. Butruille later demonstrated that the only \textit{homogeneous} six-dimensional Riemannian nearly Kähler manifolds are the six-sphere $S^6$, the product manifold $S^3\times S^3$ (not with the usual product metric), the complex projective space $\mathbb{C}P^3$ (not with the Fubini-Study metric) and the flag manifold $F_{2,1}(\mathbb{C}^3)$ \cite{butruille}. 
More recently, Foscolo and Haskin discovered examples of non-homogeneous nearly Kähler structures on $S^6$ and $S^3\times S^3$, highlighting the necessity of homogeneity in the classification \cite{foscolo}. While the semi-Riemannian analogues of the manifolds appearing in Butruille's list have been studied, a complete classification in the semi-Riemannian case remains elusive. In fact, there are examples of semi-Riemannian nearly Kähler manifolds without a Riemannian counterpart \cite{Schafer}.

In this paper, we focus on the semi-Riemannian nearly Kähler $\SL$, which serves as the semi-Riemannian analogue of the nearly K\"ahler $S^3 \times S^3$ \cite{moruz}. Two important classes of submanifolds in almost Hermitian manifolds are the almost complex and the totally real submanifolds. The former refers to submanifolds where the almost complex structure $J$ preserves every tangent space, while the latter refers to submanifolds where $J$ maps every tangent space into the normal space.

In a recent paper by Ghandour and Vrancken \cite{Ghandour} the classification of semi-Riemannian totally geodesic almost complex surfaces in the nearly Kähler $\SL$ was achieved using techniques from the classification in the nearly Kähler $S^3\times S^3$ \cite{bolton}. These papers introduced an almost product structure $P$ on $\SL$, which is an involutive and symmetric endomorphism that anti-commutes with the almost complex structure $J$. This almost product structure plays a significant role in studying submanifolds as it partially determines their intrinsic and extrinsic geometry by its behavior when restricted to the tangent spaces of the submanifolds. We present a generalization of the results obtained by Ghandour and Vrancken in this paper, as we classify all degenerate almost complex surfaces in the semi-Riemannian nearly Kähler manifold $\SL$. It is important to emphasize that degenerate surfaces can only exist in semi-Riemannian manifolds and not in Riemannian manifolds. However, it is worth noting that the notion of degenerate almost complex surfaces remains well-defined, as the almost complex condition is independent of the induced metric. To simplify the analysis of these surfaces we introduce the distribution $\mathcal{D}$ on a degenerate almost complex surface $\Sigma$, defined as 
\begin{align}\label{def: distrD}
	\mathcal{D}=T\Sigma \oplus PT\Sigma,
\end{align}
with $P$ the almost product structure of the nearly K\"ahler $\SL$. One can thus consider two distinct cases. In the first case, the tangent bundle of the surface is preserved under the almost product structure, resulting in a two-dimensional distribution $\mathcal{D}$. The second case arises when the tangent bundle of the surface is not invariant under the almost product structure, leading to a four-dimensional distribution $\mathcal{D}$.
The first main result of this paper is the classification of degenerate almost complex surfaces when the almost product structure $P$ preserves the tangent bundle of the surface.  
\begin{named}{Main Theorem 1}\label{Main theorem 1}
Let $\Sigma$ be an immersed, degenerate almost complex surface in the nearly K\"ahler \\${(\SL, J, g)}$ for which the almost product structure $P$ preserves the tangent bundle. Then the surface $\Sigma$ is locally congruent to an open part of the immersion 
\begin{align*}
	F: \mathbb{R}^2\rightarrow\SL: (s, t)\mapsto\left(\begin{pmatrix}
		1 & 2s \\
		0 & 1\\
	\end{pmatrix},\begin{pmatrix}
		1 & 2t \\
		0 & 1\\
	\end{pmatrix}\right).
\end{align*}
\end{named}
One can also consider the second case, where the tangent bundle of the surface is not invariant under the almost product structure $P$. Notably, the four-dimensional distribution $\mathcal{D}$ cannot be degenerate since the maximal dimension of a degenerate six-dimensional semi-Riemannian manifold with an index of two is precisely two. Consequently, it becomes always possible to locally define a unique (up to sign) vector field $X$ on the surface $\Sigma$ that satisfies the following conditions:
\begin{align}\label{eq:VFX}
	g(X, PX) = 1 \quad \text{and} \quad g(X, JPX) = 0.
\end{align}
This vector field can then be employed to introduce a constant angle function that locally determines these surfaces. This is demonstrated in \ref{Main theorem 2}.
\begin{named}{Main Theorem 2}\label{Main theorem 2}
Let $\Sigma$ be an immersed, degenerate almost complex surface in the nearly K\"ahler \\${(\SL, J, g)}$ for which the almost product structure $P$ does not preserve the tangent bundle. Let $X$ be the locally defined, unique (up to a sign) vector field satisfying Equation (\ref{eq:VFX}) and $\nabla$ the Levi-Civita connection of the nearly K\"ahler metric $g$. The following statements then hold:
\begin{itemize}
	\item[(i)] There exists a constant angle function $\phi$ on $\Sigma$ that satisfies
	\begin{align*}
		\cos\phi=g(\nabla_X X, (\nabla_X J)PX).
	\end{align*} 
	\item[(ii)]If two immersions $F, G:\Sigma\rightarrow:\SL$ are degenerate almost complex surfaces with the same constant angle function $\phi$, then the immersions locally coincide.
\end{itemize}
\end{named}
If we consider an immersed, degenerate almost complex surface 
\begin{align*}
F:\Sigma\rightarrow\SL:(x, y)\mapsto(p(x, y), q(x, y)),
\end{align*} in the nearly K\"ahler $\SL$, then it is also possible to analyze the immersions $p,q:\Sigma\rightarrow \SL$ in the respective semi-Riemannian factors $\Sl$. One of the implications of \ref{Main theorem 2} is that these immersions are Lorentzian isoparametric surfaces in $\Sl$. By examining these isoparametric surfaces, we can achieve a complete classification of all degenerate almost complex surfaces in $\SL$ where the almost product structure $P$ does not preserve the tangent bundle. This will be demonstrated in \ref{theorem:classifying result}.

This paper is organized as follows: we begin with some preliminaries, focusing on defining nearly Kähler manifolds and their relevant submanifolds.
In Section \ref{section:SL}, we define the nearly Kähler structure on the product manifold $\SL$ and derive general properties of this space. We introduce an almost product structure $P$, which allows us to describe the geometry of the semi-Riemannian nearly Kähler $\SL$ in terms of the nearly Kähler metric $g$, the almost complex structure $J$, and the almost product structure $P$.
Section \ref{section:proofMT1} presents a proof for \ref{Main theorem 1}, classifying all degenerate almost complex surface for which $P$ preserves the tangent bundle. In Section \ref{section:proofMT2}, we explore the case where $P$ does not preserve the tangent bundle of the surface. This leads us to prove \ref{Main theorem 2}, which demonstrates that a locally determined constant angle function $\phi$ characterizes such surfaces. Section \ref{section:Isoparam} focuses on the analysis of the components of an immersion $F$ of a degenerate almost complex surface in the nearly K\"ahler $\SL$ into the separate semi-Riemannian factors $\Sl$. We establish that these immersions are Lorentzian isoparametric surfaces in $\Sl$ and provide a classification based on the introduced angle function $\phi$. In Section \ref{section:classifyingresult} we provide examples of degenerate almost complex surfaces for which $P$ does not preserve the tangent bundle and prove \ref{theorem:classifying result}, obtaining a complete classification of these surfaces in the nearly K\"ahler $\SL$.

\section{Preliminaries}
In this section, we will provide a review of fundamental definitions, properties, and formulas related to nearly semi-Riemannian submanifolds and general nearly Kähler manifolds. In Section \ref{section:SL}, we will elaborate on the nearly K\"ahler structure on $\SL$ and introduce an almost product structure $P$, which allows us to describe the manifold's geometry in terms of $P$ and the almost complex structure $J$.
\subsection{Semi-Riemannian submanifolds}
A semi-Riemannian manifold is a manifold with a metric $g$, which is not required to be positive definite, but merely non-degenerate. Consequently, one can distinguish three types of tangent vectors to such a manifold: a vector $v$ is \textit{spacelike} if $g(v,v) > 0$ or $v=0$, \textit{timelike} if $g(v,v) < 0$ and \textit{lightlike} or \textit{null} if $g(v,v) = 0$ but $v \neq 0$. We call a semi-Riemannian submanifold $M$ of an ambient semi-Riemannian manifold $(\tilde{M},g)$ \textit{spacelike} if the induced metric on $M$ is Riemannian and \textit{Lorentzian} if the index $\nu$ of the induced metric on $M$ satisfies $\nu=1$. If $M$ is an immersed submanifold in a semi-Riemannian manifold $(\tilde{M},g)$ and if the metric $g$ restricted to the submanifold $M$ is completely degenerate, then one calls $M$ a \textit{degenerate submanifold}. 

We now consider an immersed semi-Riemannian submanifold $F:M\rightarrow \tilde{M}$ of an ambient semi-Riemannian manifold $(\tilde{M},g)$ and the immersion $F$ will be omitted from the notation, as locally every immersion is an embedding. The following formulas and notations will be used throughout this paper. Denote by $\nabla$ and $\tilde{\nabla}$ the Levi-Civita connections on $M$ and $\tilde{M}$, respectively.  The formulas of Gauss and Weingarten respectively state that
\begin{align}
& \widetilde{\nabla}_X Y=\nabla_X Y + h(X,Y), \label{Gaussformula}\\
& \widetilde{\nabla}_X \xi = -A_{\xi}X + \nabla^{\perp}_X \xi\label{Weingarten}
\end{align}
for vector fields $X$ and $Y$ tangent to $M$ and a vector field $\xi$ normal to $M$. Here, $h$ is the second fundamental form, taking values in the normal bundle, $A_{\xi}$ is the shape operator associated to the normal vector field $\xi$ and $\nabla^{\perp}$ is the normal connection. The mean curvature vector field $H$ is defined as the averaged trace of the second fundamental form $h$. A submanifold $M$ is a \textit{minimal submanifold} if the mean curvature vector field $H$ is everywhere zero.
The second fundamental form and the shape operator are related by
\begin{align*}
g(h(X,Y),\xi)=g(A_\xi X,Y),
\end{align*}
while the normal connection is used in defining the covariant derivative of the second fundamental form as
\begin{align*}
(\overline{\nabla}h)(X,Y,Z)=\nabla^\perp_X h(Y,Z)-h(\nabla_XY)-h(Y,\nabla_x Z),
\end{align*}
for all vector fields $X$, $Y$ and $Z$ tangent to $M$.
Denote by $X^T$ and $X^\perp$ the tangent and normal part of a vector field $X$ with respect to the submanifold $M$. The equations of Gauss, Codazzi and Ricci will also be used in this paper and can now respectively be stated as 
\begin{align}
(\tilde{R}(X,Y)Z)^T&=R(X,Y)Z +A_{h(X,Z)}Y- A_{h(Y,Z)}X \label{Gaussequation},\\
(\tilde{R}(X,Y)Z)^\perp&=(\overline{\nabla}h)(X,Y,Z)-(\overline{\nabla}h)(Y,X,Z)\label{Codazzi},\\
(\tilde{R}(X,Y)\xi)^\perp&=R^\perp(X,Y)\xi+h(A_\xi X,Y)-h(X, A_\xi Y),\label{Ricci}
\end{align}
for tangent vector fields $X,Y,Z$ and normal vector field $\xi$. Here, $R$ and $\tilde{R}$ are the curvature tensors of $M$ and $\tilde{M}$, respectively, while $R^\perp$ is the normal curvature tensor. To conclude this section, we recall the definition of the semi-Riemannian hyperbolic space $H^n_s$. Let $\mathbb{R}^n_s$ denote $\R^n = \{(x_1,\ldots,x_n) \ | \ x_1,\ldots,x_n \in~\R \}$ equipped with the inner product
\begin{align*}
\langle (x_1,\ldots,x_n) , (y_1,\ldots,y_n) \rangle = -x_1 y_1 - \ldots - x_s y_s + x_{s+1} y_{s+1} + \ldots + x_n y_n.
\end{align*}
For $s=0$, the space $\R^n_0 = \R^n$ is just the Euclidean space of dimension $n$ and for $s>1$, we call $\R^n_s$ the semi-Euclidean space of dimension $n$ and index $s$. It is a flat manifold, i.e., a semi-Riemannian manifold with constant sectional curvature $0$. We now define
\begin{align*}
H^n_s(c) =\left\{x\in\mathbb{R}^{n+1}_{s+1} \ | \ \left\langle x,x \right\rangle = 1/c \right\} \mbox{ for }c<0.
\end{align*}
The manifold $H^n_s(c)$, equipped with the induced metric from $\R^{n+1}_{s+1}$, is a complete semi-Riemannian manifold with constant sectional curvature $c$. Together with the semi-Euclidean $\mathbb{R}^n_s$ and the semi-Riemannian spheres $S^n_s(c)$, it forms the semi-Riemannian space forms, often denoted as $\tilde{M}^n_s(c)$. When the curvature $c$ satisfies $c=\pm1$, we use the shorthand $S^n_s(c)$ and $H^n_s$. If the index $s$ is equal to zero or one, then the space forms are called Riemannian space forms or Lorentzian, respectively. In particular, the three-dimensional Lorentzian real space forms  $\mathbb{R}^3_1$, $S^3_1$ and $H^3_1$ are known as the \textit{Minkowksi spacetime}, the \textit{de Sitter spacetime} and the \textit{anti-de Sitter} spacetime. In Section \ref{section:Correspondence} we will construct a local isometry between the anti-de Sitter space $H^3_1$ and the semi-Riemannian $\Sl$. We also present the definition of an isoparametric hypersurface of semi-Riemannian space form $\tilde{M}^n_s(c)$, with $c$ the constant sectional curvature.
\begin{definition}
Let $M$ be a semi-Riemannian hypersurface of a semi-Riemannian real space form $\tilde{M}^n_s(c)$. Then $M$ is \textit{isoparametric} if it has constant principal curvatures. 
\end{definition}
\subsection{Nearly K\"ahler manifolds}
In this paper we will study the degenerate almost complex surfaces of the nearly K\"ahler $\SL$, which is a six-dimensional strict nearly K\"ahler manifold. We first introduce the notion of a nearly K\"ahler manifold and present some useful lemmas and formulas. 
\begin{definition}
A differentiable manifold $M$ is said to be \textit{almost complex} if there exist a differentiable (1,1) tensor field $J$ such that $J^2=-Id$. An \textit{almost Hermitian} manifold is an almost complex manifold $(M, J)$, endowed with a compatible semi-Riemannian metric $g$, meaning that for all vector fields $X, Y$ on $M$ it holds that
\begin{align*}
g(JX, JY)=g(X, Y).
\end{align*}
\end{definition}
Notice that an almost complex manifold is necessarily even-dimensional and orientable and that it is always possible to endow an almost complex manifold with a compatible Riemannian metric. An important class of almost Hermitian manifolds are the K\"ahler manifolds, given by the following definition.
\begin{definition}
An almost Hermitian manifold $(M,J, g)$ is called a \textit{K\"ahler manifold} if for all vector fields $X, Y$ on $M$ it holds that 
\begin{align*}
(\nabla_X J)Y=0,
\end{align*}
with $\nabla$ the Levi-Civita connection of $M$.
\end{definition}
The condition that $\nabla J=0$ means that the almost complex structure $J$ is parallel, thus that is behaves the same on each tangent space of $M$. One can easily show that a K\"ahler manifold has compatible complex, symplectic and Riemannian structures \cite{Yao}. Relaxing the K\"ahler condition then yields the notion of a nearly K\"ahler manifold.
\begin{definition}\label{def:NK}
A \textit{nearly K\"ahler} manifold is an almost Hermitian manifold $(M, J, g)$ on which the almost complex structure $J$ satisfies
\begin{align*}
(\nabla_X J)X&=\nabla_X(JX)-J\nabla_X X=0
\end{align*}
for any tangent vector field $X\in \xe(M)$, with $\nabla$ the Levi-Civita connection on $M$. This is equivalent with saying that the tensor field $G$, defined as $G(X, Y)=(\nabla_X J)Y$, is skew-symmetric for all $X, Y\in\xe(M)$.
\end{definition}
As the tensor $G$ is essential in any study of nearly K\"ahler manifolds, we will first mention some of its elementary properties \cite{Dioos}.
\begin{proposition}[\cite{Dioos}]\label{prop: G}
Let $M$ be a nearly K\"ahler manifold with metric $g$ and almost complex structure $J$. The following expressions hold for the tensor $G$ and the Riemann tensor $R$:
\begin{align*}
&G(X,Y)+G(Y, X)=0,\\
&G(X, JY)+JG(X, Y)=0,\\
&G(X, Y)+G(JX, JY)=0,\\
&g(G(X, Y), Z)+g(G(X, Z), Y)=0,\\
&g(G(X, Y), G(X, Y))=R(X, Y, Y, X)-R(X, Y, JY, JX),\\
&g(G(X, Y), G(W, Z))=R(X, Y, Z, W)-R(X, Y, JZ, JW),\\
&R(W, X, Y, Z)=R(JW, JX, JY, JZ),\\
&2g((\nabla G)(W, X, Y), Z)= \underset{XYZ}{\mathfrak{S}}g(G(W, X), JG(Y, Z)),
\end{align*}
where $\mathfrak{S}$ stands for the cyclic sum over the vector fields $X, Y$ and $Z$.
\end{proposition}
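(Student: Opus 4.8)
The plan is to split the eight identities into a purely algebraic group (the first four) and a curvature group (the last four), and to bootstrap the latter from the former together with a single differentiation step. No machinery beyond the compatibility of $J$ with $g$ and $\nabla$, the commutation (Ricci) identity for second covariant derivatives, and the first Bianchi identity is needed, so I would either cite \cite{Dioos, gray1} for the bookkeeping or reproduce only the steps below.

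First I would establish skew-symmetry $G(X,Y)+G(Y,X)=0$ by polarizing the defining identity $G(X,X)=(\nabla_X J)X=0$ from Definition \ref{def:NK}. The identity $G(X,JY)+JG(X,Y)=0$ then follows by covariantly differentiating $J^2=-\mathrm{id}$, which gives $(\nabla_X J)\circ J+J\circ(\nabla_X J)=0$; this uses only the product rule for $\nabla$ and holds on any almost complex manifold. Combining these two with $J^2=-\mathrm{id}$ yields $G(JX,JY)=G(Y,X)=-G(X,Y)$, which is the third identity. The fourth, $g(G(X,Y),Z)+g(G(X,Z),Y)=0$ — i.e.\ skew-adjointness of the endomorphism $(\nabla_X J)$ — is obtained by differentiating the anti-symmetry of the fundamental two-form $g(JY,Z)+g(Y,JZ)=0$ and using that $\nabla$ is metric.

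For the curvature identities I would first note that differentiating $G(X,X)=0$ and using the skew-symmetry of $G$ gives $(\nabla_Y G)(X,X)=0$, hence, polarizing, $\nabla G$ is also skew in its last two arguments. I would then apply the Ricci commutation identity to the second covariant derivative of $J$ — this is where the curvature tensor $R$ enters — and feed in the $\nabla G$-symmetries together with the first Bianchi identity. This is the step I expect to be the main obstacle: promoting the automatic $J$-invariance of $R$ under $J$ applied to a single pair of slots up to the full invariance $R(W,X,Y,Z)=R(JW,JX,JY,JZ)$ is the genuinely nontrivial input, and it requires careful tracking of the cyclic sums produced by Bianchi's identity. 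Once full $J$-invariance is available, contracting the Ricci identity with appropriate vector fields delivers $g(G(W,X),G(Y,Z))=R(W,X,Y,Z)-R(W,X,JY,JZ)$, and its diagonal specialization $g(G(X,Y),G(X,Y))=R(X,Y,Y,X)-R(X,Y,JY,JX)$.

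Finally, the formula $2g((\nabla G)(W,X,Y),Z)=\mathfrak{S}_{XYZ}\,g(G(W,X),JG(Y,Z))$ would follow by differentiating $g(G(W,X),Z)$, substituting the already-established algebraic identities (skew-symmetry, $J$-anti-commutation, skew-adjointness), and then symmetrizing over $X,Y,Z$; the cyclic sum is forced precisely because the raw expression is not yet symmetric and the antisymmetry of $G$ makes the three cyclic permutations combine into the stated form.
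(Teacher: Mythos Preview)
The paper does not prove Proposition~\ref{prop: G}; it is stated with a citation to \cite{Dioos} and used as a black box. There is therefore no ``paper's own proof'' to compare against. Your outline follows the standard route (essentially Gray's original arguments, as reproduced in \cite{Dioos}): polarize the nearly K\"ahler condition and differentiate the structure equations $J^2=-\mathrm{id}$ and $g(J\cdot,\cdot)=-g(\cdot,J\cdot)$ to obtain the four algebraic identities, then invoke the Ricci commutation identity for $\nabla^2 J$ together with the first Bianchi identity to reach the curvature relations. That is exactly the approach in the cited reference, and your identification of the full $J$-invariance $R(W,X,Y,Z)=R(JW,JX,JY,JZ)$ as the nontrivial step is accurate. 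The only place where your sketch is noticeably thin is the final identity for $2g((\nabla G)(W,X,Y),Z)$: simply ``differentiating $g(G(W,X),Z)$'' and symmetrizing does not by itself produce the cyclic sum of $g(G(W,X),JG(Y,Z))$; one needs to combine the Ricci identity expression for $(\nabla G)$ with the already established formula $g(G(X,Y),G(W,Z))=R(X,Y,Z,W)-R(X,Y,JZ,JW)$ and the second Bianchi identity (or equivalently Gray's computation in \cite{gray1}). If you intend to give a self-contained proof rather than cite \cite{Dioos}, that step should be written out in full.
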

One then immediately gets the following corollary. 
\begin{corollary}
Every two or four dimensional nearly K\"ahler manifold is automatically K\"ahler.
\end{corollary}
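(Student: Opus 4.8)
The approach is to prove directly that the tensor $G$, defined by $G(X,Y) = (\nabla_X J)Y$, vanishes identically; since this is precisely the K\"ahler condition, the corollary follows at once (nearly K\"ahler already gives the skew-symmetry $G(X,Y) = -G(Y,X)$). The whole argument is pointwise on $T_pM$ and linear-algebraic, relying only on the identities collected in Proposition \ref{prop: G}.

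First I would record a few elementary facts on $T_pM$. For any nonzero $X$ the vectors $X$ and $JX$ are linearly independent (a real eigenvalue of $J$ would have to square to $-1$), so $\spanned\{X,JX\}$ is a $2$-dimensional $J$-invariant subspace; and in general every $J$-invariant subspace of $T_pM$ is even-dimensional, since it inherits a complex structure from $J$. From the skew-symmetry $G(X,X)=0$ and the identity $G(X,JY)=-JG(X,Y)$ one obtains $G(X,JX)=0$; then, substituting $Z\in\{X,\,JX,\,Y,\,JY\}$ into $g(G(X,Y),Z)+g(G(X,Z),Y)=0$ and using that $J$ is skew-adjoint for $g$, one finds
\[
G(X,Y)\perp\spanned\{X,\,JX,\,Y,\,JY\}\qquad\text{for all }X,Y\in T_pM .
\]
All of this is short bookkeeping with the list in Proposition \ref{prop: G}.

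In dimension $2$ this already finishes the proof: for $X\neq 0$, $\{X,JX\}$ is a basis of $T_pM$, so bilinearity of $G$ together with $G(X,X)=G(X,JX)=0$ gives $G(X,\cdot)\equiv 0$, hence $G\equiv 0$. In dimension $4$, fix $X\neq 0$ and $Y$. If $Y\in\spanned\{X,JX\}$ then $G(X,Y)=0$ by bilinearity as above. Otherwise $\spanned\{X,JX\}$ and $\spanned\{Y,JY\}$ are two $J$-invariant $2$-planes whose intersection is again $J$-invariant, hence of dimension $0$ or $2$; dimension $2$ would force the two planes to coincide, contradicting $Y\notin\spanned\{X,JX\}$. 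So the intersection is trivial, $\{X,JX,Y,JY\}$ spans $T_pM$, and since $G(X,Y)$ is orthogonal to a spanning set and $g$ is non-degenerate, $G(X,Y)=0$. Thus $G\equiv 0$ and $M$ is K\"ahler.

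There is no serious obstacle here; the one place that needs a little care is the orthogonality relation $G(X,Y)\perp JY$, which — unlike $\perp X$, $\perp JX$, $\perp Y$, all of which fall straight out of $g(G(X,Y),Z)+g(G(X,Z),Y)=0$ — requires the identity $G(X,JY)=-JG(X,Y)$ together with the skew-adjointness of $J$ and a small cancellation. It is worth emphasizing that the argument never uses non-degeneracy of the individual $2$-planes $\spanned\{X,JX\}$ or $\spanned\{Y,JY\}$, only that the four vectors span $T_pM$, so the semi-Riemannian signature (in particular the presence of null vectors) causes no difficulty and no continuity or density argument is needed.
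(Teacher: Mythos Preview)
Your argument is correct and is precisely the computation the paper has in mind: the paper gives no explicit proof, only the remark ``one then immediately gets the following corollary'' after Proposition~\ref{prop: G}, and your proof is the standard unpacking of those identities to force $G(X,Y)\perp\spanned\{X,JX,Y,JY\}$ and hence $G\equiv 0$ in dimensions $2$ and $4$. There is nothing to add or correct.
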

One of the most natural types of submanifolds to consider in almost Hermitian manifolds are the almost complex submanifolds.
\begin{definition}
Let $M$ be a submanifold of an almost Hermitian manifold $(\tilde{M}, J, g)$. Then $M$ is an \textit{almost complex submanifold} if the almost complex structure $J$ preserves the tangent space of $M$, thus if $J(TM)=TM$.
\end{definition}
Notice that, as this definition is independent on the induced metric on the submanifold $M$, that one can in fact consider degenerate almost complex surfaces in semi-Riemannian nearly K\"ahler manifolds. 

\section{The nearly K\"ahler $\SL$}\label{section:SL}
In this section, we introduce the nearly K\"ahler structure on $\SL$ as presented in \cite{Ghandour}. We begin by equipping $\Sl$, the space of real $2\times 2$ matrices with determinant 1, with a semi-Riemannian metric.
\subsection{The semi-Riemannian $\Sl$}
Consider the non-degenerate, indefinite inner product $\left\langle\, , \,\right\rangle$ on $\mathbb{R}^4$, given by
\begin{align*}
	\left\langle(x_1, x_2, x_3, x_4), (y_1, y_2, y_3, y_4)\right\rangle=-\frac{1}{2}(x_1y_4-x_2y_3-x_3y_2+x_4y_1), 
\end{align*} 
with $(x_1, x_2,x_3, x_4),  (y_1, y_2, y_3, y_4)\in\mathbb{R}^4$. One can identify the space of $2\times2$ real matrices $\mathbb{R}^{2\times 2}$ with $\mathbb{R}^4$, thus one can see the above inner product on $\mathbb{R}^{2\times 2}$ as 
\begin{align*}
	\left\langle A,B \right\rangle=-\frac{1}{2}\text{Trace}((\text{adj}A)^TB),
\end{align*}
with $A, B\in\mathbb{R}^{2\times 2}$. This yields an alternative way to describe the space $\Sl$ as 
\begin{align*}
	\Sl=\left\{A\in\mathbb{R}^{2\times 2}\,|\,\left\langle A, A\right\rangle=-1 \right\}.
\end{align*}
As usual, the restriction of the inner product $\left\langle\, , \,\right\rangle$ to the tangent spaces of $\Sl$ will also be denoted by $\left\langle\, , \,\right\rangle$. As $\Sl$ is in fact a Lie group, it has an associated Lie algebra $\mathfrak{sl}_2\mathbb{R}$, which is the space of real traceless $2\times2$ matrices. Hence, the tangent space at a point $A\in\Sl$ can be described as
\begin{align*}
	T_A\Sl=\left\{A\alpha\, |\,\alpha\in\mathfrak{sl}_2\mathbb{R} \right\}.
\end{align*}
The Lie algebra $\mathfrak{sl}_2\mathbb{R}$, equipped with the inner product $\left\langle\, , \,\right\rangle$, has an semi-orthonormal basis consisting of the split-quaternions $\mathfrak{i}, \mathfrak{j}, \mathfrak{k}$, given by
\begin{align}\label{CAH:def:splitquaternions}
	\mathfrak{i}=\begin{pmatrix}
		1 & 0 \\
		0 & -1 \\
	\end{pmatrix},\quad{}
	\mathfrak{j}=\begin{pmatrix}
		0 & 1 \\
		1 & 0 \\
	\end{pmatrix},\quad{}
	\mathfrak{k}=\begin{pmatrix}
		0 & 1 \\
		-1 & 0 \\
	\end{pmatrix},
\end{align}
with inner products
\begin{align*}
	\left\langle \mathfrak{i}, \mathfrak{i}\right\rangle=1,\; \left\langle \mathfrak{j}, \mathfrak{j}\right\rangle=1,\; \left\langle \mathfrak{k}, \mathfrak{k}\right\rangle=-1. 
\end{align*}
Thus one can define tangent vector fields $X_1, X_2, X_3$ on $\Sl$ by 
\begin{align}\label{CAH:def:vectorfieldsX}
	X_1(A)=A\mathfrak{1},\quad{} X_2(A)=A\mathfrak{j}, \quad{}X_3(A)=A\mathfrak{k}
\end{align}
for each point $A\in\Sl$. These vector fields clearly form a semi-orthonormal basis of $T_A\Sl$, ensuring that $(\Sl, \left\langle\; ,\; \right\rangle)$ is a three-dimensional semi-Riemannian manifold of index 1. A straightforward calculation shows that the metric $\left\langle \, , \,\right\rangle$ is a bi-invariant metric and we will see that $(\Sl, \left\langle\; ,\;\right\rangle)$ is in fact locally isometric to the anti-de Sitter space $H^3_1$.
\subsubsection*{\textbf{Correspondence between $\mathbf{\Sl}$ and $\mathbf{H^3_1}$ }}\label{section:Correspondence}
One can straightforwardly construct a local isometry between $(H^3_1, \left\langle\, ,\,\right\rangle)$ and $(\Sl, \left\langle\, , \,\right\rangle)$ as  
\begin{align}\label{eq:corr1}
	\mathfrak{f}_1:H^3_1\rightarrow \Sl: (x_1, x_2, x_3, x_4)\mapsto\begin{pmatrix}
		x_1-x_3 & x_4-x_2\\
		x_4+x_2 & x_1+x_3
	\end{pmatrix}.
\end{align}
There is an alternative way of describing the metric $\left\langle\, ,\,\right\rangle$ on $\mathbb{R}^4_2$, which we will denote by $\left\langle\, , \,\right\rangle_*$ , as 
\begin{align}\label{metricH}
	\left\langle(x_1, x_2, x_3,x_4),(y_1, y_2, y_3, y_4)\right\rangle_{*}&=\frac{1}{2}\,(x_1y_4+x_2y_3+x_3y_2+x_4y_1),
\end{align}
for $(x_1, x_2, x_3, x_4),(y_1, y_2, y_3, y_4)\in\mathbb{R}^4$. Thus an equivalent way of describing the anti-de Sitter space is given by 
\begin{align*}
	H^3_1=\left\{x\in\mathbb{R}^4_2\,|\,\left\langle x, x\right\rangle_*=-1 \right\}.
\end{align*} 
One can then also construct an local isometry between $(H^3_1, \left\langle\, ,\,\right\rangle_*)$ and $(\Sl, \left\langle\, , \,\right\rangle)$ as  
\begin{align}\label{eq:corr2}
	\mathfrak{f}_2:H^3_1\rightarrow \Sl: (x_1, x_2, x_3, x_4)\mapsto\begin{pmatrix}
		x_1 & x_2\\
		x_3 & -x_4
	\end{pmatrix}.
\end{align}
\subsection{Nearly K\"ahler structure on $\SL$}
We will now consider the product space $\SL$, whose tangent space at a point $(A, B)$ can be described using the natural identification $T_{(A,B)}(\SL)\cong T_A \Sl\,\oplus\, T_B\Sl$. The tangent space of $\SL$ at a point $(A,B)$ is thus given by 
\begin{align*}
	T_{(A,B)}\SL&=\left\{(A\alpha, B\beta)\,|\, \alpha, \beta\in\mathfrak{sl}_2\mathbb{R}\right\}.
\end{align*}
This implies that one can write a tangent vector at $(A, B)$ as $Z(A, B)=(U(A,B), V(A,B))$ or simply as $Z=(U,V)$ for $U$ and $V$ vector fields on $\Sl$.
The usual product metric on $\SL$ is then, for $Z=(U, V), Z'=(U', V')$ vector fields on $\SL$, constructed from the metric $\left\langle\, ,\,\right\rangle$ on $\Sl$ as
\begin{align*}
	\left\langle Z, Z'\right\rangle_{\SL}&=\left\langle U, U'\right\rangle+\left\langle V, V'\right\rangle.
\end{align*}
Remark that from now on the product metric $\left\langle\, , \,\right\rangle_{\SL}$  will also simply be denoted by $\left\langle\, , \,\right\rangle$. In an analogous way to the definition of the almost complex structure on $S^3\times S^3$ \cite{moruz}, one can also introduce an almost complex structure $J$ on $\SL$, which is for an arbitrary tangent vector $(A\alpha, B\beta)\in T_{(A, B)}\SL$ given by  
\begin{align}\label{def:J}
	J(A\alpha, B\beta)=\frac{1}{\sqrt{3}}(A(\alpha-2\beta),B(2\alpha-\beta)).
\end{align}
The nearly K\"ahler metric $g$ on $\SL$ is, up to a rescaling, the usual Hermitian metric associated to the usual product metric and is, for $Z, Z'$ vector fields on $\SL$, given by 
\begin{align*}
	g(Z, Z')&=\frac{1}{4}(\left\langle Z, Z'\right\rangle+\left\langle JZ, JZ'\right\rangle), 
\end{align*}
which is clearly compatible with the almost complex structure $J$. The nearly K\"ahler metric can be written explicitly for tangent vectors $Z=(A\alpha, B\beta), Z'=(A\gamma, B\delta)\in T_{(A, B)}\SL$ as
\begin{align*}
	g((A\alpha, B\beta), (A\gamma, B\delta))&=\frac{2}{3}\left\langle(A\alpha, B\beta), (A\gamma, B\delta)\right\rangle-\frac{1}{3}\left\langle(A\beta, B\alpha), (A\gamma, B\delta)\right\rangle.
\end{align*}
Note that the metric can be traced back to the submersion 
\begin{align*}
	\pi: \SL\times SL(2, \mathbb{R})\rightarrow \SL: (A, B, C)\mapsto (AC^{-1}, BC^{-1}),
\end{align*}
making it a semi-Riemannian submersion \cite{moruz}. The following lemma shows an explicit formula for the tensor $G$, given in Definition \ref{def:NK} on $\SL$.
\begin{lemma}
	Let $X=(A\alpha, B\beta), Y=(A\gamma, B\delta)\in T_{(A, B)}\SL$. Then
	\begin{align*}
		G(X,Y)=\frac{2}{3\sqrt{3}}(A(-\alpha\times\gamma-\alpha\times\delta+\gamma\times\beta+2\beta\times\delta),B(-2\alpha\times\gamma+\alpha\times\delta-\gamma\times\beta+\beta\times\delta)),
	\end{align*}
	where $\times$ is defined on $\mathfrak{sl}_2\mathbb{R}$ as $\alpha\times\beta=\frac{1}{2}(\alpha\beta-\beta\alpha)$.
\end{lemma}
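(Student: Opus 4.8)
The plan is to compute $G(X,Y)$ directly from its defining property $G(X,Y) = (\nabla_X J)Y = \tilde\nabla_X(JY) - J\tilde\nabla_X Y$, where $\tilde\nabla$ is the Levi-Civita connection of the nearly K\"ahler metric $g$ on $\SL$. The first step is therefore to obtain an explicit formula for $\tilde\nabla$. Since the nearly K\"ahler metric arises (up to scaling) from a bi-invariant metric on each $\Sl$ factor composed with the semi-Riemannian submersion $\pi$, I expect $\tilde\nabla$ to be expressible through left-invariant vector fields: writing $X = (A\alpha, B\beta)$, $Y = (A\gamma, B\delta)$ with $\alpha,\beta,\gamma,\delta$ possibly functions on $\SL$, one has the usual split $\tilde\nabla_X Y = (\text{derivative of the }\gamma,\delta\text{ components along }X) + (\text{a bilinear "Christoffel" term})$, and only the bilinear term contributes to $G$ because the derivative parts cancel in $\tilde\nabla_X(JY) - J\tilde\nabla_X Y$ once $J$ acts fibrewise by the constant-coefficient formula \eqref{def:J}. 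So I would first record the bi-invariant connection on $\Sl$ (for left-invariant fields $\tilde\nabla^{\Sl}_{A\alpha}(A\gamma) = \tfrac12 A[\alpha,\gamma] = A(\alpha\times\gamma)$), then promote it to a connection formula on $\SL$ with respect to the metric $g$ rather than the product metric; this correction term is the one place the factor structure and the cross-terms mixing the two factors enter, and it is where I expect the main computational effort to lie.

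Concretely, the second step is to determine the correction: if $\nabla^{0}$ denotes the product connection (built from the two bi-invariant connections) and $\tilde\nabla$ the Levi-Civita connection of $g$, then $D(X,Y) := \tilde\nabla_X Y - \nabla^0_X Y$ is a symmetric (torsion-free difference) tensor determined by $2g(D(X,Y),Z) = (\tilde\nabla_X g^0)(\cdots) + \dots$, i.e. by how $g$ differs from $\langle\cdot,\cdot\rangle$. Because $g(Z,Z') = \tfrac23\langle Z,Z'\rangle - \tfrac13\langle \sigma Z, Z'\rangle$ where $\sigma$ is the swap $\sigma(A\alpha,B\beta) = (A\beta,B\alpha)$, and $\sigma$ is $\nabla^0$-parallel (it intertwines the two identical factors), a short Koszul computation should give $D$ purely algebraically in terms of the cross-product $\times$ on $\mathfrak{sl}_2\mathbb R$ and $\sigma$. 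Alternatively, and perhaps more cleanly, I can use the known fact that on a naturally reductive / 3-symmetric space the Levi-Civita connection is $\tilde\nabla = \bar\nabla + \tfrac12 G$ where $\bar\nabla$ is the canonical connection with $\bar\nabla J = \bar\nabla g = 0$; but since deriving $G$ is the goal, I will instead just push through the Koszul formula for $g$.

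The third step is the substitution: with $\tilde\nabla_X Y = \nabla^0_X Y + D(X,Y)$ and $J$ given by \eqref{def:J}, compute
\begin{align*}
G(X,Y) = \tilde\nabla_X(JY) - J\tilde\nabla_X Y = \big(\nabla^0_X(JY) - J\nabla^0_X Y\big) + \big(D(X,JY) - J\,D(X,Y)\big).
\end{align*}
The first bracket is the ``product'' contribution: since $\nabla^0$ on each factor sends $(A\alpha)\mapsto A(\alpha\times\gamma)$ and $J$ acts by the constant matrix $\tfrac{1}{\sqrt3}\big(\begin{smallmatrix}1 & -2\\ 2 & -1\end{smallmatrix}\big)$ on the pair of coefficients, this bracket is a straightforward finite linear-algebra computation in the coefficients $\alpha,\beta,\gamma,\delta$ and the bilinear map $\times$. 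The second bracket is handled with the explicit $D$ from Step 2. Collecting terms — here I must be careful with the coefficients $2/3$, $1/3$ and the normalization $1/\sqrt3$, and with the antisymmetry $\alpha\times\gamma = -\gamma\times\alpha$ to match the precise combination $-\alpha\times\gamma-\alpha\times\delta+\gamma\times\beta+2\beta\times\delta$ in the first slot and $-2\alpha\times\gamma+\alpha\times\delta-\gamma\times\beta+\beta\times\delta$ in the second — should yield exactly the claimed formula with overall constant $\tfrac{2}{3\sqrt3}$.

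Finally, as a consistency check rather than a separate step, I would verify that the resulting $G$ is skew-symmetric in $X,Y$ and satisfies $g(G(X,Y),Z) + g(G(X,Z),Y) = 0$ and $G(X,JY) = -JG(X,Y)$, using Proposition \ref{prop: G}; these are strong enough constraints that passing them essentially pins down the answer and catches sign or coefficient slips. The main obstacle, as flagged, is Step 2: getting the correction tensor $D$ right, because it is the only genuinely ``new'' piece beyond the two independent bi-invariant geometries, and an error there propagates through everything; I expect to organize that computation by exploiting that both $g$ and $\sigma$ are built from $\langle\cdot,\cdot\rangle$ and that $\langle\cdot,\cdot\rangle$ itself has the bi-invariant connection already in hand, so the Koszul formula closes on algebraic expressions in $\times$ and $\sigma$ alone.
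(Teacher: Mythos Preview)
The paper does not actually supply a proof for this lemma; it is stated as a computational fact (the analogous formula for $S^3\times S^3$ appears in the cited references, and the present case follows by the same routine). Your plan---compute the Levi-Civita connection $\tilde\nabla$ of $g$ via Koszul, then evaluate $G(X,Y)=\tilde\nabla_X(JY)-J\tilde\nabla_X Y$---is the standard way to do this and will succeed. Your observation that the coordinate-derivative pieces cancel because $J$ acts by a constant linear map on the $\mathfrak{sl}_2\mathbb R$-coefficients is correct and reduces everything to algebra in $\times$.

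One concrete correction to your Step~2: the swap $\sigma(A\alpha,B\beta)=(A\beta,B\alpha)$ is \emph{not} $\nabla^0$-parallel. For left-invariant extensions one has $\nabla^0_X(\sigma Y)=(A(\alpha\times\delta),B(\beta\times\gamma))$ while $\sigma(\nabla^0_X Y)=(A(\beta\times\delta),B(\alpha\times\gamma))$, and these differ. So the shortcut ``$\sigma$ is $\nabla^0$-parallel, hence $D$ is purely algebraic'' is not available as stated; the non-parallelism of $\sigma$ is precisely what feeds the cross-terms $\alpha\times\delta$, $\gamma\times\beta$ into the answer. This does not break your strategy---the Koszul formula for $g$ still closes on expressions built from $\times$ and $\sigma$, just with a few more terms than you anticipated---but you should drop that assumption and carry the full Koszul computation through (or, equivalently, compute $\tilde\nabla$ directly from $g$ without passing through $\nabla^0$). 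Your proposed consistency checks against Proposition~\ref{prop: G} are a good safeguard for the bookkeeping.
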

As $\SL$ is a product space, it is natural to introduce an almost product structure $P$, which will be compatible with the nearly K\"ahler metric. This (1, 1) tensor field is pointwise defined as
\begin{align}\label{def:almostproduct}
	PZ=P(A\alpha, B\beta)=(A\beta, B\alpha),
\end{align}
for all $Z=(A\alpha, B\beta)\in T_{(A,B)}\SL$.
The following lemma summarizes some elementary properties of the almost product structure.
\begin{lemma}
	The almost product structure $P$ on the nearly K\"ahler $\SL$ satisfies the following properties:
	\begin{align}
		P^2&=Id,\; \text{i.e. $P$ is involutive}\label{eq:Pid},\\
		PJ&=-JP,\; \text{i.e $P$ and $J$ anti-commute},\\
		g(PX, PY)&=g(X,Y),\; \text{i.e. $P$ is compatible with $g$}\label{eq:Pcomp},\\
		g(PX, Y)&=g(X, PY),\; \text{i.e. $P$ is symmetric with respect to the metric $g$}\label{eq:Psymm}.
	\end{align}
\end{lemma}
A straightforward computation shows that the tensor field $\nabla P$ does not vanish identically, thus the $(1, 1)$ tensor field $P$ is not parallel with respect to the nearly K\"ahler Levi-Civita connection. However, the following relations do hold for the almost product structure.
\begin{lemma}\label{lem:propP}
	For vector fields $X, Y$ on $\SL$ the following relations hold:
	\begin{align*}
		&PG(X,Y)+G(PX, PY)=0\\
		&(\nabla_X P)JY=J(\nabla_X P)Y,\\
		&G(X, PY)+PG(X, Y)=-2J(\nabla_X P)Y,\\
		&(\nabla_X P)PY+P(\nabla_X P)Y=0,\\
		&(\nabla_X P)Y+(\nabla_{PX}P)Y=0.
	\end{align*}
\end{lemma}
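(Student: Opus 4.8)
The plan is to verify each of the five identities in Lemma~\ref{lem:propP} by combining the algebraic properties of $G$ collected in Proposition~\ref{prop: G}, the four identities $P^2=\id$, $PJ=-JP$, compatibility and symmetry of $P$ from the preceding lemma, and one structural input: a formula (or characterization) for $\nabla_X P$ on $\SL$. The natural starting point is to differentiate the relation $PJ=-JP$ along an arbitrary vector field $X$ using the product rule for $\nabla$. This gives $(\nabla_X P)J + P(\nabla_X J) = -(\nabla_X J)P - J(\nabla_X P)$, i.e. $(\nabla_X P)JY - J(\nabla_X P)Y = -(\nabla_X J)PY - P(\nabla_X J)Y = -G(X,PY) - PG(X,Y)$. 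So the first, second and third bullet points are not independent: once one knows the relation $PG(X,Y)+G(PX,PY)=0$, together with the skew-symmetry $G(PX,PY) = -G(PY,PX)$ and the behaviour of $G$ under $J$ (namely $G(X,JY) = -JG(X,Y)$), one can massage $G(X,PY)+PG(X,Y)$ into a $J$-image of a $\nabla P$ term, and the second bullet follows by comparing. Thus the logical skeleton is: (a) establish $PG(X,Y)+G(PX,PY)=0$; (b) deduce the $\nabla P$ identities by differentiating $PJ=-JP$; (c) handle the last two bullets separately.

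For step (a), I would use the explicit formula for $G$ from the lemma above together with the definition \eqref{def:almostproduct} of $P$. Writing $X=(A\alpha,B\beta)$, $Y=(A\gamma,B\delta)$, we have $PX=(A\beta,B\alpha)$ and $PY=(A\delta,B\gamma)$, so $G(PX,PY)$ is obtained from the boxed expression by the substitution $\alpha\leftrightarrow\beta$, $\gamma\leftrightarrow\delta$ in \emph{both} components, and $PG(X,Y)$ is obtained by swapping the two components of $G(X,Y)$. A direct inspection of the six cross-product terms in each slot shows these two vectors are negatives of each other; this is the one genuinely computational point, but it is short because the $A$-component of $PG(X,Y)$ must be compared with the $B$-component of $G(PX,PY)$ and vice versa, and the coefficient patterns $(-1,-1,+1,+2)$ versus $(-2,+1,-1,+1)$ transform into one another exactly as required. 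Alternatively, and perhaps more cleanly, one derives $PG(X,Y)+G(PX,PY)=0$ from the $P$-analogue of the second identity in Proposition~\ref{prop: G}: since $P$ is a compatible almost product structure anti-commuting with $J$, one expects $G$ to anti-commute with $P$ on the second slot up to sign, and this can be checked on the generators $X_1,X_2,X_3$ of each factor.

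Once (a) is in hand, step (b) is pure bookkeeping. From $(\nabla_X P)JY - J(\nabla_X P)Y = -G(X,PY) - PG(X,Y)$ derived above, and from (a) rewritten as $PG(X,Y) = -G(PX,PY) = G(PY,PX)$, one gets the third bullet $G(X,PY)+PG(X,Y) = -2J(\nabla_X P)Y$ after checking that the right-hand side is consistent, i.e. that $(\nabla_X P)JY = J(\nabla_X P)Y$ (the second bullet); this last symmetry itself follows by applying $J$ to the displayed consequence of differentiating $PJ=-JP$ and using $J^2=-\id$ together with $P^2=\id$ differentiated, which yields $(\nabla_X P)PY = -P(\nabla_X P)Y$ — that is the fourth bullet. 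So bullets two, three and four all drop out of differentiating the two algebraic relations $PJ=-JP$ and $P^2=\id$ and feeding in (a). The fifth bullet, $(\nabla_X P)Y + (\nabla_{PX}P)Y = 0$, is the one place where the product structure of the manifold enters essentially: I would prove it by writing $\nabla$ on $\SL$ in terms of the submersion $\pi$ (or directly in terms of the split-quaternion frame), noting that $P$ interchanges the two $\Sl$-factors, so that replacing $X$ by $PX$ swaps the roles of the two factors in the Koszul-type formula for $\nabla_X P$, producing an overall sign. The main obstacle is exactly this last identity: it is not a purely tensor-algebraic consequence of $P$, $J$, $g$ and must instead use the specific geometry of $\SL$ (the explicit Levi-Civita connection, e.g. via the known fact that $\nabla$ is expressible through $G$ and the connection of the bi-invariant metric). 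I would allot most of the computational effort there and keep bullets one through four as short frame-based or generator-based verifications.
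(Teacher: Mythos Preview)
The paper does not actually prove this lemma; it is stated immediately after the remark ``a straightforward computation shows \ldots however, the following relations do hold'', with the understanding that the five identities are to be checked directly from the explicit formulas for $J$, $P$, $G$ and the Levi--Civita connection on $\SL$. Your structural approach --- squeeze as much as possible out of differentiating the algebraic relations $PJ=-JP$ and $P^2=\id$, and isolate where the specific geometry of $\SL$ really enters --- is a sensible organisation of that computation and is more illuminating than what the paper offers.

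Two issues, though. First, a sign slip: differentiating $PJ=-JP$ actually yields
\[
(\nabla_X P)JY + J(\nabla_X P)Y \;=\; -\,G(X,PY) - PG(X,Y),
\]
with a plus on the left, not the minus you wrote. Second, and more substantively, your scheme for the second through fourth identities is not self-contained. Differentiating $P^2=\id$ does give the fourth identity, and the displayed equation shows that the second and third identities are \emph{equivalent} to one another --- but neither follows from the first and fourth identities together with the general properties of $G$ in Proposition~\ref{prop: G}. In particular, applying $J$ to the displayed equation and feeding in $J^2=-\id$ and the fourth identity (your proposed route to the second identity) simply reproduces the same relation; substituting $Y\mapsto JY$ does likewise. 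So at least one of identities two or three still needs a genuine computation on $\SL$, exactly as you correctly flag for identity five. Once that gap is acknowledged, your plan --- verify identities one, five, and one of two/three directly from the explicit frame or the formula for $G$, and recover the remaining two algebraically --- amounts to precisely the ``straightforward computation'' the paper invokes.
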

A long, but straightforward computation shows that the Riemann curvature tensor $R$ of the nearly K\"ahler $(\SL, J, g)$ can be written in terms of the metric $g$, the almost complex structure $J$ and the almost product structure $P$, as shown in the following lemma \cite{Ghandour}.
\begin{lemma}[\cite{Ghandour}]\label{lem:R}
	The Riemann curvature tensor on the nearly K\"ahler $\SL$ is given by
	\begin{align}
		R(X,Y,Z)=&-\frac{5}{6}(g(Y,Z)X-g(X,Z)Y)\\
		&-\frac{1}{6}(g(JY,Z)JX-g(JX,Z)JY-2g(JX,Y)JZ)\\
		&-\frac{2}{3}(g(PY,Z)PX-g(PX,Z)PY+g(JPY, Z)JPX-g(JPX, Z)JPY).
	\end{align}
\end{lemma}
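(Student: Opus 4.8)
The statement to establish is the explicit formula for the Riemann curvature tensor $R$ of the nearly K\"ahler $(\SL, J, g)$ in terms of $g$, $J$ and $P$, as recorded in Lemma \ref{lem:R}. My plan is to compute $R$ directly from the algebraic data already assembled in Section \ref{section:SL}, rather than to deduce it from any abstract characterization. The key input objects are the explicit formula for the tensor $G$ from the previous lemma, the defining formulas \eqref{def:J} for $J$ and \eqref{def:almostproduct} for $P$, and the structural identities of Proposition \ref{prop: G} and Lemma \ref{lem:propP}. The essential conceptual tool is that on a nearly K\"ahler manifold the curvature is tightly constrained by the tensor $G$: Proposition \ref{prop: G} supplies the identities $g(G(X,Y),G(W,Z))=R(X,Y,Z,W)-R(X,Y,JZ,JW)$ together with $R(W,X,Y,Z)=R(JW,JX,JY,JZ)$, so that, once all the $G$-contractions are known explicitly, the curvature is essentially pinned down.

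First I would fix the semi-orthonormal frame $X_1,X_2,X_3$ on each factor coming from the split quaternions $\mathfrak{i},\mathfrak{j},\mathfrak{k}$, and assemble a convenient frame for $T_{(A,B)}\SL$ adapted to $J$ and $P$; because $P$ interchanges the two $\Sl$-factors and $J$ is given by the fixed matrix $\tfrac{1}{\sqrt3}\big(\alpha-2\beta,\,2\alpha-\beta\big)$, one can write every tangent vector in terms of the six basic fields $(A\mathfrak{i},0),(A\mathfrak{j},0),(A\mathfrak{k},0),(0,B\mathfrak{i}),(0,B\mathfrak{j}),(0,B\mathfrak{k})$. Second I would compute all cross products $\alpha\times\beta=\tfrac12(\alpha\beta-\beta\alpha)$ among $\mathfrak{i},\mathfrak{j},\mathfrak{k}$ and feed them into the explicit $G$-formula, thereby obtaining $G(X_a,X_b)$ for all frame pairs in closed form. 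Third, using the $G$-contraction identity of Proposition \ref{prop: G}, I would read off the combinations $R(X,Y,Z,W)-R(X,Y,JZ,JW)$ for all frame inputs. Finally, I would combine these with the $J$-invariance relation $R(W,X,Y,Z)=R(JW,JX,JY,JZ)$ to disentangle $R(X,Y,Z,W)$ itself from the differences, and then verify that the resulting array of curvature components is reproduced exactly by the proposed tensor field on the right-hand side of Lemma \ref{lem:R} when evaluated on the same frame. Since both sides are tensors that are multilinear in $X,Y,Z$, agreement on a spanning set of the (six-dimensional) tangent space suffices to conclude equality everywhere.

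The main obstacle will be organizational rather than conceptual: the sheer volume of bilinear bookkeeping. There are many frame combinations to track, and the mixed $J$-- and $P$--symmetries partition them into sectors (pure-$J$ terms, pure-$P$ terms, and mixed $JP$ terms) whose coefficients $-\tfrac56,-\tfrac16,-\tfrac23$ must be matched with care, particularly the asymmetric factor $-2g(JX,Y)JZ$ in the middle line, which signals that the curvature is not of the naive constant-holomorphic-curvature form and arises precisely from the skew-symmetry of $G$. To keep the computation honest I would split it into three groups according to how many arguments lie in the same $\Sl$-factor, and I would cross-check the final formula against the known first Bianchi identity and against the pair-symmetry $R(X,Y,Z,W)=R(Z,W,X,Y)$; any sign error in the split-quaternion multiplication table would surface immediately as a violation of one of these symmetries. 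The derivation of $G$-contractions via Proposition \ref{prop: G} is what makes the approach tractable, as it converts the differential computation of $R$ into a purely algebraic one on the Lie algebra $\mathfrak{sl}_2\mathbb{R}\oplus\mathfrak{sl}_2\mathbb{R}$.
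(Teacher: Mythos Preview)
Your plan has a genuine gap at the ``disentangling'' step. The identity
\[
g\bigl(G(X,Y),G(W,Z)\bigr)=R(X,Y,Z,W)-R(X,Y,JZ,JW)
\]
from Proposition~\ref{prop: G} only determines the part of $R$ that is \emph{anti}-invariant under the substitution $(Z,W)\mapsto(JZ,JW)$. The $J$-invariance relation $R(W,X,Y,Z)=R(JW,JX,JY,JZ)$ does not help you recover the invariant part: if you apply the $G$-identity with $(JZ,JW)$ in place of $(Z,W)$ you simply get the same equation back with the opposite sign, and adding any tensor $S$ satisfying $S(X,Y,Z,W)=S(X,Y,JZ,JW)$ (for instance the curvature of any K\"ahler metric) leaves both constraints intact. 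So the $G$-contractions together with $J$-invariance cannot ``pin down'' $R$; one degree of freedom survives, and the formula in Lemma~\ref{lem:R} contains precisely such an invariant piece (the $-\tfrac56$ and $-\tfrac23$ lines contribute non-trivially to it).

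The paper itself does not prove the lemma: it is quoted from \cite{Ghandour} with the remark that it follows from ``a long, but straightforward computation''. What that computation actually is, and what would close your gap, is a direct evaluation of $R(X,Y)Z=\nabla_X\nabla_YZ-\nabla_Y\nabla_XZ-\nabla_{[X,Y]}Z$. The most efficient route, given the tools already in Section~\ref{section:SL}, is to use Equation~\eqref{CAH:equation:Euclconnection} relating $\nabla$ to the product connection $\nabla^E$; since the curvature of $\nabla^E$ is that of a product of two copies of a space of constant curvature $-1$, you can express $R$ as this known tensor plus correction terms built from $G$, $P$ and their covariant derivatives, and then rewrite everything via Lemma~\ref{lem:propP} and the explicit $G$-formula. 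Your frame setup and symmetry checks are fine as bookkeeping devices, but they must be applied to this direct computation rather than to the underdetermined $G$-contraction system.
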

The expression for the curvature tensor, together with Proposition \ref{prop: G}, then immediately shows the following lemma. 
\begin{lemma}\label{lem:lengthG}
	The tensor $G$ on the nearly K\"ahler $\SL$ satisfies the following expressions:
	\begin{align*}
		&g(G(X, Y), G(Z, W))=-\frac{2}{3}(g(X, Z)g(Y, W)-g(X, W)g(Y, Z)+g(JX, Z)(g(JW, Y)-g(JX, W)g(JZ, Y)),\\
		&G(X, G(Y, Z))=-\frac{2}{3}(g(X, Z)Y-g(X, Y)Z+g(JX, Z)JY-g(JX, Y)JZ), \\
		&(\nabla G)(X, Y, Z)=-\frac{2}{3}(g(X, Z)JY-g(X, Y)JZ-g(JY, Z)X),
	\end{align*}
	for all vector fields $W, X, Y, Z$ on $\SL$.
\end{lemma}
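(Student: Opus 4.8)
The plan is to derive all three identities by specializing the general nearly K\"ahler formulas from Proposition \ref{prop: G} to the ambient $\SL$, using the explicit curvature tensor of Lemma \ref{lem:R} as the only extra input. For the first identity, I would start from the formula $g(G(X,Y),G(W,Z)) = R(X,Y,Z,W) - R(X,Y,JZ,JW)$ in Proposition \ref{prop: G}. Substituting the expression for $R$ from Lemma \ref{lem:R} into both terms, the $P$-dependent pieces should cancel: the term $R(X,Y,JZ,JW)$ produces factors like $g(JPY,JZ)$, and using $PJ=-JP$ together with compatibility of both $J$ and $P$ with $g$, one gets $g(JPY,JZ) = g(PY,Z)$, so the $P$-contributions in $R(X,Y,Z,W)$ and $R(X,Y,JZ,JW)$ agree and subtract to zero. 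Similarly the $-\frac{5}{6}$ scalar-curvature-type term is $J$-invariant in the relevant slots and cancels, while the $-\frac16$ term combines with its $J$-rotated version; carefully tracking the coefficients should collapse everything to $-\frac23\big(g(X,Z)g(Y,W)-g(X,W)g(Y,Z)+g(JX,Z)g(JW,Y)-g(JX,W)g(JZ,Y)\big)$. The bookkeeping of which arguments sit in which slot of $R$, and the antisymmetry $R(X,Y,\cdot,\cdot)=-R(Y,X,\cdot,\cdot)$, is the part most prone to sign errors.

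For the second identity, $G(X,G(Y,Z))$, I would use the relation $g(G(X,G(Y,Z)),W) = g(G(Y,Z),G(W,X))$ coming from the skew-symmetry $g(G(A,B),C)+g(G(A,C),B)=0$ applied twice (equivalently, total skew-symmetry of $(A,B,C)\mapsto g(G(A,B),C)$). Then the right-hand side is an instance of the first identity (with arguments $Y,Z,W,X$), so $g(G(X,G(Y,Z)),W)$ becomes an explicit expression in $g$ and $J$ that is linear in $W$; reading off the vector $G(X,G(Y,Z))$ by non-degeneracy of $g$ gives the claimed formula. Here one must be careful that the metric is indefinite, but non-degeneracy is all that is used, so this is harmless; the main task is again matching coefficients.

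For the third identity, the natural starting point is the last formula of Proposition \ref{prop: G}, namely $2g((\nabla G)(W,X,Y),Z) = \mathfrak{S}_{XYZ}\, g(G(W,X),JG(Y,Z))$. Each summand on the right is of the form $g(G(W,X),JG(Y,Z)) = -g(JG(W,X),G(Y,Z))$ (using $J$ skew and compatible), and using $JG(Y,Z) = -G(Y,JZ)$ (the property $G(X,JY)+JG(X,Y)=0$) this is $g(G(W,X),G(Y,JZ))$, another instance of the first identity. Summing the three cyclic terms, substituting, and simplifying with $g(JU,JV)=g(U,V)$ and $J^2=-Id$ should, after cancellation, leave $-\frac43 g$-terms that halve to the stated $-\frac23\big(g(W,Y)JX - g(W,X)JY - g(JX,Y)W\big)$; then strip the $Z$ by non-degeneracy. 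The hard part throughout is purely organizational: keeping the cyclic sum, the $J$'s, and the signs consistent, and verifying that all $P$-terms genuinely drop out — which they must, since the left-hand sides of all three identities are manifestly $P$-free, but it is worth confirming this cancellation explicitly as a sanity check rather than assuming it.
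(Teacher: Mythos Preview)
Your approach is correct and is exactly what the paper does: it simply states that the lemma follows from the curvature formula (Lemma \ref{lem:R}) together with Proposition \ref{prop: G}, giving no further detail. One small correction to your outline: the $-\tfrac{5}{6}$ term does not cancel on its own but combines with the $-\tfrac{1}{6}$ term under the subtraction $R(X,Y,Z,W)-R(X,Y,JZ,JW)$ to produce the final $-\tfrac{2}{3}$ coefficient, while the $P$-terms do cancel exactly as you describe.
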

Recall that one could also consider the product manifold \\${(\SL, \left\langle\, , \,\right\rangle)}$, whose geometry is intimately  related to that of the nearly K\"ahler $(\SL, g)$. To understand this relation, it is useful to introduce the usual product structure $Q$, pointwise defined as
\begin{align}\label{def:Q}
	QZ=Q(A\alpha, B\beta)=(-A\alpha, B\beta),
\end{align}
for all $Z=(A\alpha, B\beta)\in T_{(A,B)}\SL$.
The almost product structure $Q$ can then be expressed in terms of the almost product structure $P$ and vice versa for all vector fields $Z$ on $\SL$: 
\begin{align*}
	QZ&=-\frac{1}{\sqrt{3}}(2PJZ-JZ),\\
	PZ&=\frac{1}{2}(Z+\sqrt{3}JQZ).
\end{align*}
Using these equations one can write the semi-Euclidean metric $\left\langle\, ,\,\right\rangle$ in  terms of the K\"ahler metric $g$ by:
\begin{align}\label{def:Euclmetric}
	\left\langle Z, Z'\right\rangle=2g(Z, Z')+g(Z, PZ'),
\end{align} 
for all vector fields $Z, Z'$ on $\SL$. It is now possible to show the relation between the Levi-Civita connections $\nabla^E$ and $\nabla$ of the Euclidean metric $\left\langle\, ,\,\right\rangle$ and nearly K\"ahler metric $g$, respectively.
\begin{lemma}
	The relation between the Levi-Civita connection $\nabla$ of the nearly K\"{a}hler metric $g$ and that of the usual product metric $\left\langle\, .\, ,\, .\right\rangle$, denoted by $\nabla^E$, on $\SL$ is given by
	\begin{align}\label{CAH:equation:Euclconnection} 
		\nabla_X^E Y=\nabla_X Y+\frac{1}{2}(JG(X,PY)+JG(Y,PX)).
	\end{align}
\end{lemma}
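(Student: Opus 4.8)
The plan is to avoid computing either connection from scratch and to use instead the characterization of $\nabla^E$ as the unique torsion-free connection on $\SL$ that is compatible with the product metric $\langle\cdot,\cdot\rangle$. Since $\nabla$ is a connection and $(X,Y)\mapsto\tfrac12(JG(X,PY)+JG(Y,PX))$ is a tensor (it is assembled from $J$, $P$ and $G$), the right-hand side of \eqref{CAH:equation:Euclconnection} defines a connection, which I will call $\nabla^E$; it then suffices to check that $\nabla^E$ is torsion-free and that $\langle\cdot,\cdot\rangle$ is $\nabla^E$-parallel. Torsion-freeness is immediate: the correction term $\tfrac12(JG(X,PY)+JG(Y,PX))$ is symmetric in $X$ and $Y$, so $\nabla^E_XY-\nabla^E_YX=\nabla_XY-\nabla_YX=[X,Y]$ because $\nabla$ is torsion-free. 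It is also worth recording, using $G(X,PY)+PG(X,Y)=-2J(\nabla_XP)Y$ (Lemma \ref{lem:propP}) together with $G(X,Y)+G(Y,X)=0$, that this correction term is nothing but the symmetrisation $(\nabla_XP)Y+(\nabla_YP)X$ of $\nabla P$.

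For metric compatibility I would verify $X\langle Y,Z\rangle=\langle\nabla^E_XY,Z\rangle+\langle Y,\nabla^E_XZ\rangle$. Expanding the left-hand side via $\langle\cdot,\cdot\rangle=2g(\cdot,\cdot)+g(\cdot,P\cdot)$ (equation \eqref{def:Euclmetric}), the product rule, and $\nabla g=0$, one finds that all terms containing $\nabla$ cancel against their counterparts on the right, so the whole statement reduces to the pointwise identity
\[
g\big(Y,(\nabla_XP)Z\big)=2g\big(D(X,Y),Z\big)+g\big(D(X,Y),PZ\big)+2g\big(Y,D(X,Z)\big)+g\big(Y,PD(X,Z)\big),
\]
where $D(X,Y):=\tfrac12(JG(X,PY)+JG(Y,PX))$. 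Using that $P$ is symmetric with $P^2=\mathrm{Id}$, $PJ=-JP$, $PG(X,Y)=-G(PX,PY)$ and that $G$ is antisymmetric, a short manipulation gives $PD(X,Y)=-D(X,Y)$, which collapses the $P$-terms and leaves $g(Y,(\nabla_XP)Z)=g(D(X,Y),Z)+g(Y,D(X,Z))$; substituting $D(X,Y)=(\nabla_XP)Y+(\nabla_YP)X$ and using that $\nabla_XP$ is $g$-symmetric (differentiate $g(PY,Z)=g(Y,PZ)$), this is in turn equivalent to the single scalar identity
\[
\underset{X,Y,Z}{\mathfrak{S}}\,g\big((\nabla_XP)Y,Z\big)=0,
\]
the cyclic sum over $X,Y,Z$.

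Proving this cyclic identity is the main obstacle, and I would do it as follows. From $G(X,PY)+PG(X,Y)=-2J(\nabla_XP)Y$ and $PG(X,Y)=-G(PX,PY)$ one gets $(\nabla_XP)Y=\tfrac12J\big(G(X,PY)-G(PX,PY)\big)$, so, with the totally antisymmetric tensor $\omega(A,B,C):=g(G(A,B),C)$ (antisymmetry being part of Proposition \ref{prop: G}) and skew-adjointness of $J$,
\[
g\big((\nabla_XP)Y,Z\big)=\tfrac12\big(-\omega(X,PY,JZ)+\omega(PX,PY,JZ)\big).
\]
The crucial point is the \emph{shuffle identity} $\omega(X,PY,JZ)=-\omega(PX,PZ,JY)$, obtained from a short chain of rewrites using total antisymmetry of $\omega$, $G(X,JZ)=-JG(X,Z)$, skew-adjointness of $J$, symmetry of $P$, $PJ=-JP$, and $PG(X,Z)=-G(PX,PZ)$. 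Feeding this into the cyclic sum and relabelling (using again the antisymmetry of $\omega$ in its first two arguments) shows $\underset{X,Y,Z}{\mathfrak{S}}\,\omega(X,PY,JZ)=\underset{X,Y,Z}{\mathfrak{S}}\,\omega(PX,PY,JZ)$, so the two cyclic sums in the previous display cancel and $\underset{X,Y,Z}{\mathfrak{S}}\,g((\nabla_XP)Y,Z)=0$. Together with torsion-freeness this makes $\nabla^E$ the Levi-Civita connection of $\langle\cdot,\cdot\rangle$, which proves \eqref{CAH:equation:Euclconnection}. (Alternatively, the lemma could be proved by a direct computation in the matrix model, where $\nabla^E_XY=\tfrac12[X,Y]$ on left-invariant vector fields, but that route is considerably more laborious.)
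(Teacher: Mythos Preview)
Your argument is correct. The paper states this lemma without proof (it is quoted from the earlier literature on the nearly K\"ahler $S^3\times S^3$ and its pseudo-Riemannian analogue), so there is no in-paper proof to compare against; the implicit route in the references is the direct matrix-model computation you mention at the end. Your intrinsic proof---checking that the right-hand side defines a torsion-free connection compatible with $\langle\cdot,\cdot\rangle$, reducing compatibility to the cyclic vanishing $\mathfrak{S}_{X,Y,Z}\,g((\nabla_XP)Y,Z)=0$, and then deriving this from the shuffle identity $\omega(X,PY,JZ)=-\omega(PX,PZ,JY)$---uses only the algebraic properties of $J$, $P$, $G$ recorded in Proposition~\ref{prop: G} and Lemma~\ref{lem:propP}. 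This is a cleaner and more transferable argument than the coordinate computation: it makes transparent that the formula is forced by the identities $PJ=-JP$, $PG(X,Y)=-G(PX,PY)$ and $G(X,PY)+PG(X,Y)=-2J(\nabla_XP)Y$, and it would go through verbatim for any nearly K\"ahler manifold carrying an almost product structure satisfying those same relations.
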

It is sometimes also useful to relate the semi-Euclidean connection $D$ of $\mathbb{R}^8_4$ with the semi-Euclidean connection $\nabla^E$ on $\SL$.
\begin{lemma}\label{lem:Euclcon}
	Let $X, Y$ be vector fields on $\SL$. At a point $(A, B)\in\SL$, the relationship between the Levi-Civita connection $D$ of the semi-Riemannian $\mathbb{R}^8_4$ and the semi-Euclidean connection $\nabla^E$ on $(\SL, \left\langle;\right\rangle)$ is given by
	\begin{align*}
		D_X Y=\nabla_X^E Y+\frac{1}{2}\left\langle X, Y\right\rangle (A,B)+\frac{1}{2}\left\langle X, QY\right\rangle (-A,B).
	\end{align*}
\end{lemma}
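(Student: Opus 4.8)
The plan is to compute the second fundamental form of the embedding $\SL \hookrightarrow \mathbb{R}^8_4$ and then read off the tangential/normal decomposition of $D_X Y$. Since $\SL = \Sl \times \Sl$ and each factor sits inside $\mathbb{R}^4$ as the level set $\langle A, A\rangle = -1$, I would first treat a single factor: for $M = \{A \in \mathbb{R}^4 : \langle A,A\rangle_{\Sl} = -1\}$ with the metric $\left\langle\,,\,\right\rangle$ of signature $(2,2)$ (index $2$) on $\mathbb{R}^4$, the position vector $A$ is a normal field to $M$ with $\langle A, A\rangle = -1$. The standard computation of the second fundamental form of a pseudo-hyperbolic quadric gives $D^{\mathbb{R}^4}_U V = \nabla^{\Sl}_U V + \langle U, V\rangle A$ for $U,V$ tangent to $\Sl$ (the sign being fixed by $\langle A,A\rangle = -1$ and $D_U A = U$, so that $\langle D_U V, A\rangle = -\langle V, D_U A\rangle = -\langle V, U\rangle$, forcing the normal component to be $\langle U,V\rangle A$).

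Next I would take the product. Writing $X = (U, V')$, $Y = (U', V)$ \emph{wait}, better: write $X = (X_1, X_2)$ and $Y = (Y_1, Y_2)$ with $X_i, Y_i$ tangent to the $i$-th copy of $\Sl$. The ambient $\mathbb{R}^8_4$ is the product $\mathbb{R}^4_2 \times \mathbb{R}^4_2$ with its product connection $D = D^{(1)} \oplus D^{(2)}$, and $\SL$ sits inside as $M_1 \times M_2$. Applying the single-factor formula in each slot,
\begin{align*}
D_X Y = \bigl(\nabla^{\Sl}_{X_1} Y_1 + \langle X_1, Y_1\rangle A,\; \nabla^{\Sl}_{X_2} Y_2 + \langle X_2, Y_2\rangle B\bigr).
\end{align*}
The first parts combine into $\nabla^E_X Y$, the Levi-Civita connection of the product metric $\left\langle\,,\,\right\rangle$ on $\SL$. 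For the normal parts, I would expand $\langle X_1,Y_1\rangle(A,0) + \langle X_2,Y_2\rangle(0,B)$ in the basis $(A,B)$, $(-A,B)$ of the normal space: since $(A,0) = \tfrac12(A,B) + \tfrac12(-A,B)$ and $(0,B) = \tfrac12(A,B) - \tfrac12(-A,B)$, the normal term becomes $\tfrac12(\langle X_1,Y_1\rangle + \langle X_2,Y_2\rangle)(A,B) + \tfrac12(\langle X_1,Y_1\rangle - \langle X_2,Y_2\rangle)(-A,B)$. Finally I would identify $\langle X_1,Y_1\rangle + \langle X_2,Y_2\rangle = \left\langle X, Y\right\rangle$ (the product metric) and $\langle X_1,Y_1\rangle - \langle X_2,Y_2\rangle = \left\langle X, QY\right\rangle$, using the definition \eqref{def:Q} of $Q$ which flips the sign on the first factor; this yields exactly the claimed formula.

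The only real subtlety is getting the signs right: one must keep track of the index-$2$ signature on each $\mathbb{R}^4$, verify $\langle A,A\rangle = -1$ (not $+1$) so that the quadric is the hyperbolic-type one, and confirm that $(A,B)$ and $(-A,B)$ really do span the normal space of $\SL$ in $\mathbb{R}^8_4$ and have the inner products consistent with $Q$. I would also double-check that the metric appearing in the single-factor second fundamental form is the same $\left\langle\,,\,\right\rangle$ used throughout (it is, by the matrix description $\langle A,B\rangle = -\tfrac12\mathrm{Trace}((\mathrm{adj}\,A)^T B)$), so that no stray factor of $2$ or $\tfrac14$ from the nearly Kähler metric $g$ sneaks in — note the statement is about $\nabla^E$ and $\left\langle\,,\,\right\rangle$, not about $g$. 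Once the signs are pinned down, the computation is entirely routine.
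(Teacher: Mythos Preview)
Your approach is correct and is the standard one: compute the second fundamental form of each pseudo-hyperbolic factor $\Sl\subset\mathbb{R}^4_2$, take the product, and rewrite the normal part in the basis $(A,B),(-A,B)$. The paper itself states this lemma without proof, so there is nothing to compare against; your argument is exactly what one would expect and would be accepted as the proof.

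One small caution, since you yourself flag signs as the only subtlety: you actually commit two sign slips that happen to cancel. You write $(A,0)=\tfrac12(A,B)+\tfrac12(-A,B)$ and $(0,B)=\tfrac12(A,B)-\tfrac12(-A,B)$, but in fact $\tfrac12(A,B)+\tfrac12(-A,B)=(0,B)$ and $\tfrac12(A,B)-\tfrac12(-A,B)=(A,0)$, so these are swapped. Then you identify $\langle X_1,Y_1\rangle-\langle X_2,Y_2\rangle$ with $\langle X,QY\rangle$, whereas from $QY=(-Y_1,Y_2)$ one has $\langle X,QY\rangle=-\langle X_1,Y_1\rangle+\langle X_2,Y_2\rangle$. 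The two errors undo each other and the final formula is right, but when you write it up cleanly you should fix both so that each intermediate line is correct on its own.
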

As the nearly K\"ahler is a semi-Riemannian manifold, one can look for isometries that are in some sense compatible with the almost complex structure $J$ and the almost product structure $P$, as shown in \cite{moruz}.

\begin{lemma}\label{lem:isom}
	The maps $\mathcal{F}_1$  and $\mathcal{F}_{ABC}$, defined as
	\begin{align*}
		&\mathcal{F}_1:\SL\rightarrow\SL: (p,q)\mapsto (q,p),\\
		&\mathcal{F}_{ABC}:\SL\rightarrow\SL: (p,q)\mapsto(ApC, BqC),
	\end{align*} 
	are isometries. Moreover, the following relations hold with respect to the almost complex structure $J$ and almost product structure $P$:
	\begin{alignat*}{5}
		& &&d\mathcal{F}_1\circ J = -J\circ d\mathcal{F}_1,\;\quad{} &&d\mathcal{F}_1\circ P= P\circ\mathcal{F}_1,\\
		& &&d\mathcal{F}_{ABC}\circ J = J\circ d\mathcal{F}_{ABC},\;\quad{} &&d\mathcal{F}_{ABC}\circ P= P\circ\mathcal{F}_{ABC}.
	\end{alignat*}
\end{lemma}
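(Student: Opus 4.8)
The plan is to compute the differentials of $\mathcal{F}_1$ and $\mathcal{F}_{ABC}$ in the left-invariant trivialisation $T_{(A,B)}\SL=\{(A\alpha,B\beta)\mid\alpha,\beta\in\mathfrak{sl}_2\mathbb{R}\}$ and then to verify each of the six assertions by direct substitution into the explicit formulas for the metric $g$, the almost complex structure \eqref{def:J} and the almost product structure \eqref{def:almostproduct}. Because $g$, $J$ and $P$ are all $\mathbb{R}$-linear in the two Lie-algebra slots and involve no matrix product of those slots, the verification reduces to elementary linear algebra once the differentials are known.

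First I would record that both maps are diffeomorphisms: $\mathcal{F}_1$ is an involution, and $\mathcal{F}_{ABC}$ has inverse $(p,q)\mapsto(A^{-1}pC^{-1},B^{-1}qC^{-1})$, where one uses $A,B,C\in\Sl$ so that the images lie in $\SL$. Differentiating a curve through $(A,B)$ with velocity $(A\alpha,B\beta)$ gives at once $d\mathcal{F}_1(A\alpha,B\beta)=(B\beta,A\alpha)$, so that in the trivialisation $d\mathcal{F}_1$ is the slot-swap $\sigma\colon(\alpha,\beta)\mapsto(\beta,\alpha)$. For $\mathcal{F}_{ABC}$ the naive derivative is $(Ap\alpha C,Bq\beta C)$; rewriting $Ap\alpha C=(ApC)(C^{-1}\alpha C)$ puts it in left-invariant form, so that in the trivialisation $d\mathcal{F}_{ABC}$ acts on each slot by the adjoint map $\mathrm{Ad}(C^{-1})\colon\xi\mapsto C^{-1}\xi C$.

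The isometry statements then follow from the fact that, by left-invariance of $\langle\,,\,\rangle$ on $\Sl$, one has $g((A\alpha,B\beta),(A\gamma,B\delta))=\tfrac23(\langle\alpha,\gamma\rangle+\langle\beta,\delta\rangle)-\tfrac13(\langle\beta,\gamma\rangle+\langle\alpha,\delta\rangle)$, which is invariant under the simultaneous swaps $(\alpha,\beta)\mapsto(\beta,\alpha)$ and $(\gamma,\delta)\mapsto(\delta,\gamma)$; hence $\mathcal{F}_1$ is an isometry. For $\mathcal{F}_{ABC}$ each slot is acted on by $\mathrm{Ad}(C^{-1})$, and the bi-invariance of the metric on $\Sl$ is exactly the statement that the induced inner product on $\mathfrak{sl}_2\mathbb{R}$ is $\mathrm{Ad}$-invariant, so the displayed formula for $g$ is again unchanged. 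For the structure relations, $\mathrm{Ad}(C^{-1})$ is linear and therefore commutes with the slot-wise maps $\sigma$ and $j\colon(\alpha,\beta)\mapsto\tfrac1{\sqrt3}(\alpha-2\beta,2\alpha-\beta)$ that represent $P$ and $J$ in the trivialisation, which gives $d\mathcal{F}_{ABC}\circ J=J\circ d\mathcal{F}_{ABC}$ and $d\mathcal{F}_{ABC}\circ P=P\circ d\mathcal{F}_{ABC}$. For $\mathcal{F}_1$, both $d\mathcal{F}_1$ and $P$ are represented by $\sigma$, so $d\mathcal{F}_1\circ P$ and $P\circ d\mathcal{F}_1$ both reduce to $\sigma\circ\sigma$ and agree, while a one-line check yields $\sigma\circ j=-j\circ\sigma$, which is precisely $d\mathcal{F}_1\circ J=-J\circ d\mathcal{F}_1$.

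I do not anticipate a real obstacle; the only point requiring some care is the computation of $d\mathcal{F}_{ABC}$, namely rewriting $Ap\alpha C$ in left-invariant form, which brings in the adjoint action $\mathrm{Ad}(C^{-1})$, together with the observation that the compatibility of $\mathrm{Ad}(C^{-1})$ with $\langle\,,\,\rangle$ on $\mathfrak{sl}_2\mathbb{R}$ is nothing but the bi-invariance of the metric on $\Sl$ already noted in the text.
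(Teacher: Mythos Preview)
Your proof is correct. The paper itself does not provide a proof of this lemma; it merely states the result with a reference to \cite{moruz}. Your direct verification via the left-invariant trivialisation---computing $d\mathcal{F}_1$ as the slot-swap and $d\mathcal{F}_{ABC}$ as the diagonal $\mathrm{Ad}(C^{-1})$-action, then checking each relation against the explicit formulas for $g$, $J$ and $P$---is exactly the natural approach and is carried out cleanly. The one subtle point, that $d\mathcal{F}_1$ changes basepoint while $P$ does not, causes no trouble precisely because the structures $g$, $J$, $P$ are expressed purely in terms of the Lie-algebra slots in the trivialisation and hence are basepoint-independent; you handle this implicitly and correctly.
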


\section{Degenerate almost complex surfaces with two-dimensional distribution $\mathcal{D}$}\label{section:proofMT1}
In this section, we examine the case where the distribution $\mathcal{D}$, defined in Equation (\ref{def: distrD}), on a degenerate almost complex surface $\Sigma$ is two-dimensional. Consequently, the almost product structure $P$ preserves the tangent bundle of this surface. Our objective is now to prove \ref{Main theorem 1}.
\begin{proof}[Proof of Main Theorem 1]
 As the surface $\Sigma$ in $\SL$ is almost complex, one can consider a local frame $\left\{X, JX \right\}$ on $\Sigma$. Due to $\mathcal{D}$ being two-dimensional, one has that the almost product structure $P$ and the product $JP$ leave the tangent spaces invariant. It is thus possible to choose the frame $\left\{X, JX \right\}$ in such a way that the almost product structure with respect to this frame is given by
\begin{align*}
PX=X,\; PJX=-JX.
\end{align*}
As $\Sigma$ is an immersed surface in $\SL$, one can explicitly write the immersion as
\begin{align*}
F:\Sigma\rightarrow \SL:(x,y)\mapsto F(x,y)=(p(x,y), q(x,y)),
\end{align*} 
where $p$ and $q$ are immersions in one of the semi-Riemannian factors $\Sl$. These immersions will play a fundamental role in the classification of the degenerate almost complex surfaces. Using the natural product structure $Q$, defined in Equation (\ref{def:Q}), it is possible to study the individual immersions $p$ and $q$, as for any vector field $Z$ on the immersion $F$ one has
\begin{align*}
dp(Z)&=\frac{1}{2}(d F(Z)-QdF(Z)),\\
dq(Z)&=\frac{1}{2}(d F(Z)+Qd F(Z)).
\end{align*} 
One can now consider the immersions $p$ and $q$ with respect to the frame $\left\{X, JX \right\}$ on the immersion $F$. From the definition of $Q$ one has
\begin{align*}
QX&=-\frac{1}{\sqrt{3}}(2PJX-JX)=\sqrt{3}JX,\quad{}Q(JX)=\frac{1}{\sqrt{3}}X,
\end{align*}
as $X$ and $JX$ are eigenvectors of the almost product structure $P$, with eigenvalues $1$ and $-1$, respectively. The immersions $p$ and $q$ with respect to the frame $\left\{X, JX \right\}$ are thus given by
\begin{align*}
d p(X)&=\frac{1}{2}(X-\sqrt{3}JX),\quad{}dp(JX)=-\frac{1}{2\sqrt{3}}(X-\sqrt{3}JX),\\
d q(X)&=\frac{1}{2}(X+\sqrt{3}JX), \quad{}dq(JX)=\frac{1}{2\sqrt{3}}(X+\sqrt{3}JX).
\end{align*}
One can immediately see that $dp(X)$ and $dp(JX)$ are linearly dependent, while the same also holds for $dq(X)$ and $dq(JX)$. Thus the immersions $p$ and $q$ can be seen as curves $a, b$ in $\Sl$, with 
\begin{align*}
a(s)&=\begin{pmatrix}
a_1(s) & a_2(s) \\
a_3(s) & a_4(s)\\
\end{pmatrix},\quad{}
b(t)=\begin{pmatrix}
b_1(t) & b_2(t) \\
b_3(t) & b_4(t)\\
\end{pmatrix}.
\end{align*}
We thus have the immersion $F:\Sigma\rightarrow \SL: (s, t)\mapsto (a(s), b(t))$. The tangent space at each point is then spanned by
\begin{align*}
F_s=(a_s, 0)=(a(s) \alpha(s), 0)\;\; F_t=(0, b_t)=(0, b(t) \beta(t)), 
\end{align*}
with $\alpha, \beta\in\mathfrak{sl}_2\mathbb{R}$. As the immersion $F$ has to be almost complex, one has that the almost complex structure $J$, defined in equation \ref{def:J}, must preserve the tangent bundle of the surface. This means that the vector fields $JF_s$ and $JF_t$ must be linear combinations of the tangent vector fields $F_s$ and $F_t$, where \begin{align*}
JF_s=\frac{1}{\sqrt{3}}(a(s) \alpha(s), 2b(t) \alpha(s))\;\; JF_t=-\frac{1}{\sqrt{3}}(2a(s) \beta(t), b(t) \beta(t)).
\end{align*}
These relations can then be written as 
\begin{align*}
JF_s&=c_1 F_s+c_2 F_t, \quad{} JF_t=c_3 F_s+c_4 F_t, 
\end{align*}
with expressions 
\begin{align*}
c_1&=\frac{1}{\sqrt{3}},\quad{} c_2=\lambda(s,t),\
\quad{}c_3=\tilde{\lambda}(s,t),\quad{} c_4=-\frac{1}{\sqrt{3}}, 
\end{align*}
where $\lambda, \tilde{\lambda}$ are functions on the immersion $F$.
As the almost complex structure $J$ satisfies $J^2=-Id$, one has that
\begin{align*}
J^2F_s&=\frac{1}{\sqrt{3}}\left(\frac{1}{\sqrt{3}} F_s+\lambda F_t\right)+\lambda\left(\bar{\lambda}F_s-\frac{1}{\sqrt{3}}F_t \right),
\end{align*}
which means that $\lambda\bar{\lambda}=-\frac{4}{3}$. The matrices $\alpha$ and $\beta$ then satisfy the equation $\frac{2}{3}\alpha(s)=\lambda(s, t) \beta(t)$. Differentiating this equation, with respect to the coordinate $s$, yields
$\frac{2}{3}\alpha'(s)=\lambda_s(s, t) \beta(t)$, which shows that $\alpha'(s)$ is parallel with respect to $\alpha(s)$. One can then write the curve $\alpha$ as $\alpha(s)=\lambda(s, 0)\beta(0)$. In an analogous way one has for the curve $\beta$ that $\beta(t)=\bar{\lambda}(0, t)\alpha(0)$. The expressions for the matrices $\alpha$ and $\beta$ thus simplify as
\begin{align*}
\alpha(s)&=\lambda_1(s)\beta(0),\;\; \beta(t)=\lambda_2(t)\alpha(0),
\end{align*}
with $\lambda_1$ and $\lambda_2$ functions on the immersion $F$, which can be assumed to  never  be zero. We will now find a re-parametrization to simplify the functions $\lambda_1$ and $\lambda_2$. Take a function $f$ such that 
$\alpha(f(s))=\tilde{\alpha}(\tilde{s})$. This implies that $\tilde{\lambda}_1(\tilde{s})=\lambda_1(f(s))f'(s)$. Thus one can always find a function $f$ such that $\tilde{\lambda}_1(\tilde{s})$ is everywhere equal to 1. This implies that it is possible to take $\alpha(s)=\beta(0)$, i.e. a constant matrix. Analogously it is possible to take $\beta(t)=\alpha(0)$, thus the matrices $\alpha, \beta$ are completely determined by the value of $\alpha(0)$.  The matrix $\alpha(0)$ lies in the Lie algebra $\mathfrak{sl}_2\mathbb{R})$, thus it can be decomposed with respect to the semi-orthonormal basis $\left\{\mathfrak{i}, \mathfrak{j}, \mathfrak{k} \right\}$ as
\begin{align*}
\alpha(0)&=\alpha_1\mathfrak{i}
+\alpha_2\mathfrak{j}+\alpha_3\mathfrak{k},
\end{align*}
with $\alpha_1, \alpha_2, \alpha_3\in\mathbb{R}$. 

Recall that the surface $\Sigma$ is a degenerate surface, i.e. $g(F_s, F_s)=g(F_s, F_t)=g(F_t, F_t)=0$, which can be translated to the condition that the matrix $\alpha(0)$ has to be degenerate, which means $a_1^2+a_2^2-a_3^2=0$. Lemma \ref{lem:isom} shows that the map $\mathcal{F}_{ABC}$ is an isometry of the nearly K\"ahler $\SL$, which also preserves $P$ and $J$, allowing us to fix the constant matrix $\alpha(0)$ by choosing the values $\alpha_1=0, \alpha_2=1, \alpha_3=1$, yielding
\begin{align*}
\alpha(0)&=\begin{pmatrix}
0 & 2 \\
0 & 0 \\
\end{pmatrix}.
\end{align*}
It is then finally possible to obtain an expression for the curve $a(s)$ by analysing the differential equation $a_s(s)=a(s)\alpha(0)$: 
\begin{align*}
\begin{pmatrix}
a'_1(s) & a'_2(s) \\
a'_3(s) & a'_4(s)\\
\end{pmatrix}&=\begin{pmatrix}
a_1(s) & a_2(s) \\
a_3(s) & a_4(s)\\
\end{pmatrix}\cdot\begin{pmatrix}
0 & 2 \\
0 & 0 \\
\end{pmatrix}=\begin{pmatrix}
0 & 2a_1(s) \\
0 & 2a_3(s)\\
\end{pmatrix},
\end{align*}
which has as solution
\begin{align*}
a(s)=\begin{pmatrix}
A & 2As+B \\
C & 2Cs+D\\
\end{pmatrix},
\end{align*}
with $A, B, C, D\in \mathbb{R}$. After once again applying an isometry of the form $\mathcal{F}_{ABC}$ one can fix the initial condition $\alpha(0)=Id$. After applying an analogous method for the curve $b(t)$, the following solutions are obtained:
\begin{align*}
a(s)=\begin{pmatrix}
1 & 2s \\
0 & 1\\
\end{pmatrix},\quad{}b(t)=\begin{pmatrix}
1 & 2t \\
0 & 1\\
\end{pmatrix}.
\end{align*}
The immersed surface $\Sigma$ is thus congruent to an open part of the immersion \\${\mathcal{F}:\mathbb{R}^2\rightarrow \SL}$, given by
\begin{align*}
\mathcal{F}(s, t)&=(\alpha(s), \beta(t))=\left(\begin{pmatrix}
1 & 2s \\
0 & 1\\
\end{pmatrix},\begin{pmatrix}
1 & 2t \\
0 & 1\\
\end{pmatrix}\right).
\end{align*}
\end{proof}

\section{Proof of \ref{Main theorem 2}}\label{section:proofMT2}
In this section we consider the case when the distribution $\mathcal{D}$, as defined in Equation (\ref{def: distrD}), on an immersed, degenerate almost complex surface $\Sigma$ is four-dimensional. A classical result demonstrates that this distribution cannot be degenerate, as in a six-dimensional manifold with a metric of signature two, a two-dimensional subspace represents the largest possible degenerate subspace \cite{oneill}.
\begin{remark}\label{remark:VFX}
	It is then always possible to locally define a unique (up to sign) vector field $X$ on the surface $\Sigma$ such that $\left\{X, JX, PX, JPX\right\}$ is a basis for the distribution $\mathcal{D}$ and that $X$ satisfies 
	\begin{align}
	g(X, PX)=1,\quad{} g(X, JPX)=0.
	\end{align}
Indeed, one can always take the vector field $X$ to be orthogonal to the vector field $JPX$. It is also clear that it is possible to rescale the vector field $X$ such that $g(X, PX) = \pm 1$. Alternatively, one could define a different frame $\left\{Y, JY\right\}$ on $\Sigma$ by letting $Y = JX$. In this case, it becomes clear that $g(Y, PY) = -g(X, PX)$, which means that we can take $g(X, PX)$ to be equal to $1$, without any loss of generality.
\end{remark}
Let us thus consider a local frame $\left\{X, JX\right\}$ on the surface $\Sigma$, as specified in the previous remark. Proposition \ref{prop: G} now immediately shows that the set of vector fields
\begin{align}\label{def:distrF}
\mathcal{G}&=\left\{X, JX, PX, JPX, G(X, PX), JG(X, PX) \right\},
\end{align}
forms a local frame for $\SL$, when restricted to the surface $\Sigma$. The nearly K\"ahler metric $g$ with respect to this frame, using Lemma (\ref{lem:lengthG}), is then given by
\begin{align*}
\begin{pmatrix}
0&0&1&0&0&0\\
0&0&0&1&0&0\\
1&0&0&0&0&0\\
0&1&0&0&0&0\\
0&0&0&0&\frac{2}{3}&0\\
0&0&0&0&0&\frac{2}{3}
\end{pmatrix}.
\end{align*}
\ref{Main theorem 2} shows that every degenerate almost complex surface in $\SL$ for which the almost product structure $P$ does not leave the tangent bundle invariant, is locally determined by an constant angle function $\phi$. We will now prove this theorem.
\begin{proof}[Proof of Main Theorem 2]
Let $\left\{X, JX\right\}$ be the unique local frame as defined in Remark \ref{remark:VFX} and consider the frame $\mathcal{G}$, given in Equation (\ref{def:distrF}). We will begin the proof of \ref{Main theorem 2} by considering how the nearly K\"ahler connection behaves with respect to the frame $\mathcal{G}$. This is accomplished by first defining the vector fields $\nabla_X X$ and $\nabla_{JX}X$ as 
\begin{align*}
\nabla_X X&=a_1 X+ a_2 JX + a_3 PX +a_4 JPX + a_5 G(X, PX)+ a_6 JG(X, PX),\\\nabla_{JX}X&=b_1 X+ b_2 JX + b_3 PX +b_4 JPX + b_5 G(X, PX)+ b_6 JG(X, PX), 
\end{align*}
with $a_1, \ldots, a_6, b_1\ldots, b_6$ functions on the surface $\Sigma$. We will now find find conditions on these functions. Using the properties of the tensor $G$ and $\nabla G$, shown in Lemma \ref{lem:lengthG}, and the almost product structure $P$ and $\nabla P$, given in Lemma \ref{lem:propP}, one can then construct the nearly K\"ahler connection $\nabla$ with respect to the frame $\mathcal{G}$, when restricted to the surface $\Sigma$.
\begin{align*}
&\nabla_X X=a_1 X+ a_2 JX + a_3 PX +a_4 JPX + a_5 G(X, PX)+ a_6 JG(X, PX),\\
&\nabla_X JX=-a_2 X+ a_1 JX - a_4 PX +a_3 JPX - a_6 G(X, PX)+ a_5 JG(X, PX) ,\\
&\nabla_X PX=a_3 X- a_4 JX + a_1 PX -a_2 JPX + a_5 G(X, PX)+ (\frac{1}{2}-a_6) JG(X, PX),\\
&\nabla_X JPX=a_4 X+ a_3 JX + a_2 PX +a_1 JPX + (\frac{1}{2}+a_6) G(X, PX)+ a_5 JG(X, PX),\\
&\nabla_X G(X, PX)=-\frac{2a_5\lambda}{3} X-\frac{1}{3}(1+2a_6)\lambda JX  -\frac{2a_5\lambda}{3} PX +\frac{2a_6\lambda}{3} JPX + 2a_1 G(X, PX),\\
&\nabla_X JG(X, PX)=\frac{1}{3}(-1+2a_6) X-\frac{2a_5\lambda}{3} JX  -\frac{2a_6\lambda}{3} PX -\frac{2a_5\lambda}{3} JPX + 2a_1 JG(X, PX),\\
&\nabla_{JX}X=b_1 X+ b_2 JX + b_3 PX +b_4 JPX + b_5 G(X, PX)+ b_6 JG(X, PX),\\
&\nabla_{JX}JX=-b_2 X+ b_1 JX + -b_4 PX +b_3 JPX  -b_6 G(X, PX)+ b_5 JG(X, PX),\\
&\nabla_{JX}PX=b_3 X+ -b_4 JX + b_1 PX -b_2 JPX + (\frac{1}{2}+b_5) G(X, PX)- b_6 JG(X, PX),\\
&\nabla_{JX}JPX=b_4 X+ b_3 JX + b_2 PX +b_1 JPX + b_6 G(X, PX)+ (-\frac{1}{2}+b_5) JG(X, PX),\\
&\nabla_{JX}G(X, PX)=-\frac{1}{3}(1+ 2b_5)\lambda X -\frac{2b_6\lambda}{3} JX  -\frac{2b_5\lambda}{3} PX +\frac{2b_6\lambda}{3} JPX + 2b_1 G(X, PX),\\
&\nabla_{JX}JG(X, PX)=\frac{2b_6\lambda}{3} X -\frac{1}{3}(1-2b_5) JX  -\frac{2b_6\lambda}{3} PX -\frac{2b_5\lambda}{3} JPX + 2b_1 JG(X, PX).
\end{align*}
As the Levi-Civita connection $\nabla$ of the nearly K\"ahler metric $g$ is compatible with the metric $g$, it implies the following conditions:
\begin{align*}
a_3=0,\quad{} b_3=0,\quad{} a_1=0,\quad{} b_1=0.
\end{align*}
Note, that even though the surface $\Sigma$ is degenerate, the Lie bracket $[X, JX]$ should still be tangent to the surface. Thus $[X, JX]=\nabla_X JX-\nabla_{JX}X$ should only have components in the direction of $X$ and $JX$, yielding
\begin{align*}
b_3=-a_4,\quad{} b_4=a_3,\quad{} b_5=-a_6,\quad{} b_6=a_5.
\end{align*}
Note that, as the surface $\Sigma$ is degenerate, it is not possible to use the  Gauss equation (\ref{Gaussequation}), Codazzi equation (\ref{Codazzi}) and Ricci equation (\ref{Ricci}). Fortunately, it is still useful to look at the expression for the curvature tensor $R$ of the nearly K\"ahler $\SL$, given in Lemma (\ref{lem:R}), as we will compare it with the general definition of the Riemann curvature tensor. In other words, the following equality should always hold:
\begin{align}\label{eq:compareR}
R(U, V, W)=\nabla_U\nabla_V W-\nabla_V\nabla_U W-\nabla_{[U, V]}W,
\end{align} 
for all vector fields $U, V, W$ on $\SL$, where the left hand side is equal to expression shown in Lemma \ref{lem:R}. By substituting $(U, V, W)$ with $(X, JX, X)$ in the previous equation, one obtains the following (differential) equations:
\begin{align}
a_5^2+a_6^2&=1,\label{eq:cossin}\\
E_2(a_5)+E_1(a_6)&=2a_2a_5-2a_6b_2\label{eq:coneq1},\\
E_2(a_6)-E_1(a_5)&=2a_2a_6+2a_5b_2,\label{eq:coneq2}\\
E_2(a_2)-E_1(b_2)&=a_2^2+b_2^2.
\end{align}
Equation (\ref{eq:cossin}) then immediately implies that it is possible to define a smooth function $\phi$ on $\Sigma$ such that
\begin{align*}
a_5&=\cos\phi,\quad{}a_6=\sin\phi.
\end{align*}
This reduces Equations (\ref{eq:coneq1})-(\ref{eq:coneq2}) to 
\begin{align*}
X(\phi)&=2a_2,\quad{}JX(\phi)=2b_2.
\end{align*}
Substituting $(U, V, W)$ in Equation (\ref{eq:compareR}) with $(X, JX, G(X, PX))$ then yields the following equations: 
\begin{align*}
a_2\cos\phi-b_2\sin\phi&=0,\quad{}b_2\cos+a_2\sin\phi=0,
\end{align*}
implying that $a_2=0$ and $b_2=0$. Note that this shows that $X(\phi)=JX(\phi)=0$, making $\phi$ a constant function. The nearly K\"ahler connection with respect to the frame $\mathcal{G}$ is then given by 
\begin{align*}
&\nabla_X X=\cos\phi G(X, PX)+\sin\phi JG(X, PX),\\
&\nabla_X JX=-\sin\phi G(X, PX)+\cos\phi JG(X, PX),\\
&\nabla_X PX=\cos\phi G(X, PX)+\left(\frac{1}{2}-\sin\phi \right)JG(X, PX),\\
&\nabla_X JPX=\left(\frac{1}{2}+\sin\phi \right)G(X, PX)+\cos\phi JG(X, PX),\\
&\nabla_X G(X, PX)=-\frac{2\cos\phi}{3}X+\frac{2}{3}\left(\sin\phi-\frac{2}{3} \right)JX -\frac{2\cos\phi}{3}PX+\frac{2\sin\phi}{3}JPX,\\
&\nabla_X JG(X, PX)=\frac{2}{3}\left(\sin\phi-\frac{1}{2} \right)X-\frac{2\cos\phi}{3}JX-\frac{2\sin\phi}{3}PX-\frac{2\cos\phi}{3}JPX,\\
&\nabla_{JX}X=-\sin\phi G(X, PX)+ \cos\phi JG(X, PX),\\
&\nabla_{JX}JX=-\cos\phi G(X, PX)-\sin\phi JG(X,PX),\\
&\nabla_{JX}PX=\left(\frac{1}{2}-\sin\phi\right) G(X, PX)-\cos\phi JG(X, PX),\\
&\nabla_{JX}JPX=\cos\phi G(X, PX)-\left(\frac{1}{2}+\sin\phi \right)JG(X, PX),\\
&\nabla_{JX}G(X, PX)=-\frac{2}{3}\left(\frac{1}{2}-\sin\phi \right)X-\frac{2\cos\phi}{3}JX+\frac{2\sin\phi}{3}PX+\frac{2\cos\phi}{3}JPX,\\
&\nabla_{JX}JG(X, PX)=\frac{2\cos\phi}{3}X+\frac{2}{3}\left(\frac{1}{2}-\sin\phi \right)JX-\frac{2\cos\phi}{3}PX+\frac{2\sin\phi}{3}JPX.
\end{align*}
Thus $\phi$ is a constant angle function on the surface $\Sigma$ that satisfies 
\begin{align*}
	g(X, G(X, PX))=\frac{3}{2}\cos\phi,
\end{align*} 
thereby proving the first item of \ref{Main theorem 2}.

Suppose now that $F, G:\Sigma\rightarrow\SL$ are two degenerate, almost complex immersions of the surface $\Sigma$ in the nearly K\"ahler $\SL$, and let $\left\{X, Y\right\}$ and $\left\{\overline{X}, \overline{JX}\right\}$ be their respective frames, as defined in Remark \ref{remark:VFX}. It is always possible, after applying an isometry of the ambient space, to take a point $p\in\Sigma$ such that $F(p)=G(p)$ and that $X_p=\overline{X}_p$ and $JX_p=\overline{JX}_p$. If the local angle function $\phi$ and $\overline{\phi}$ of the immersions $F$ and $G$, respectively, are identical ,then induces the same constant angle function $\phi$ on both immersions, then one can see that
\begin{align*}
g(\nabla_U V, W)=g(\nabla_{\overline{U}}\overline{V}, \overline{W}),
\end{align*}
with $U, V,W$ vector fields on the immersion $F$ and $\overline{U},\overline{V},\overline{W}$ vector fields on the immersion $G$. A classical results now shows that the immersions $F$ and $G$ coincide on a local neighbourhood of the point $p$ \cite{griffiths}, proving the second item of \ref{Main theorem 2}.
\end{proof}
\begin{corollary}
An immediate consequence of the proof of \ref{Main theorem 2}, is that the vector fields $X$ and $JX$, as defined in Remark \ref{remark:VFX}, are coordinate vector fields, as the Lie bracket $[X, JX]$ vanishes. Note that the nearly K\"ahler metric $g$ also induces a semi-Riemannian metric $\tilde{g}$ on the surface $\Sigma$ as $\tilde{g}(U, V)=g(U, PV)$, for all vector fields $U$ and $V$ on $\Sigma$. As one straightforwardly has that $\tilde{g}(X, X)=1,\tilde{g}(X, JX)=0$ and $\tilde{g}(JX, JX)=-1$, with $X$ and $JX$ coordinate vector fields, it is possible to conclude that $(\Sigma, \tilde{g})$ has to be locally isometric to the semi-Euclidean plane $\mathbb{R}^2_1$.
\end{corollary}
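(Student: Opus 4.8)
The plan is to extract everything from the explicit connection formulas established in the proof of \ref{Main theorem 2}. First, for the vanishing of the bracket: there it was shown that $a_1=a_2=a_3=b_1=b_2=b_3=0$, that $a_5=\cos\phi$, $a_6=\sin\phi$, and that $b_5=-a_6$, $b_6=a_5$. Plugging these into the corresponding lines of the connection table gives $\nabla_X JX = -\sin\phi\,G(X,PX)+\cos\phi\,JG(X,PX)$ and $\nabla_{JX}X = -\sin\phi\,G(X,PX)+\cos\phi\,JG(X,PX)$, so
\[
[X,JX]=\nabla_X JX-\nabla_{JX}X=0 .
\]
Since $X$ and $JX$ are pointwise linearly independent and commute, the classical straightening theorem for commuting frames yields local coordinates $(u,v)$ on $\Sigma$ with $\partial_u=X$ and $\partial_v=JX$.

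Second, I check that $\tilde g(U,V)=g(U,PV)$ is a bona fide semi-Riemannian metric on $\Sigma$. Symmetry is Equation \eqref{eq:Psymm}; note that $PV$ need not be tangent to $\Sigma$ in the four-dimensional case, but $g(U,PV)$ still makes sense via the ambient metric, so $\tilde g$ is a well-defined auxiliary (not induced) bilinear form on $\Sigma$ — which is exactly what one wants here, since the genuinely induced metric $g|_{\Sigma}$ is degenerate. Using Remark \ref{remark:VFX}, the anti-commutation $PJ=-JP$, and the compatibility $g(JU,JV)=g(U,V)$, one computes
\[
\tilde g(X,X)=g(X,PX)=1,\qquad \tilde g(X,JX)=-g(X,JPX)=0,\qquad \tilde g(JX,JX)=-g(X,PX)=-1,
\]
so $\tilde g$ is non-degenerate of index one, i.e. a Lorentzian metric on $\Sigma$.

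Third, since $X=\partial_u$ and $JX=\partial_v$ are coordinate vector fields, the matrix of $\tilde g$ in the frame $\{\partial_u,\partial_v\}$ is the constant matrix $\operatorname{diag}(1,-1)$; hence $\tilde g = \de u^2-\de v^2$ on the coordinate neighbourhood, and the chart $(u,v)$ itself is the required local isometry onto an open subset of the semi-Euclidean plane $\mathbb{R}^2_1$. There is no genuine obstacle: the only care needed is in correctly transcribing the signs of the coefficients $a_i,b_i$ from the curvature computation of \ref{Main theorem 2}, and in observing that passing to the auxiliary metric $\tilde g$ rather than $g|_\Sigma$ is legitimate precisely because the induced metric is degenerate.
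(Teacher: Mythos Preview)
Your argument is correct and follows essentially the same route the paper indicates: the corollary in the paper is stated without a separate proof, the computations being declared ``straightforward'' consequences of the connection table in the proof of \ref{Main theorem 2}, and you have simply filled in those details (the vanishing of $[X,JX]$ from the final connection formulas, the values of $\tilde g$ from Remark~\ref{remark:VFX} together with $PJ=-JP$ and $J$-compatibility, and the constant-coefficient metric in the coordinate frame). The only small remark is that your list of vanishing coefficients should also include $a_4=b_4=0$ (these follow from $b_3=-a_4$, $b_4=a_3$ and $a_3=b_3=0$), though since you ultimately read $\nabla_X JX$ and $\nabla_{JX}X$ off the final simplified connection table this is implicit.
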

%
\section{Isoparametric surfaces in $\Sl$}\label{section:Isoparam}
As in Section \ref{section:proofMT1}, when considering an immersion $F:\Sigma\rightarrow\SL:(x, y)\mapsto (p(x, y), q(x, y))$, one can analyze the immersions $p, q:\Sigma\rightarrow\Sl$ in the respective factors $\Sl$, endowed with the usual semi-Riemannian metric $\left\langle\, , \,\right\rangle$. The following section shows that these immersions are flat, isoparametric surfaces in $\SL$, while we also construct a generic null frame on these immersions. We conclude the section by classifying the immersions $p$ and $q$, using the constant angle function $\phi$ defined in \ref{Main theorem 2}.
\subsection{Analyzing the degenerate surface in the factors $\Sl$}\label{section: proppenq}
The following proposition shows that the immersions $p$ and $q$, defined as $F:\Sigma\rightarrow\SL:(x, y)\mapsto (p(x, y), q(x, y))$, are Lorentzian flat isoparametric surfaces in the semi-Riemannian $(\Sl, \left\langle\; ,\;\right\rangle)$, which are then also automatically constant-mean-curvature surfaces. 
\begin{proposition}\label{prop:Isoparam}
Let $F:\Sigma\rightarrow\SL:(x, y)\mapsto(p(x, y), q(x, y))$ be an immersion of a degenerate almost complex surface, for which the almost product structure $P$ does not leave the tangent bundle invariant. Then the immersions $p, q\rightarrow\Sl$ are flat Lorentzian isoparametric surfaces in $(\Sl, \left\langle\, ,\,\right\rangle)$, automatically making them constant-mean-curvature surfaces.
\end{proposition}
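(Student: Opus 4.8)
The plan is to transfer the structure equations obtained in the proof of Main Theorem 2 from $\SL$ down to the individual factors $\Sl$, using the projections $dp(Z)=\tfrac{1}{2}(dF(Z)-Q\,dF(Z))$ and $dq(Z)=\tfrac{1}{2}(dF(Z)+Q\,dF(Z))$ together with the explicit relation between $Q$ and $P$. First I would write $Q$ in terms of $P$ and $J$ via $QZ=-\tfrac{1}{\sqrt 3}(2PJZ-JZ)$, apply this to the frame elements $\{X,JX,PX,JPX\}$ on $\Sigma$, and thereby compute $dp(X),dp(JX)$ and $dq(X),dq(JX)$ as explicit combinations of $X,JX,PX,JPX$. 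Because $P$ does not preserve $T\Sigma$, the vectors $PX$ and $JPX$ are genuinely transverse, so $\{dp(X),dp(JX)\}$ will span a $2$-plane; I would check non-degeneracy of the immersion $p$ directly and record the induced metric on $p$ with respect to $\{x,y\}$ (recall $X,JX$ are coordinate fields, by the Corollary). Using the expression $\langle Z,Z'\rangle = 2g(Z,Z')+g(Z,PZ')$ and the values of $g$ on the frame $\mathcal{G}$, this induced metric should come out constant — in particular flat and Lorentzian (signature $(1,1)$), since $\tilde g(X,X)=1$, $\tilde g(JX,JX)=-1$ appears already in the Corollary.

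Next I would compute the second fundamental form of $p$ in $(\Sl,\langle\,,\,\rangle)$. The key tool is Lemma \ref{lem:Euclcon} relating $D$ (the flat connection of $\mathbb{R}^8_4$) to $\nabla^E$, and Equation (\ref{CAH:equation:Euclconnection}) relating $\nabla^E$ to the nearly Kähler connection $\nabla$; composing these with the projection $Q$ gives $D_Z dp(W)$ in terms of $\nabla_Z W$, $G$-terms, and position-vector terms. Since all the $\nabla_U V$ with $U,V\in\{X,JX\}$ were computed explicitly in the proof of Main Theorem 2 in terms of the constant $\phi$, and since $G$ on $\mathcal{G}$ is controlled by Lemma \ref{lem:lengthG}, every ingredient is explicit and constant-coefficient. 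Decomposing $D_Z dp(W)$ into its part tangent to $p$ (i.e. in $\spanned\{dp(X),dp(JX)\}$) and its normal part yields the second fundamental form $h_p$ of the curve/surface $p$ inside $\Sl\subset\mathbb{R}^8_4$, and then intrinsically inside $\Sl$ as a hypersurface of the space form $\Sl\cong H^3_1$; the shape operator $A_p$ will have constant-in-$(x,y)$ matrix entries (depending only on $\phi$). Constant principal curvatures is then immediate, giving "isoparametric"; flatness follows from the Gauss equation in the space form $H^3_1$ together with the already-established flat induced metric, or directly since the metric in coordinates $(x,y)$ has constant coefficients. The constant-mean-curvature claim is then automatic from constancy of the trace of $A_p$. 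The identical computation with $dq$ (flipping the sign in front of $Q$, equivalently swapping the roles via $\mathcal{F}_1$) handles $q$.

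The main obstacle I anticipate is bookkeeping rather than conceptual: one must carefully push the frame $\{X,JX,PX,JPX,G(X,PX),JG(X,PX)\}$ through two successive connection-comparison formulas and a projection, and correctly separate "tangent to $p$" from "normal to $p$" — in particular one should verify that $\dim\spanned\{dp(X),dp(JX)\}=2$ everywhere (this is where the hypothesis that $P$ does \emph{not} preserve $T\Sigma$ is essential, in contrast to the two-dimensional-$\mathcal{D}$ case of Section \ref{section:proofMT1} where $p$ degenerated to a curve). A secondary subtlety is that $\Sl$ is Lorentzian, so "principal curvatures" must be interpreted in the semi-Riemannian sense (the shape operator need not be diagonalizable), and one should phrase "isoparametric" as "constant characteristic polynomial of $A_p$" rather than literally constant eigenvalues; but since $A_p$ has constant entries in the frame $\{dp(X),dp(JX)\}$, this poses no real difficulty. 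Once the structure equations are in hand, flat + isoparametric + CMC all drop out simultaneously.
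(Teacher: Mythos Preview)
Your proposal is correct and follows essentially the same route as the paper: project the frame $\{X,JX\}$ via $Q$ to obtain $dp(X),dp(JX)$, read off the constant Lorentzian induced metric from $\langle\,,\,\rangle=2g+g(\cdot,P\cdot)$, and then use the connection comparison formulas together with the constant-coefficient structure equations of Main Theorem~2 to see that the shape operator has constant entries in this frame. The only cosmetic difference is that the paper first writes down an explicit unit normal $\eta=-\tfrac12 G(X,PX)+\tfrac{\sqrt3}{2}JG(X,PX)$ and computes $\nabla^E_X\eta,\nabla^E_{JX}\eta$ (Weingarten side) rather than decomposing $D_Z\,dp(W)$ (Gauss side), and then verifies flatness via $K=-1+\lambda_1\lambda_2=0$ rather than directly from the constant metric coefficients; both variants are equivalent and your remark about phrasing ``isoparametric'' via the characteristic polynomial is well taken in view of the non-diagonalizable type~$II$ cases that appear later.
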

\begin{proof}
The first part of the proof will focus on the immersion $p:\Sigma\rightarrow\Sl$, which we will once again analyze using the natural product structure $Q$, defined in Equation (\ref{def:Q}), as shown in the proof of \ref{Main theorem 1}. Recall that for every tangent vector field $Z$ on $\Sigma$ one has that $dp(Z)=\frac{1}{2}(dF(Z)-QdF(Z))$. With respect to the frame $\left\{X, JX \right\}$, defined in the proof of \ref{Main theorem 2}, one then obtains
\begin{align*}
dp(X)&=\frac{1}{2}X-\frac{1}{2\sqrt{3}}JX-\frac{1}{\sqrt{3}}JPX,\quad{}
dp(JX)=\frac{1}{2\sqrt{3}}X+\frac{1}{2}JX-\frac{1}{\sqrt{3}}PX.
\end{align*}
Note that $dp(X)$ and $dp(JX)$ are clearly linearly independent, thus forming a frame for the immersion $p$. The relation between the semi-Euclidean metric $\left\langle\, ,\,\right\rangle $ and the nearly K\"ahler metric $g$, Equation (\ref{def:Euclmetric}), then immediately shows that
\begin{align*}
\left\langle dp(X), dp(X)\right\rangle=\frac{1}{2}, \quad{}\left\langle dp(JX), dp(JX)\right\rangle=-\frac{1}{2},\quad{}\left\langle dp(X), dp(JX)\right\rangle=-\frac{\sqrt{3}}{2},
\end{align*}
making the immersion $p:\Sigma\rightarrow\Sl$ a Lorentzian surface. As $X$ and $ JX$ are coordinate vector fields on the immersion $F:\Sigma\rightarrow\SL$, as shown in the proof of \ref{Main theorem 2}, $dp(X)$ and $dp(JX)$ will also be coordinate vector fields for the immersion $p$. In order to prove that the surface is isoparametric, one should analyze the eigenvalues of the shape-operator with respect to a unit normal vector field. A straightforward calculation shows that the vector field $\eta$, defined as
\begin{align}\label{def:normaleta}
\eta&=-\frac{1}{2}G(X, PX)+\frac{\sqrt{3}}{2}JG(X, PX), 
\end{align}  
is a spacelike unit normal vector field on the immersion $p$ in $\Sl$. It is then sufficient to consider the semi-Euclidean connection $\nabla^E$ on $\Sl$ with respect to the normal vector field $\eta$, thus calculating $\nabla^E_{X}\eta$ and $\nabla^E_{JX}\eta$. Using the frame $\mathcal{G}$ and the connection constructed in the proof of \ref{Main theorem 2}, together with the relation between the semi-Euclidean connection $\nabla^E$ and the nearly K\"ahler connection $\nabla$, specified in Lemma \ref{lem:Euclcon}, yields
\begin{align*}
\nabla^E_{X}\eta&=\left(\cos\phi+\frac{-1+2\sin\phi}{2\sqrt{3}}\right)dp(X)+\left(\frac{1}{2}-\frac{\cos\phi}{\sqrt{3}} +\sin\phi\right)dp(JX),\\
\nabla^E_{JX}\eta&=\left(\frac{1}{2}+\frac{\cos\phi}{\sqrt{3}} -\sin\phi\right)dp(X)+\left(\cos\phi+\frac{1+2\sin\phi}{2\sqrt{3}}\right)dp(JX),
\end{align*}
where $\phi$ is the constant angle function locally determining the immersion $F:\Sigma\rightarrow \SL$. Notice that these expressions simplify, when considering another angle $\psi=\phi+\frac{\pi}{3}$, as
\begin{align}
\nabla^E_{X}\eta&=\frac{-1+4\sin\psi}{2\sqrt{3}}dp(X)+\frac{\sqrt{3}-4\cos\psi}{2\sqrt{3}}dp(JX)\label{eq:ShapeP1},\\
\nabla^E_{JX}\eta&=\frac{\sqrt{3}+4\cos\psi}{2\sqrt{3}}dp(X)+\frac{1+4\sin\psi}{2\sqrt{3}}dp(JX)\label{eq:ShapeP2}.
\end{align}
The eigenvalues of the shape operator of $p$ are then given in function of the constant angle function $\psi$ as 
\begin{align}\label{eq:eigenvP}
\lambda_1&=\frac{1}{\sqrt{3}}\left(-\sqrt{-1-2\cos2\psi}+2\sin\psi \right),\quad{}\lambda_2=\frac{1}{\sqrt{3}}\left(\sqrt{-1-2\cos2\psi}+2\sin\psi \right).
\end{align}
The Gauss equation (\ref{Gaussequation}) for principal curvatures then shows
\begin{align*}
K=c+\lambda_1\lambda_2=0, 
\end{align*}
where $c=-1$ is the sectional curvature of $\Sl$, as it is locally isometric to the anti-de Sitter space $H^3_1$. We can thus conclude that the immersion $p$ a flat surface. As the principal curvatures $\lambda_1, \lambda_2$ are constant,  the immersion $p$ is an isoparametric surface in $\Sl$. The mean curvature vector field $H$ is the trace of the shape operator, immediately showing that
\begin{align*}
H&=\frac{2\sin\psi}{\sqrt{3}}\;\eta,
\end{align*}
making the immersion a constant-mean-curvature surface. We will now follow an analogous approach to analyze the immersion $q:\Sigma\rightarrow\Sl$. With respect to the frame $\left\{X, JX \right\}$ one obtains
\begin{align*}
dq(X)&=\frac{1}{2}X+\frac{1}{2\sqrt{3}}JX+\frac{1}{\sqrt{3}}JPX,\quad{}
dq(JX)=-\frac{1}{2\sqrt{3}}X+\frac{1}{2}JX+\frac{1}{\sqrt{3}}PX, 
\end{align*}
which form a frame for the immersion $q$, with lengths
\begin{align*}
\left\langle dq(X), dq(X)\right\rangle=\frac{1}{2}, \quad{}\left\langle dq(JX), dq(JX)\right\rangle=-\frac{1}{2},\quad{}\left\langle dq(X), dq(JX)\right\rangle=\frac{\sqrt{3}}{2},
\end{align*}
making the immersion $q$ a Lorentzian surface. A straightforward calculation then also shows that the vector field $\omega$, defined as
\begin{align}\label{def:normalomega}
\omega&=\frac{1}{2}G(X, PX)+\frac{\sqrt{3}}{2}JG(X, PX), 
\end{align}  
is a unit normal vector field to the immersion $q$ in $\Sl$. After a substitution of the angle function $\xi=\phi-\frac{\pi}{3}$, the following expressions for the semi-Euclidean connection $\nabla^E$ with respect to the normal vector field $w$ are found: 
\begin{align*}
\nabla_{X}\omega&=\frac{-1+4\sin\xi}{2\sqrt{3}}dq(X)-\frac{\sqrt{3}-4\cos\xi}{2\sqrt{3}}dq(JX),\\
\nabla_{JX}\omega&=\frac{-\sqrt{3}+4\cos\xi}{2\sqrt{3}}dq(X)+\frac{1+4\sin\xi}{2\sqrt{3}}dq(JX).
\end{align*} 
The eigenvalues are then given in function of the constant angle $\xi$ as
\begin{align}\label{eq:eigenvalQ}
\lambda_3&=\frac{1}{\sqrt{3}}\left(-\sqrt{-1-2\cos2\xi}+2\sin\xi \right),\quad{}\lambda_4=\frac{1}{\sqrt{3}}\left(\sqrt{-1-2\cos2\xi}+2\sin\xi \right),
\end{align}
making the immersion $q$ an isoparametric surface, as the principal curvatures are constant. An analogous calculation as done for the immersion $p$ immediately shows that the immersion is a flat surface. The mean curvature vector field $H$ of the immersion has as expression
\begin{align*}
H&=\frac{2\sin\xi}{\sqrt{3}}\omega,
\end{align*} 
which also makes the immersion $q$ a constant mean curvature surface in $\Sl$. 
\end{proof}
We will now construct generic null coordinates on the immersions $p$ and $q$, which will be used in the classification of the degenerate almost complex surfaces $F:\Sigma\rightarrow\SL: (x, y)\rightarrow F(p(x, y), q(x, y))$ in Section \ref{section:types}, while also describing the compatibility conditions between the respective null frames. 
\begin{lemma}\label{lem:null coord}
Let $F:\Sigma\rightarrow\SL:(x, y)\mapsto F((x,y))=(p(x, y), q(x, y))$ be an immersion of a degenerate almost complex surface, for which the almost product structure $P$ does not leave the tangent bundle invariant and consider the immersions $p, q:\Sigma\rightarrow\Sl$. Then one can construct null coordinates $\left\{u,v \right\}$ and $\left\{\tilde{u}, \tilde{v} \right\}$ on the immersions $p$ and $q$, respectively, which satisfy 
\begin{align*}
\tilde{u}&=-\frac{r}{2R}u-\frac{\sqrt{3}}{2rR}v,\quad{}\tilde{v}=\frac{\sqrt{3}rR}{2}u-\frac{R}{2r}v, 
\end{align*}
with $r$ and $R$ non-zero constants on the immersion,  depending on the value of the angle function $\phi$, which locally determines the immersion $F$. 
\end{lemma}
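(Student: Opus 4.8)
The plan is to work simultaneously in both factors, exploiting the two descriptions of the tangent vectors $dp(X),dp(JX)$ and $dq(X),dq(JX)$ in terms of the frame $\mathcal{G}$ established in the proof of Proposition \ref{prop:Isoparam}. First I would observe that since $p$ and $q$ are flat Lorentzian surfaces, each of them admits (local) null coordinates, i.e.\ coordinates in which the induced metric takes the form $2f\,\de u\,\de v$ and in which the two null coordinate directions are the two lightlike directions of the tangent plane. Concretely, I would write down the two null directions inside $\spanned\{dp(X),dp(JX)\}$: using $\left\langle dp(X),dp(X)\right\rangle=\tfrac12$, $\left\langle dp(JX),dp(JX)\right\rangle=-\tfrac12$, $\left\langle dp(X),dp(JX)\right\rangle=-\tfrac{\sqrt3}{2}$, a vector $dp(X)+\mu\,dp(JX)$ is null precisely when $\tfrac12-\sqrt3\,\mu-\tfrac12\mu^2=0$, giving two explicit roots $\mu_\pm$; the analogous computation in the $q$-factor uses $\left\langle dq(X),dq(JX)\right\rangle=+\tfrac{\sqrt3}{2}$ and yields roots $\tilde\mu_\pm$. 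This identifies the null line fields on $p$ and on $q$ as explicit constant-coefficient combinations of $X$ and $JX$ (pulled back via $dp$, resp.\ $dq$).

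Next I would pass from null line fields to genuine null \emph{coordinates}. Since $X$ and $JX$ are themselves coordinate vector fields on $\Sigma$ (Corollary after \ref{Main theorem 2}: $[X,JX]=0$), any constant-coefficient combination $aX+bJX$ is again a coordinate vector field, so the null line fields on $p$ and $q$ integrate to null coordinates $\{u,v\}$ and $\{\tilde u,\tilde v\}$; here is where the still-free scaling constants enter — I would name them $r$ and $R$, chosen so that $\partial_u = \tfrac1r(\text{first null direction})$, $\partial_v = r(\text{second null direction})$ on $p$, and similarly with $R$ on $q$, the specific normalization being dictated by wanting a clean conformal factor later. Because $u,v,\tilde u,\tilde v$ are all obtained by integrating constant-coefficient combinations of the \emph{same} pair $X,JX$, the change of coordinates from $(u,v)$ to $(\tilde u,\tilde v)$ is linear with constant coefficients, and one simply reads the matrix off by expressing $\partial_{\tilde u},\partial_{\tilde v}$ (constant combinations of $dq(X),dq(JX)$, hence of $X,JX$) in terms of $\partial_u,\partial_v$ (constant combinations of $X,JX$). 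Writing this out gives exactly the claimed relations
\begin{align*}
\tilde u=-\frac{r}{2R}u-\frac{\sqrt3}{2rR}v,\qquad \tilde v=\frac{\sqrt3 rR}{2}u-\frac{R}{2r}v,
\end{align*}
with $r,R$ nonzero constants depending on $\phi$ (the dependence entering through the normalization that makes the induced metrics have the desired form; recall $\phi$ appears only through the shape operators, not through the first fundamental forms, so the metric coefficients $\tfrac12,-\tfrac12,\pm\tfrac{\sqrt3}{2}$ are $\phi$-independent, but the convenient choice of $r,R$ tying the two frames together will involve $\psi=\phi+\tfrac\pi3$ and $\xi=\phi-\tfrac\pi3$).

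The one genuinely delicate point is the \emph{consistency} of the two integrations: $u,v$ are defined from $p$ and $\tilde u,\tilde v$ from $q$, yet both are functions on the common domain $\Sigma$, so one must check that the linear relation found pointwise between the coordinate \emph{fields} actually integrates to a linear relation between the coordinate \emph{functions} with no additive constants obstructing it. This is where I would use that $X,JX$ are globally (on the coordinate patch) commuting coordinate fields with, say, $X=\partial_x$, $JX=\partial_y$: then $u,v,\tilde u,\tilde v$ are all affine-linear functions of $(x,y)$ with constant coefficients, additive constants can be absorbed by translating the origin, and the stated formulas follow by pure linear algebra. The main obstacle is thus bookkeeping — pinning down the constants $r,R$ so that the relation comes out in precisely the displayed normalized form — rather than any conceptual difficulty; everything else is forced by flatness, by $[X,JX]=0$, and by the explicit expressions for $dp(X),dp(JX),dq(X),dq(JX)$ already computed in Proposition \ref{prop:Isoparam}.
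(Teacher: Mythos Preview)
Your approach is correct and essentially identical to the paper's: both find the null directions as explicit constant-coefficient combinations of $X,JX$, integrate using $[X,JX]=0$, and read off the linear transition matrix between the two null frames. One small correction worth flagging: the constants $r,R$ are not fixed by a metric normalization (the condition $\langle\partial_u,\partial_v\rangle=1$ already holds for every $r$, since $r$ is precisely the residual boost freedom in a null frame) but rather by normalizing a component of the second fundamental form --- the paper imposes $\langle p_{uu},p_{uu}\rangle=1$, which yields $r^4=3/(1+2\cos\psi)^2$, and this is where the genuine $\phi$-dependence of $r,R$ enters.
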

\begin{proof}
From the proof of Proposition \ref{prop:Isoparam} we get that $dp(X)$ and $dp(JX)$ are coordinate vector fields on the immersion $p$, with length 
\begin{align*}
\left\langle dp(X), dp(X)\right\rangle=\frac{1}{2}, \quad{}\left\langle dp(JX), dp(JX)\right\rangle=-\frac{1}{2},\quad{}\left\langle dp(X), dp(JX)\right\rangle=-\frac{\sqrt{3}}{2}.
\end{align*}
Thus one can construct null vector fields $\partial_u, \partial_v$ on the immersion $p$ as 
\begin{align}\label{eq:coorduvp}
\begin{pmatrix}
\partial_u \\
\partial_v
\end{pmatrix}&=\begin{pmatrix}
r\cos\left(\frac{-5\pi}{12}\right) && r\sin\left(\frac{-5\pi}{12} \right)\\
\frac{1}{r}\cos\left(\frac{\pi}{12} \right)&&\frac{1}{r}\sin\left(\frac{\pi}{12} \right)
\end{pmatrix}\begin{pmatrix}
dp(X)\\
dp(JX)
\end{pmatrix},
\end{align}
with lengths 
\begin{align*}
\left\langle\partial_u, \partial_u\right\rangle=\left\langle\partial_v, \partial_v\right\rangle=0,\quad{}\left\langle\partial_u, \partial_v\right\rangle=1,
\end{align*}
and $r$ a non-zero function on the immersion. Analogously one also obtains null vector fields $\partial_{\tilde{u}}, \partial_{\tilde{v}}$ on the immersion $q$, defined as
\begin{align}\label{eq:coorduvq}
\begin{pmatrix}
\partial_{\tilde{u}}\\
\partial_{\tilde{v}}
\end{pmatrix}&=\begin{pmatrix}
R\cos\left(\frac{11\pi}{12} \right) && R\sin\left(\frac{11\pi}{12} \right)\\
\frac{1}{R}\cos\left(\frac{-7\pi}{12} \right) && \frac{1}{R}\sin\left(\frac{-7\pi}{12} \right)
\end{pmatrix}\begin{pmatrix}
dq(X) \\
dq(JX)
\end{pmatrix},
\end{align}
with lengths
\begin{align*}
\left\langle\partial_{\tilde{u}}, \partial_{\tilde{u}}\right\rangle=\left\langle\partial_{\tilde{v}}, \partial_{\tilde{v}}\right\rangle=0,\quad{}\left\langle\partial_{\tilde{u}}, \partial_{\tilde{v}}\right\rangle=1,
\end{align*}
and $R$ a non-zero function on the immersion $q$.
Remark that as $\left\{X, JX \right\}$ were coordinate vector fields on the immersion ${F:\Sigma\rightarrow\SL}$, that $\left\{dp(X), dp(JX) \right\}$ and $\left\{dq(X), dq(JX) \right\}$ denote the same coordinate frame, thus one can write
\begin{align*}
\begin{pmatrix}
\partial_{\tilde{u}}\\
\partial_{\tilde{v}}
\end{pmatrix}&=\begin{pmatrix}
R\cos\left(\frac{11\pi}{12} \right) && R\sin\left(\frac{11\pi}{12} \right)\\
\frac{1}{R}\cos\left(\frac{-7\pi}{12} \right) && \frac{1}{R}\sin\left(\frac{-7\pi}{12} \right)
\end{pmatrix}\begin{pmatrix}
r\cos\left(\frac{-5\pi}{12}\right) && r\sin\left(\frac{-5\pi}{12} \right)\\
\frac{1}{r}\cos\left(\frac{\pi}{12} \right)&&\frac{1}{r}\sin\left(\frac{\pi}{12} \right)
\end{pmatrix}^{-1}\begin{pmatrix}
\partial_{\tilde{u}}\\
\partial{\tilde{v}}
\end{pmatrix}\\
&=-\frac{1}{2}\begin{pmatrix}
\frac{R}{r} && \sqrt{3}rR\\
-\frac{\sqrt{3}}{rR} && \frac{r}{R}\end{pmatrix}\begin{pmatrix}
\partial_{u}\\
\partial_{v}
\end{pmatrix}.
\end{align*}
Applying the chain rule yields
\begin{align*}
\partial_u&=\frac{\partial \tilde{u}}{\partial u}\partial_{\tilde{u}}+\frac{\partial \tilde{v}}{\partial u}\partial_{\tilde{v}},\quad{}\partial_v=\frac{\partial \tilde{u}}{\partial v}\partial_{\tilde{u}}+\frac{\partial \tilde{v}}{\partial v}\partial_{\tilde{v}},
\end{align*}
which results in the following relation between the respective null coordinates:
\begin{align*}
\tilde{u}&=-\frac{r}{2R}u-\frac{\sqrt{3}}{2rR}v,\quad{}\tilde{v}=\frac{\sqrt{3}rR}{2}u-\frac{R}{2r}v.
\end{align*}
We will now look at the derivatives of the immersions $p$ and $q$ with respect to their respective null frames. One thus needs to consider the semi-Euclidean connection $D$ on $R^4_2$, which at every point $A\in \Sl$ behaves as
\begin{align}\label{eq:connH31}
D_X Y=\nabla_X Y+\left\langle X, Y\right\rangle A,
\end{align}
for all vector fields $X, Y$ on $\Sl$. As before, we will first focus on the immersion $p:\Sigma\rightarrow\Sl$. Remark that we will denote $D_{\partial{u}}\partial_u,D_{\partial{u}}\partial_v$ and $D_{\partial{v}}\partial_v$  as $p_{uu}, p_{uv}$ and $p_{vv}$, respectively with respect to the $\left\{u, v \right\}$, as constructed in Lemma \ref{lem:null coord}.  One then get the following expressions for the immersion $p$:
\begin{align*}
p_{uu}=\nabla_{\partial_u}\partial_u,\quad{} p_{uv}=\nabla_{\partial{u}}\partial_v+p,\quad{} p_{vv}=\nabla_{\partial{v}}\partial_v.
\end{align*}
Using the connection coefficients introduced in the proof of \ref{Main theorem 2}, together with the construction of the null frame $\left\{ \partial_u, \partial_v\right\}$ as shown in Equation (\ref{eq:coorduvp}), then yields
\begin{align}\label{eq:connpuv}
p_{uu}&=\frac{r^2(1+2\cos\psi)}{\sqrt{3}}\eta,\quad{}
p_{uv}=\frac{-2}{\sqrt{3}}\sin\psi\,\eta+p,\quad{}
p_{vv}=\frac{1-2\cos\psi}{r^2\sqrt{3}}\eta,
\end{align}
where $\eta$ is the normal vector to the immersion $p$ in $\Sl$. Note that one can always fix the value of the function $r$, thus fixing the coordinates $\left\{u,v\right\}$ by imposing the condition that $\left\langle p_{uu}, p_{uu}\right\rangle=1$, i.e.
\begin{align*}
r^4=\frac{3}{(1+2\cos\psi)^2}.
\end{align*}
Thus $r$ a non-zero constant, depending on the angle $\phi$, which locally determines the surface $\Sigma$ in $\SL$. Note that this expression is well-defined, except in the case where $1+2\cos\psi=0$. In this exceptional case, the value of $r$ can be determined by the requirement that $\left\langle p_{vv}, p_{vv}\right\rangle =1$. Since the immersion $p$ is flat, this condition is equivalent to fixing the value of a component of the second fundamental form

In an analogous way we obtain that the derivatives of the immersion $q$, with respect to the null coordinates $\left\{\tilde{u},\tilde{v} \right\}$, are given by
\begin{align*}
q_{\tilde{u}\tilde{u}}&=\frac{R^2(1+2\cos\xi)}{\sqrt{3}}\omega,\quad{}
q_{\tilde{u}\tilde{v}}=\frac{-2}{\sqrt{3}}\sin\xi\,\omega+q,\quad{}
q_{\tilde{v}\tilde{v}}=\frac{1-2\cos\xi}{R^2\sqrt{3}}\omega,
\end{align*}
where $\omega$ is the normal vector to the immersion $q$, defined in Equation (\ref{def:normalomega}). It is then also possible to fix the coordinates $\left\{\tilde{u},\tilde{v} \right\}$ on $q$ by requiring that $\left\langle q_{\tilde{u}\tilde{u}}, q_{\tilde{u}\tilde{u}}\right\rangle=1$, or $\left\langle q_{\tilde{v}\tilde{v}},q_{\tilde{v}\tilde{v}}\right\rangle=1$ when $1+2\cos\xi=0$. This also makes $R$ a non-zero constant, depending on the value of the constant angle function $\phi$.
\end{proof}

\subsection{Classification of isoparametric Lorentzian surfaces in $\Sl$}\label{section:types}
We have defined the isoparametric immersions $p$ and $q$ as $F:\Sigma\rightarrow\SL$, where $F(x, y)\rightarrow(p(x, y), q(x, y))$. Here, $F$ represents an immersion of a degenerate almost complex surface in which the almost product structure $P$ does not preserve the tangent bundle. \ref{Main theorem 2} shows that this immersion is locally determined by a constant angle function $\phi$. In this section, our objective is to classify the immersions $p$ and $q$ based on this constant angle function.

Recall from Section \ref{section:Correspondence} that there exists a correspondence between the anti-de Sitter space $H^3_1$ and $\Sl$, while Proposition \ref{prop:Isoparam} shows that the immersion $p:\Sigma\rightarrow\Sl$ and $q:\Sigma\rightarrow\Sl$ are Lorentzian isoparametric surfaces. We will now further analyze these immersions using classification results of Lorentzian surfaces in anti-de Sitter space $H^3_1$, found in \cite{xiao}. The following lemma  already implies that one can not always diagonalize the shape operator of a Lorentzian submanifold \cite{book}. 
\begin{lemma}[\cite{book}]
A self-adjoint linear operator $S$ of an $n$-dimensional vector space $V$, endowed with a Lorentzian scalar product $\left\langle\, ,\,\right\rangle$ can be put into one of the following four forms: 
\begin{alignat*}{5}
&I:\begin{pmatrix}
a_1& &0\\
&\ddots &\\
0& & a_n
\end{pmatrix},\;\quad{}&& II:\begin{pmatrix}
a_0 & 0 & & & 0\\
1 & a_0 & & &\\
& & a_3 & &\\
& & & \ddots &\\
0  & & & & a_n
\end{pmatrix},\\
\\
&III:\begin{pmatrix}
a_0 & 0 & 0 & & & 0\\
0 & a_0 & 1 & & &\\
-1& 0 & a_0 & & &\\
&  &  & a_4 & & &\\
& & & & \ddots &\\
0  & & & &  & a_n
\end{pmatrix},\;\quad{}&&IV:\begin{pmatrix}
a_0 & b_0 & & & 0\\
-b_0 & a_0 & & &\\
& & a_3 & &\\
& & & \ddots &\\
0  & & & & a_n
\end{pmatrix},
\end{alignat*}
where $b_0$ is assumed to be non-zero. In cases $I$ and $IV$, $S$ is represented with respect to an orthonormal basis $\left\{v_1, \ldots, v_n \right\}$ satisfying $\left\langle v_1, v_1\right\rangle=-1, \left\langle v_i, v_j\right\rangle=\delta_{ij}, \left\langle v_1, v_i\right\rangle=0$ for $2\leq i, j\leq n,$  while in cases $II$ and $III$ the basis $\left\{v_1, \ldots, v_n \right\}$ is semi-orthonormal satisfying $\left\langle v_1, v_1\right\rangle=\left\langle v_2, v_2\right\rangle=\left\langle v_1, v_i\right\rangle =\left\langle v_2, v_i\right\rangle=0$ for $3\leq i\leq n$, $\left\langle v_1, v_2\right\rangle=-1$ and $\left\langle v_i, v_j\right\rangle=\delta_{ij}$ otherwise.
\end{lemma}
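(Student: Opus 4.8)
The plan is to prove this by the structure theory of self-adjoint operators on an indefinite scalar-product space, bootstrapping from the ordinary spectral theorem; the one extra ingredient is the bound on totally isotropic subspaces forced by the index being $1$. First I would decompose $V$. Since $S$ is self-adjoint, its real generalized eigenspaces (grouping each real eigenvalue, and each complex-conjugate pair, together) are mutually orthogonal for $\langle\,,\,\rangle$, as $\lambda\langle u,w\rangle=\langle Su,w\rangle=\langle u,Sw\rangle=\mu\langle u,w\rangle$ for eigenvectors, extended to generalized eigenvectors by induction and to the complex case by bilinear extension. Hence $V$ is an orthogonal direct sum of $S$-invariant pieces, each non-degenerate, on each of which $S$ has a single eigenvalue or a single conjugate pair. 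On a piece that is definite the ordinary spectral theorem forces $S$ to be diagonalizable with real eigenvalues and an orthonormal eigenbasis; and since indices add over orthogonal direct sums while the total index is $1$, at most one piece can fail to be positive definite. The definite pieces supply the diagonal block common to all four normal forms, so it remains to classify $S$ on the single (possibly absent) non-definite piece $V_0$, which is Lorentzian.

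For $V_0$ I would split into two cases. If $S|_{V_0}$ carries a conjugate pair $a_0\pm i b_0$ with $b_0\neq0$, then a minimal real $S$-invariant subspace $W\subseteq V_0$ has dimension $2$ and no real eigenvalue; the spectral theorem forbids $W$ being definite, and the $S$-invariance of the radical of $W$ (which would be an eigenline) forbids $W$ being degenerate, so $W$ is a Lorentzian plane. On a null basis of $W$, self-adjointness forces the two diagonal entries of $S|_W$ to coincide, a rescaling of the null basis makes the off-diagonal entries opposite (possible because they have opposite signs, $W$ having no real eigenvalue), and passing to the associated orthonormal basis with $\langle v_1,v_1\rangle=-1$ yields form $IV$; the orthogonal complement of $W$ in $V_0$ is definite and diagonalizes. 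Otherwise $S|_{V_0}$ has a single real eigenvalue $a_0$ and $N:=S|_{V_0}-a_0\,\mathrm{id}$ is self-adjoint nilpotent of some index $p$. Here the identity $\langle N^i x,N^j x\rangle=\langle N^{i+j}x,x\rangle$, which vanishes once $i+j\geq p$, shows that for $x$ with $N^{p-1}x\neq0$ the vectors $N^{\lceil p/2\rceil}x,\dots,N^{p-1}x$ span a totally isotropic subspace of dimension $\lfloor p/2\rfloor$; since $V$ has index $1$, this forces $p\leq3$. Moreover the symmetric form $(x,y)\mapsto\langle N^{p-1}x,y\rangle$ cannot vanish identically (else $N^{p-1}=0$), so $x$ can be chosen with $\langle N^{p-1}x,x\rangle\neq0$, which makes $W:=\mathrm{span}(x,Nx,\dots,N^{p-1}x)$ a non-degenerate $S$-invariant subspace of dimension $p$; it splits off orthogonally, its complement in $V_0$ being definite with $S=a_0\,\mathrm{id}$. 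A short computation of the anti-triangular Gram matrix of this single Jordan chain shows it is a non-null eigenline when $p=1$ (joining the diagonal block and supplying the timelike vector of form $I$ if timelike) and a block of index $1$ when $p\in\{2,3\}$; rescaling the Jordan basis to be semi-orthonormal with $\langle v_1,v_2\rangle=-1$ then produces forms $II$ and $III$.

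Finally I would reassemble: placing the (at most one) nontrivial Jordan or rotation block first and the orthonormal diagonalizing vectors afterwards gives exactly the four listed matrices, with precisely the stated metric conventions. I expect the crux to be the dimension bound of the middle paragraph — that an indecomposable Lorentzian block has dimension at most $3$ — which relies entirely on the index being $1$ and must therefore be organized around extracting a large totally isotropic subspace from a long Jordan chain of $N$; once this is done, the Gram-matrix normalizations for $p=2,3$ and the null-basis normalization in case $IV$ are delicate bookkeeping rather than conceptual obstacles.
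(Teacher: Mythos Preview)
The paper does not prove this lemma at all: it is quoted verbatim from the reference \cite{book} and used as a black box to organise the subsequent case analysis of the shape operators of $p$ and $q$. So there is no ``paper's proof'' to compare against; your task was effectively to supply the argument the reference contains.

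Your outline is correct and is the standard route. The orthogonal splitting into generalized eigenspaces, the observation that exactly one summand carries the index, and the reduction of that summand to a single Jordan block (real case) or a single $2\times2$ rotation block (complex case) are all sound. The key step---bounding the nilpotency index by $3$ via the totally isotropic span of the top half of a Jordan chain---is exactly the mechanism that makes the Lorentzian case finite, and you have it right: $\lfloor p/2\rfloor\le 1$ forces $p\le 3$. One small point worth tightening when you write it out in full: after you split off the cyclic subspace $W=\mathrm{span}(x,Nx,\dots,N^{p-1}x)$, you assert that $W^{\perp}\cap V_0$ is definite with $S=a_0\,\mathrm{id}$ there. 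This is true, but the reason is not that $W$ exhausts the nilpotent part by construction; rather, $W$ automatically has index $1$ (a non-degenerate subspace of a Lorentzian space has index $0$ or $1$, and index $0$ would make $N|_W$ diagonalisable hence zero), so $W^{\perp}\cap V_0$ is positive definite, and a self-adjoint nilpotent operator on a definite space is zero. With that remark made explicit the argument is complete.
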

Note that the eigenvalues of types $I, II$ and $III$ are all real, while type $IV$ has $a_0\pm ib_0$ as complex eigenvalues. The next lemma describes how the types of the isoparametric immersions $p$ and $q$ depend on the value of the constant angle $\phi$ locally describing the surface $\Sigma$.
\begin{lemma}\label{lem:Classification of types}
Let $F:\Sigma\rightarrow\SL:(x, y)\mapsto(p(x, y), q(x, y))$ be an immersion of a degenerate almost complex surface, for which the almost product structure $P$ does not leave the tangent bundle invariant. The following table then shows how the types of the Lorentzian isoparamatric immersions $p$ and $q$ depend on the constant angle function $\phi$, locally determining the immersion $F$.
\begin{table}[H]
\centering
\caption{Type classification of the isoparametric immersions $p$ and $q$, with $\psi=\phi+\frac{\pi}{3}$ and $\xi=\phi-\frac{\pi}{3}$.}
\begin{tblr}{ |c||c|c|c|c|c|c|c|c|c|c|c|c|c| } 
\hline
$\phi$ & 0 & - & $\frac{\pi}{3}$ & - & $\frac{2\pi}{3}$ & - & $\pi$ & - & $\frac{4\pi}{3}$ & - & $\frac{5\pi}{3}$& - & $2\pi$\\
$\psi$ & $\frac{\pi}{3}$ & - &$\frac{2\pi}{3}$ & - &$\pi$ & - & $\frac{4\pi}{3}$ & - & $\frac{5\pi}{3}$ & - & 2$\pi$ & - & $\frac{\pi}{3}$\\
$\xi$& $\frac{5\pi}{3}$ & - & 0 & - & $\frac{\pi}{3}$ & - & $\frac{2\pi}{3}$ & - & $\pi$ & - & $\frac{4\pi}{3}$ & - & $\frac{5\pi}{3}$\\
$p$ & $II$ & $I$ & $II$ & $IV$ & $IV$ & $IV$ & $II$& $I$ & $II$ & $IV$ & $IV$ & $IV$ & $II$\\
$q$ & $II$ & $IV$ & $IV$ & $IV$ & $II$ & $I$ & $II$& $IV$ & $IV$ & $IV$ & $II$ & $I$ & $II$\\
\hline
\end{tblr}
\label{table:types}
\end{table}
\end{lemma}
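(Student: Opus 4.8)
The plan is to read off the type of the shape operator of $p$ (and, by the completely parallel argument, of $q$) directly from the explicit matrix of $\nabla^E\eta$ in the coordinate frame $\{dp(X),dp(JX)\}$ computed in Proposition~\ref{prop:Isoparam}. Recall from Equations~\eqref{eq:ShapeP1}--\eqref{eq:ShapeP2} that, with $\psi=\phi+\frac\pi3$, the shape operator $S$ of $p$ with respect to $\eta$ acts on the frame $\{dp(X),dp(JX)\}$ by
\begin{align*}
S=-\begin{pmatrix}\dfrac{-1+4\sin\psi}{2\sqrt3} & \dfrac{\sqrt3+4\cos\psi}{2\sqrt3}\\[2mm] \dfrac{\sqrt3-4\cos\psi}{2\sqrt3} & \dfrac{1+4\sin\psi}{2\sqrt3}\end{pmatrix},
\end{align*}
keeping in mind that $\{dp(X),dp(JX)\}$ is \emph{not} orthonormal but Lorentzian with Gram matrix $\begin{pmatrix}1/2 & -\sqrt3/2\\ -\sqrt3/2 & -1/2\end{pmatrix}$. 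First I would compute the characteristic polynomial of $S$: its trace is $-\frac{4\sin\psi}{\sqrt3}$ (matching the mean curvature already recorded) and its discriminant is, up to the positive factor $\frac1{3}$, exactly $-1-2\cos 2\psi$, so the eigenvalues are the $\lambda_1,\lambda_2$ of Equation~\eqref{eq:eigenvP}. The sign of $-1-2\cos2\psi$ is the first trichotomy: it is positive, zero, or negative according to whether $\cos2\psi<-\tfrac12$, $=-\tfrac12$, or $>-\tfrac12$, i.e. according to whether $\psi$ lies in the open arcs $(\tfrac\pi3,\tfrac{2\pi}3)\cup(\tfrac{4\pi}3,\tfrac{5\pi}3)$, at the endpoints $\psi\in\{\tfrac\pi3,\tfrac{2\pi}3,\tfrac{4\pi}3,\tfrac{5\pi}3\}$, or in the complementary open arcs.

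The next step is to match these three regimes to the four normal forms $I$--$IV$ of the preceding lemma. When the discriminant is negative, $S$ has complex eigenvalues $a_0\pm i b_0$ with $b_0\neq0$, which immediately forces type $IV$; this accounts for every "$-$" column in the $p$-row lying strictly between the special angles $\psi=\tfrac\pi3$ and $\psi=\tfrac{2\pi}3$, and between $\psi=\tfrac{4\pi}3$ and $\psi=\tfrac{5\pi}3$. When the discriminant is positive, $S$ has two distinct real eigenvalues and is therefore diagonalizable over $\R$; the only subtlety is whether the two eigenvectors span a Lorentzian plane (type $I$) or whether one of them is null — but two distinct real eigenvalues of a self-adjoint operator on a $2$-dimensional Lorentzian space always have eigenvectors of opposite causal character (one spacelike, one timelike) spanning the whole plane, so this is type $I$; this is the "$-$" column sitting between $\psi=\tfrac{2\pi}3$ and $\psi=\tfrac{4\pi}3$ (i.e. $\phi$ between $\tfrac\pi3$ and $\pi$) and between $\psi=\tfrac{5\pi}3$ and $\psi=\tfrac{7\pi}3\equiv\tfrac\pi3$. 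The genuinely delicate regime is the boundary $\psi\in\{\tfrac\pi3,\tfrac{2\pi}3,\tfrac{4\pi}3,\tfrac{5\pi}3\}$, where $S$ has a repeated real eigenvalue $a_0=-\tfrac{2\sin\psi}{\sqrt3}$: here one must decide between type $I$ (if $S=a_0\,\mathrm{Id}$) and type $II$ (a non-trivial Jordan block with a null eigenvector). I would resolve this by checking that $S-a_0\,\mathrm{Id}$ is \emph{not} the zero matrix — indeed at $\psi=\tfrac\pi3$ one computes $1+2\cos\psi=2\neq0$ so $p_{uu}=\frac{r^2(1+2\cos\psi)}{\sqrt3}\eta\neq0$ from Equation~\eqref{eq:connpuv}, exhibiting a nonzero off-diagonal entry in a null frame — hence $S$ has a single Jordan block and the type is $II$. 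Plugging the six special values $\phi\in\{0,\tfrac\pi3,\tfrac{2\pi}3,\pi,\tfrac{4\pi}3,\tfrac{5\pi}3\}$ into $\psi=\phi+\tfrac\pi3$ then yields exactly the entries $II,II,IV,II,II,IV$ claimed (with the understanding that $\psi=\tfrac{2\pi}3$ and $\psi=\tfrac{5\pi}3$, being endpoints, still give type $II$, while $\psi=\pi$ falls in the $IV$ arc and $\psi=\tfrac{4\pi}3$ gives type $II$), after a short case check; I would tabulate this explicitly.

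For the $q$-row the argument is identical verbatim after replacing $\eta,\psi$ by $\omega,\xi$ with $\xi=\phi-\tfrac\pi3$ and using the expressions for $\nabla^E\omega$ and $q_{\tilde u\tilde u},q_{\tilde u\tilde v},q_{\tilde v\tilde v}$ from Proposition~\ref{prop:Isoparam} and Lemma~\ref{lem:null coord}; the discriminant of the shape operator of $q$ is $\tfrac13(-1-2\cos2\xi)$, so the trichotomy is governed by $\cos 2\xi$ relative to $-\tfrac12$, and substituting $\xi=\phi-\tfrac\pi3$ for the six special $\phi$ gives $\xi\in\{\tfrac{5\pi}3,0,\tfrac\pi3,\tfrac{2\pi}3,\pi,\tfrac{4\pi}3\}$, producing the second row of the table. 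The main obstacle I anticipate is purely bookkeeping: the distinction $I$ versus $II$ at the transition angles must be handled with a non-orthonormal (null) frame, where "repeated eigenvalue" does not imply "scalar operator," so one has to be careful to verify non-diagonalizability using a frame-independent criterion — e.g. that the second fundamental form $h$, which equals $\langle S\cdot,\cdot\rangle$, is degenerate but nonzero — rather than naively inspecting the matrix; once the convention is pinned down the computation is routine, and the slightly asymmetric placement of $II$ at the two "ends" of each special-angle cluster is exactly what makes the table non-obvious.
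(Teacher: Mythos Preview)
Your approach is essentially the paper's: compute the discriminant $-1-2\cos 2\psi$ of the shape operator of $p$, split into the three sign regimes, and read off the normal form in each. The only real difference is at the boundary $2\cos 2\psi=-1$: the paper writes down the two eigenvectors of $A_\eta$ explicitly and observes that they coalesce there, while you argue $S-a_0\,\mathrm{Id}\neq 0$ via the null-frame component $p_{uu}$. Both are fine.

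There is, however, a bookkeeping slip in your second paragraph that you should fix before writing this up. In the first paragraph you correctly record that the discriminant is \emph{positive} (two real distinct eigenvalues, hence type $I$) on $\psi\in(\tfrac\pi3,\tfrac{2\pi}3)\cup(\tfrac{4\pi}3,\tfrac{5\pi}3)$ and \emph{negative} (complex eigenvalues, type $IV$) on the complementary open arcs. But in the very next paragraph you assign type $IV$ to ``the columns lying strictly between $\psi=\tfrac\pi3$ and $\psi=\tfrac{2\pi}3$'' and type $I$ to ``the column between $\psi=\tfrac{2\pi}3$ and $\psi=\tfrac{4\pi}3$''---exactly backwards, and contradicting the table you are proving (which has $p$ of type $I$ for $\phi\in(0,\tfrac\pi3)$, i.e.\ $\psi\in(\tfrac\pi3,\tfrac{2\pi}3)$, and type $IV$ for $\phi\in(\tfrac\pi3,\tfrac{2\pi}3)$). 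Once you unswap these two arc assignments the argument goes through and matches the paper's.
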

\begin{proof}
\ref{Main theorem 2} states that there exists a constant angle function $\phi$ that locally describes the immersion $F:\Sigma\rightarrow\SL$. Once again we will first consider the immersion $p:\Sigma\rightarrow\Sl$. The proof of Proposition \ref{prop:Isoparam} shows that $p$ is an isoparametric Lorentzian surface and that the eigenvalues of its shape-operator, given by Equation (\ref{eq:eigenvP}), depend on the value of the angle $\psi=\phi+\frac{\pi}{3}$. One can then distinguish three different cases: the eigenvalues are distinct and real if $2\cos2\psi<-1$, the eigenvalues are equal and real if $2\cos2\psi=-1$ and the eigenvalues are complex if $2\cos2\psi>-1$.

The immersion is automatically of type $IV$ if the eigenvalues are complex, thus if $2\cos2\psi>-1$. The shape operator of the immersion with respect to the frame $\left\{dp(X), dp(JX) \right\}$  and normal vector field $\eta$ is given in Equations (\ref{eq:ShapeP1})-(\ref{eq:ShapeP2}) and can be written in matrix form as 
\begin{align*}
A_\eta&=\begin{pmatrix}
\frac{-1+4\sin\psi}{2\sqrt{3}} & \frac{\sqrt{3}-4\cos\psi}{2\sqrt{3}}\\
\frac{\sqrt{3}+4\cos\psi}{2\sqrt{3}}&\frac{1+4\sin\psi}{2\sqrt{3}}
\end{pmatrix}.
\end{align*}
If the eigenvalues are distinct and real, then the shape-operator is trivially diagonalizable, thus the immersion is of type $I$ if $2\cos2\psi<-1$. The eigenvectors of $A_\eta$ are given by
\begin{align*}
v_1&=-\sqrt{3}(1+2\sqrt{-1-2\cos2\phi})dp(X)+(3+4\sqrt{3}\cos\phi)dp(JX),\\v_2&=\sqrt{3}(-1+2\sqrt{-1-2\cos2\phi})dp(X)+(3+4\sqrt{3}\cos\phi)dp(JX).
\end{align*}
The shape-operator is not diagonalizable if the eigenvectors are identical, thus when $2\cos2\psi=-1$. As the immersion is a surface, it then has to be of type $II$. The immersion $p$ is thus of type $I$ if $2\cos2\psi<-1$, of type $II$ if $2\cos2\psi=-1$ and of type $IV$ if $2\cos2\psi>-1$.

Analogously, the immersion $q$ is of type $I$ if $2\cos2\xi<-1$, of type $II$ if $2\cos2\xi=-1$ and of type $IV$ if $2\cos2\xi>-1$, where the angle $\xi$ is defined by $\xi=\phi-\frac{\pi}{3}$ as shown in the proof of Proposition \ref{prop:Isoparam}. Table \ref{table:types} then exactly shows how the types of the immersions $p$ and $q$ depend on the constant angle function $\phi$.
\end{proof}
We will now present classifications for flat isoparametric surfaces in $\Sl$, using the correspondences between the anti-de Sitter space $H^3_1$ and $\Sl$, as shown in Section \ref{section:Correspondence}.
\subsubsection*{\textbf{Isoparametric surfaces of type} $\mathbf{I}$}
The following theorem shows a classification of all flat, isoparametric Lorentzian surfaces of type $I$ in $H^3_1$, with distinct eigenvalues \cite{xiao}. 
\begin{theorem}[\cite{xiao}]\label{theo:typeI}
Let $M$ be a flat, isoparametric Lorentzian surface of type $I$ in $H^3_1$, with distinct eigenvalues. Then $M$ is of type $I$ if and only if $M$ is congruent to an open part of one of the following surfaces:
\begin{itemize}
\item[•]$H^1_1(\sqrt{1-\lambda^2})\times S^1\left(\sqrt{\frac{1-\lambda^2}{\lambda^2}}\right)$, where $-1<\lambda<1$,
\item[•] $S^1_1(\sqrt{\lambda^2-1})\times H^1\left(\sqrt{\frac{\lambda^2-1}{\lambda^2}}\right)$, where $\lambda$ is real and $\lambda^2>1$.
\end{itemize}
\end{theorem}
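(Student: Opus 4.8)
The plan is to reduce the classification of flat isoparametric Lorentzian surfaces of type $I$ with distinct principal curvatures in $H^3_1$ to an ODE problem coming from the Gauss and Codazzi equations, and then integrate. First I would observe that, since the shape operator is of type $I$ (diagonalizable) with two distinct real principal curvatures $\lambda_1 \neq \lambda_2$, one can choose a frame $\{e_1, e_2\}$ of principal directions; because the surface is isoparametric these are \emph{constant}, and because the surface is flat the Gauss equation gives $c + \lambda_1 \lambda_2 = 0$ with $c = -1$, hence $\lambda_1 \lambda_2 = 1$. Writing $\lambda_1 = \lambda$ this forces $\lambda_2 = 1/\lambda$, and one checks that exactly one of $e_1, e_2$ is timelike (the surface is Lorentzian), so the two bulleted cases will come from whether $\lambda^2 < 1$ or $\lambda^2 > 1$, i.e. whether it is the spacelike or the timelike principal direction that carries the ``smaller'' curvature.

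The key steps, in order, would be: (1) Set up the structure equations for a Lorentzian surface $M \subset H^3_1 \subset \mathbb{R}^4_2$ in a principal frame, using the Codazzi equation to show that the connection coefficients $\langle \nabla_{e_i} e_j, e_k\rangle$ vanish (this is where constancy of the $\lambda_i$ together with $\lambda_1 \neq \lambda_2$ is used — Codazzi reads $e_i(\lambda_j) = (\lambda_i - \lambda_j)\,\omega_{\text{something}}$, and $e_i(\lambda_j) = 0$ forces the off-diagonal connection form to vanish). Hence $M$ is (locally) a Riemannian/semi-Riemannian product, with each $e_i$-integral curve being a curve of constant geodesic curvature in a totally geodesic or umbilic surface of $H^3_1$. (2) Identify those integral curves explicitly: in the flat product structure $M = M_1 \times M_2$, each factor $M_i$ is a curve in a $2$-plane section and is a circle, a hyperbola, or a line in $\mathbb{R}^4_2$ depending on the causal character and the value of $\lambda_i$; the constraint $\langle x, x\rangle = -1$ in $\mathbb{R}^4_2$ together with $\lambda_1 \lambda_2 = 1$ pins the radii. (3) Match the radii to $\lambda$: a direct computation of the second fundamental form of a product $H^1_1(a) \times S^1(b) \hookrightarrow \mathbb{R}^4_2$ lying in $H^3_1$ shows the principal curvatures are $\pm 1/a$ along the $H^1_1$ factor and $\pm 1/b$ along the $S^1$ factor (up to sign conventions), and imposing flatness $a^{-1} b^{-1} = 1$ plus the sphere constraint $-a^2 + (\text{radius}^2) = \dots$ yields $a = \sqrt{1-\lambda^2}$, $b = \sqrt{(1-\lambda^2)/\lambda^2}$ in the first case, and symmetrically the $S^1_1 \times H^1$ description with $a^2 = \lambda^2 - 1$ in the second. (4) Conversely, verify that both listed surfaces are flat, isoparametric, Lorentzian, of type $I$, with the stated eigenvalues — this direction is a routine verification that I would only sketch.

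The main obstacle I expect is Step (1)–(2): showing that type $I$ plus isoparametric plus flat actually forces the \emph{local product} structure. In the Lorentzian setting one must be careful that a ``type $I$'' shape operator is genuinely diagonalizable over $\mathbb{R}$ with an orthonormal (one timelike, one spacelike) eigenbasis — this is not automatic for self-adjoint operators in indefinite signature (that is precisely why the four Petrov-type normal forms in the preceding lemma exist), so one has to use the hypothesis ``of type $I$ with distinct eigenvalues'' rather than deriving it. Once diagonalizability and constancy give vanishing connection coefficients, the de Rham-type splitting and the explicit integration of the two curve factors are comparatively mechanical; the bookkeeping of signs and of which factor is timelike, and correctly translating everything through the isometry $\mathfrak{f}_1$ or $\mathfrak{f}_2$ of Section \ref{section:Correspondence} back into $\Sl$, is the only remaining delicate point. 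I would also remark that since this is quoted as a theorem from \cite{xiao}, in the paper itself the ``proof'' is likely just a citation plus a translation into the $\Sl$-picture, so the real content to be supplied here is the dictionary between the $H^3_1$ statement and the ambient $\mathbb{R}^4_2$ coordinates used elsewhere in the paper.
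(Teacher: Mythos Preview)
Your anticipation in the final paragraph is exactly right: the paper does not prove this theorem at all. It is stated with the attribution \cite{xiao} and used as a black box; immediately afterwards the paper only supplies explicit parametrizations (Examples \ref{example:Ia} and \ref{example:Ib}) and verifies by direct computation the mean curvature, a null frame, and the normalization of the second fundamental form needed later. So there is nothing in the paper to compare your argument against beyond the citation.

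That said, your sketch is the standard and correct route to the result in \cite{xiao}: diagonalize $A_\eta$ in an orthonormal principal frame (this is exactly the content of the ``type $I$'' hypothesis), use Gauss to get $\lambda_1\lambda_2=1$, use Codazzi with constant $\lambda_i$ and $\lambda_1\neq\lambda_2$ to kill the connection $1$-form, obtain a local de Rham splitting into two curves, and identify each factor as a circle/hyperbola in a $2$-plane of $\mathbb{R}^4_2$ with the radii fixed by the constraint $\langle x,x\rangle=-1$. One small correction of emphasis: in your Step (3) you phrase the radius constraint as ``$-a^2+(\text{radius}^2)=\dots$'', but in practice the cleanest way is to note that the position vector of $H^1_1(r_1)\times S^1(r_2)\subset\mathbb{R}^4_2$ has squared length $-r_1^2+r_2^2$ (respectively $r_1^2-r_2^2$ for $S^1_1\times H^1$), and setting this equal to $-1$ together with $\lambda_1\lambda_2=1$ gives the two bulleted families directly. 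The case split $\lambda^2<1$ versus $\lambda^2>1$ then records which principal direction is timelike, exactly as you say.
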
 
The following examples then give explicit parametrizations of these surfaces. 
\begin{example}[Type $I$]\label{example:Ia}
If $M$ is congruent to an op part of the surface $H^1_1(\sqrt{1-\lambda^2})\times S^1\left(\sqrt{\frac{1-\lambda^2}{\lambda^2}}\right)$, with $-1<\lambda<1$, then it is congruent to an open part of the immersion
\begin{align*}
f:\mathbb{R}^2\rightarrow H^3_1:(s,t)\mapsto \left(\frac{1}{\sqrt{1-\lambda^2}}\cos(s),  \frac{1}{\sqrt{1-\lambda^2}}\sin(s), \sqrt{\frac{\lambda^2}{1-\lambda^2}}\cos(t), \sqrt{\frac{\lambda^2}{1-\lambda^2}}\sin(t)\right).
\end{align*}
We will now compute the mean curvature vector field of this immersion, while also constructing a null frame of the surface, which will be used to prove Lemma \ref{lem:TypeI}. The tangent space at every point of the immersion is spanned by the vector fields
\begin{align*}
\partial_s&=\left(-\frac{1}{\sqrt{1-\lambda^2}}\sin(s),  \frac{1}{\sqrt{1-\lambda^2}}\cos(s), 0, 0\right),\quad{}
\partial_t=\left(0,  0, -\sqrt{\frac{\lambda^2}{1-\lambda^2}}\sin(t), \sqrt{\frac{\lambda^2}{1-\lambda^2}}\cos(t)\right).
\end{align*}
The second derivatives of the immersion with respect to the coordinates $\left\{s,t \right\}$ are then given by
\begin{align*}
f_{ss}&=D_{\partial_s}\partial_s=\left(-\frac{1}{\sqrt{1-\lambda^2}}\cos(s),  -\frac{1}{\sqrt{1-\lambda^2}}\sin(s), 0, 0\right),\\
f_{tt}&=D_{\partial_t}\partial_t=\left(0,  0, -\sqrt{\frac{\lambda^2}{1-\lambda^2}}\cos(t), -\sqrt{\frac{\lambda^2}{1-\lambda^2}}\sin(t)\right),\\
f_{st}&=D_{\partial_s}\partial_t=0.
\end{align*}
It is now convenient to introduce an orthonormal frame  $\left\{e_1, e_2\right\}$ on this surface, with  $\left\langle e_1, e_1\right\rangle=-1, \left\langle e_2, e_2\right\rangle =1, \left\langle e_1, e_2\right\rangle=0$, which can be constructed as
\begin{align*}
e_1&=\sqrt{1-\lambda^2}\partial_s,\quad{}
e_2=\sqrt{\frac{1-\lambda^2}{\lambda^2}}\partial_t.
\end{align*}
The second fundamental form $h$, with respect to this orthonormal frame, can be calculated using Equation (\ref{eq:connH31}) and yields
\begin{align*}
h(e_1, e_1)&=D_{e_1}e_1+\left\langle D_{e_1}e_1, e_1\right\rangle e_1-\left\langle D_{e_1}e_1, e_2\right\rangle e_2+\left\langle D_{e_1}e_1, f\right\rangle f,\\
h(e_2, e_1)&=D_{e_2}e_2+\left\langle D_{e_2}e_2, e_1\right\rangle e_1-\left\langle D_{e_2}e_2, e_2\right\rangle e_2+\left\langle D_{e_2}e_2, f\right\rangle f,\\
h(e_1, e_2)&=0,
\end{align*}
where
\begin{align*}
D_{e_1}e_1&=(1-\lambda^2)f_{ss},\quad{}
D_{e_2}e_2=\frac{1-\lambda^2}{\lambda^2}f_{tt},\quad{}
D_{e_1}e_2=0.
\end{align*}From the expressions of the tangent vector fields $\partial_s$ and $\partial_t$, one can then compute the length of the mean curvature vector field $H$, which is given by
\begin{align*}
\left\langle H,H\right\rangle &=\frac{(1+\lambda^2)^2}{4\lambda^2}.
\end{align*}
In the second part of this example, we will construct a null frame $\left\{\partial_u, \partial_v\right\}$ on the surface as
\begin{align*}
\partial_u&=-r e_1+re_2,\quad{} 
\partial_v=\frac{1}{2r}e_1+\frac{1}{2r}e_2,
\end{align*}
where $r$ is a non-zero function on the immersion and $\left\langle \partial_u, \partial_u\right\rangle=0$, $\left\langle \partial_v, \partial_v\right\rangle=0$ and $\left\langle\partial_u, \partial_v\right\rangle=1$. The second fundamental form $h$ with respect to this null frame is now given by
\begin{align*}
h(\partial_u, \partial_u)&=r^2h(e_1, e_1)+r^2h(e_2, e_2),\quad{}
h(\partial_v, \partial_v)=\frac{h(e_1, e_1)+h(e_2, e_2)}{4r^2},\quad{}h(\partial_u, \partial_v)=\frac{-h(e_1, e_1)+h(e_2, e_2)}{2}.
\end{align*}
Notice that it is possible to fix the value of $r$ and thereby fully determine the frame $\left\{\partial_u, \partial_v\right\}$ by normalizing $h(\partial_u, \partial_u)$, that is, $\left\langle h(\partial_u, \partial_u), h(\partial_u, \partial_u)\right\rangle=1$. This condition yields the following requirement:
\begin{align*}
(1-\lambda^2)^2r^4=\lambda^2.
\end{align*}
The null frame $\left\{\partial_u, \partial_v \right\}$ also induces null coordinates $\left\{u,v\right\}$ because
\begin{align*}
\partial_u&=-r\sqrt{1-\lambda^2}\partial_s+r\sqrt{\frac{1-\lambda^2}{\lambda^2}}\partial_t,\quad{}
\partial_v=\frac{1}{2r}\sqrt{1-\lambda^2}\partial_s+\frac{1}{2r}\sqrt{\frac{1-\lambda^2}{\lambda^2}}\partial_t,
\end{align*}
which means that one can write the initial coordinates $\left\{s,t \right\}$ in terms of the null coordinates $\left\{u,v \right\}$ by
\begin{align*}
s&=-r\sqrt{1-\lambda^2}u+\frac{1}{2r}\sqrt{1-\lambda^2}v,\quad{}
t=r\sqrt{\frac{1-\lambda^2}{\lambda^2}}u+\frac{1}{2r}\sqrt{\frac{1-\lambda^2}{\lambda^2}}v.
\end{align*}
We conclude this example by considering the isometry $\mathfrak{f}_1$ between $(H^3_1,\left\langle \, ,\,\right\rangle)$ and $(\Sl, \left\langle\, , \,\right\rangle)$, as shown in Equation (\ref{eq:corr1}). This then yields the corresponding immersion in $\Sl$ as
\begin{align*}
f:\mathbb{R}^2\rightarrow\Sl: (s, t)\mapsto\sqrt{\frac{1}{1-\lambda^2}}\begin{pmatrix}
\cos(s)-\sqrt{\lambda^2}\cos(t) & -\sin(s)+\sqrt{\lambda^2}\sin(t) \\
 \sin(s)+\sqrt{\lambda^2}\sin(t)  & \cos(s)-\sqrt{\lambda^2}\cos(t)
\end{pmatrix}.
\end{align*}
\end{example}
\begin{example}[Type $I$]\label{example:Ib}
If $M$ is congruent to an op part of the surface  $S^1_1(\sqrt{\lambda^2-1})\times H^1\left(\sqrt{\frac{\lambda^2-1}{\lambda^2}}\right)$, where $\lambda$ is real and $\lambda^2>1$, then it is congruent to an open part of the immersion
\begin{align*}
f:\mathbb{R}^2\rightarrow H^3_1:(s,t)\mapsto \left(\frac{1}{\sqrt{\lambda^2-1}}\sinh(s), \sqrt{\frac{\lambda^2}{\lambda^2-1}}\cosh(t),\frac{1}{\sqrt{\lambda^2-1}}\cosh(s),\sqrt{\frac{\lambda^2}{\lambda^2-1}}\sinh(t) \right).
\end{align*}
We will now, as in Example \ref{example:Ia}, compute the mean curvature vector field of this immersion and construct a null frame. As this follows a completely analogous procedure, a couple of details will be omitted in this example. The tangent space at every point of the immersion is spanned by the vector fields
\begin{align*}
\partial_s&=\left(\frac{1}{\sqrt{\lambda^2-1}}\cosh(s), 0,\frac{1}{\sqrt{\lambda^2-1}}\sinh(s),0 \right),\quad{}
\partial_t=\left(0, \sqrt{\frac{\lambda^2}{\lambda^2-1}}\sinh(t),0,\sqrt{\frac{\lambda^2}{\lambda^2-1}}\cosh(t) \right).
\end{align*}
One can then construct an orthonormal frame $\left\{e_1, e_2 \right\}$ with $\left\langle e_1, e_1\right\rangle=-1, \left\langle e_2, e_2\right\rangle=1$ and $\left\langle e_1, e_2\right\rangle =0$ by 
\begin{align*}
e_1&=\sqrt{\lambda^2-1}\partial_s,\quad{} e_2=\sqrt{\frac{\lambda^2-1}{\lambda^2}}\partial_t.
\end{align*}
Through a completely analogous argument as in Example \ref{example:Ia}, we find that the length of the mean curvature vector field is given by
\begin{align*}
\left\langle H,H\right\rangle &=\frac{(1+\lambda^2)^2}{4\lambda^2}.
\end{align*}
Once again, one can construct a null frame $\left\{u,v \right\}$ through
\begin{align*}
\partial_u&=-be_1+be_2\quad{}\partial_v=\frac{1}{2r}e_1+\frac{1}{2r}e_2,
\end{align*}
where $r$ is a non-zero function on the immersion. The value of $r$ can be fixed by normalizing $h(\partial_u, \partial_u)$, which yields the condition
\begin{align*}
(1-\lambda^2)^2r^4=\lambda^2.
\end{align*}
Note that the both the length of the mean curvature vector field as the normalization condition for the function $r$ are exactly the same as in Example \ref{example:Ia}. The original coordinates $\left\{s,t\right\}$ can also be related to the null coordinates $\left\{u,v \right\}$ through
\begin{align*}
s&=-r\sqrt{\lambda^2-1}u+\frac{1}{2r}\sqrt{\lambda^2-1}v,\quad{}
t=r\sqrt{\frac{\lambda^2-1}{\lambda^2}}u+\frac{1}{2r}\sqrt{\frac{\lambda^2-1}{\lambda^2}}v.
\end{align*}
The isometry $\mathfrak{f}_1$ between $(H^3_1,\left\langle\, ,\,\right\rangle)$ and $(\Sl, \left\langle\, , \,\right\rangle)$, as shown in Equation (\ref{eq:corr1}), then yields the corresponding immersion in $\Sl$ as
\begin{align*}
f:\mathbb{R}^2\rightarrow\Sl:(s,t)\mapsto\sqrt{\frac{1}{\lambda^2-1}}\begin{pmatrix}
-e^{-s} & -\sqrt{\lambda^2}e^{-t}\\
\sqrt{\lambda^2}e^t & e^s
\end{pmatrix}.
\end{align*}
\end{example}
These examples then allow for the following lemma, which shows a congruence result for when the immersions $p$ and $q$, defined as $F:\Sigma\rightarrow\SL:(x, y)\mapsto(p(x, y), q(x,y))$, are of type $I$.
\begin{lemma}\label{lem:TypeI}
Let $F:\Sigma\rightarrow\SL:(x, y)\mapsto(p(x, y), q(x, y))$ be an immersion of a degenerate almost complex surface, for which the almost product structure $P$ does not preserve the tangent bundle and let $\phi$ be the constant angle function locally determining $F$. If the immersion $p$ is of type $I$, then it is congruent to an open part of either the immersion given in Example \ref{example:Ia} or the immersion given in Example \ref{example:Ib}, with additional congruency condition
\begin{align*}
3(1+\lambda^2)^2&=\lambda^2\sin^2\psi,
\end{align*}
where $\psi=\phi+\frac{\pi}{3}$. The value of  $\lambda$ determines which immersion is congruent to $p$, as $\lambda^2<1$ yields the immersion in Example \ref{example:Ia}, while $\lambda^2>1$ yields the immersion in Example \ref{example:Ib}. A completely analogous statement holds for the immersion $q$ is of type $I$, if one replaces the constant angle function $\psi$ by the constant angle $\xi=\phi-\frac{\pi}{3}$.
\end{lemma}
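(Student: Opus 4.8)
The plan is to reduce the statement to the classification of flat isoparametric Lorentzian surfaces in anti-de Sitter space that is already available (Theorem~\ref{theo:typeI}), and then to single out the correct member of that family by matching one curvature invariant. First I would transport the immersion $p$ through the local isometry $\mathfrak{f}_1$ of Equation~\eqref{eq:corr1}: since $\mathfrak{f}_1$ is an isometry, $\mathfrak{f}_1^{-1}\circ p$ is again a flat Lorentzian isoparametric surface in $H^3_1$, and by hypothesis it is of type~$I$, which by Lemma~\ref{lem:Classification of types} forces $2\cos 2\psi<-1$, so its two principal curvatures are real and distinct. Theorem~\ref{theo:typeI} then applies directly and yields that $\mathfrak{f}_1^{-1}\circ p$ is congruent, in $H^3_1$, to an open part of $H^1_1(\sqrt{1-\lambda^2})\times S^1(\sqrt{(1-\lambda^2)/\lambda^2})$ for some $\lambda$ with $\lambda^2<1$, or of $S^1_1(\sqrt{\lambda^2-1})\times H^1(\sqrt{(\lambda^2-1)/\lambda^2})$ for some $\lambda$ with $\lambda^2>1$; pushing the explicit parametrizations of Examples~\ref{example:Ia} and~\ref{example:Ib} forward through $\mathfrak{f}_1$ gives exactly the two candidate immersions into $\Sl$ appearing in the statement, and the sign of $\lambda^2-1$ records which one occurs.

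It then remains to pin down the parameter $\lambda$ in terms of the constant angle $\phi$. The key observation is that $p$ is flat, so by the Gauss equation used in the proof of Proposition~\ref{prop:Isoparam} its principal curvatures satisfy $\lambda_1\lambda_2=-c=1$; hence the unordered pair $\{\lambda_1,\lambda_2\}$, and thus $\langle H,H\rangle$, is the only free invariant. I would therefore compute $\langle H,H\rangle$ in two ways. On one side, Proposition~\ref{prop:Isoparam} gives $H=\tfrac{2\sin\psi}{\sqrt{3}}\,\eta$ with $\eta$ the spacelike unit normal of Equation~\eqref{def:normaleta} and $\psi=\phi+\tfrac{\pi}{3}$, so $\langle H,H\rangle$ is an explicit function of $\psi$. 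On the other side, the computations carried out in Examples~\ref{example:Ia} and~\ref{example:Ib} give $\langle H,H\rangle=\tfrac{(1+\lambda^2)^2}{4\lambda^2}$ for the model surfaces, and this number is preserved both by the congruence and by $\mathfrak{f}_1$. Equating the two expressions produces the stated congruency condition relating $\lambda$ and $\psi$; this relation is quadratic in $\lambda^2$ with product of roots equal to $1$, so (when it has real solutions, which happens exactly in the type~$I$ regime $2\cos2\psi<-1$) one root lies in $(0,1)$ and the other in $(1,\infty)$, which is precisely why the value of $\lambda$ dictates whether Example~\ref{example:Ia} or Example~\ref{example:Ib} is the relevant model. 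The assertion for $q$ would be obtained by repeating this argument verbatim: Proposition~\ref{prop:Isoparam} gives $H=\tfrac{2\sin\xi}{\sqrt{3}}\,\omega$ with $\xi=\phi-\tfrac{\pi}{3}$ for the second factor, and matching against the same model invariant produces the same relation with $\xi$ in place of $\psi$.

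The substantial computations behind this argument — the second fundamental form of $\Sigma$ expressed through the nearly Kähler connection, and the second fundamental forms of the two model surfaces — have already been carried out in Proposition~\ref{prop:Isoparam} and in Examples~\ref{example:Ia} and~\ref{example:Ib}, so the remaining work is essentially bookkeeping, and that is where I expect the main obstacle to lie: one must verify that the normalizations used in the two independent computations are mutually consistent — the orientation and unit length of the normals $\eta$ and $\omega$, whether the mean curvature is taken as the summed or the averaged trace of the shape operator, and the normalizing choice of the constant $r$ fixed in Lemma~\ref{lem:null coord} — so that equating $\langle H,H\rangle$ yields precisely the claimed algebraic identity and not merely a rescaled version of it. One should also record that the boundary value of $\psi$ with $2\cos 2\psi=-1$ is automatically excluded, since there $p$ would be of type~$II$ rather than type~$I$ and would fall outside the scope of Theorem~\ref{theo:typeI}.
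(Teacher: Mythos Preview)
Your proposal is correct and follows essentially the same route as the paper: transport $p$ to $H^3_1$ via the local isometry $\mathfrak{f}_1$, invoke the flat type~$I$ classification of Theorem~\ref{theo:typeI} to land in Example~\ref{example:Ia} or~\ref{example:Ib}, and then fix the parameter $\lambda$ by equating the length of the mean curvature vector computed in Proposition~\ref{prop:Isoparam} with that of the model surfaces. Your added remarks about the quadratic in $\lambda^2$ and about checking the trace/normalisation conventions are not in the paper's proof but are consistent with it and, as you suspected, are exactly where one must be careful to recover the precise numerical form of the congruency condition.
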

\begin{proof}
We will proof the statement for the immersion $p:\Sigma\rightarrow\Sl$, as the proof for the immersion $q$ is completely analogous. \ref{Main theorem 2} shows that the immersion $F$ is locally determined by a constant angle function $\phi$, as it is a degenerate almost complex surface, for which the almost product structure $P$ does not preserve the tangent bundle. Proposition \ref{prop:Isoparam} then shows that the immersions $p$ is a Lorentzian isoparametric surface in $\Sl$, and the length of its mean curvature vector field $H$ is given by 
\begin{align*}
\left\langle H, H\right\rangle=\frac{4\sin^2\psi}{3},
\end{align*}
where the angle $\psi$ is given by $\psi=\phi+\frac{\pi}{3}$. Lemma \ref{lem:Classification of types} shows that if the immersion $p$ is of type $I$, then it must have distinct eigenvalues, while applying Theorem \ref{theo:typeI} yields that Lorentzian isoparametric surfaces with distinct eigenvalues in the anti-de Sitter space $H^3_1$ are congruent to either the surface in Example \ref{example:Ia} or to the surface in Example \ref{example:Ib}. After applying the correspondence between $H^3_1$ and $\Sl$, one can see that the immersion $p$ is congruent to either one of the aforementioned examples, if the lengths of the mean curvature vector fields coincide, thus if
\begin{align}\label{eq:congI}
3(1+\lambda^2)^2&=\lambda^2\sin^2\psi,
\end{align}
which is the same condition for both examples. Recall that the parameter $\lambda$ of the surfaces satisfies $\lambda^2<1$ if it is congruent to Example \ref{example:Ia} and if $\lambda^2<1$ then it is congruent to Example \ref{example:Ib}. Note that Lemma \ref{lem:Classification of types} ensures that Equation (\ref{eq:congI}) never has $\lambda$ equal to 1 as a solution, and that the solution for $\lambda$ will then determine the congruent surface of the immersion $p$, thereby proving the lemma. One immediately obtains the same result for the immersion $q$, when one substitutes the angle $\psi$ by the constant angle function $\xi$, defined as $\xi=\phi-\frac{\pi}{3}$.
\end{proof}
\subsubsection*{\textbf{Isoparametric surfaces of type} $\mathbf{II}$}
The following lemma shows a congruence result for when the immersions $p$ and $q$, defined as $F:\Sigma\rightarrow\SL:F(x, y)\rightarrow(p(x, y), q(x, y))$, are of type $II$. Remark that the classification includes equivalent surfaces, that can be obtained by interchanging the coordinates $u$ and $v$. However, for the sake of clarity and to highlight their significance in the classification of degenerate almost complex surfaces in Section \ref{section:classifyingresult}, we will present them in their original form.
\begin{lemma}\label{lem:TypeII}
Let $F:\Sigma\rightarrow\SL:(x, y)\mapsto(p(x, y), q(x, y))$ be an immersion of a degenerate almost complex surface, for which the almost product structure $P$ does not preserve the tangent bundle and let $\phi$ be the constant angle function locally determining $F$.
If the immersion $p$ is of type $II$, then the value of $\psi=\phi+\frac{\pi}{3}$ lies in the set $\left\{\frac{\pi}{3},\frac{2\pi}{3}, \frac{4\pi}{3}, \frac{5\pi}{3}\right\}$. The immersion is then congruent to an open part of the immersion $f:\mathbb{R}^2\rightarrow\Sl$, where the expression of $f$ depends on the value of $\psi$:
\begin{align*}
&\text{type $II_a$: if }\psi=\frac{\pi}{3} \text{ then } f:\mathbb{R}^2\rightarrow \Sl:(u,v)\mapsto \begin{pmatrix}
\frac{(1-u+2v)}{2}e^u & -\frac{(1+u-2v)}{2}e^{-u} \\
e^u & e^{-u}
\end{pmatrix},\\
&\text{type $II_b$: if }\psi=\frac{2\pi}{3} \text{ then }f:\mathbb{R}^2\rightarrow \Sl:(u,v)\mapsto \begin{pmatrix}
\frac{(1-v+2u)}{2}e^v & -\frac{(1+v-2u)}{2}e^{-v} \\
e^v & e^{-v}
\end{pmatrix},\\
&\text{type $II_c$: if }\psi=\frac{4\pi}{3} \text{ then }f:\mathbb{R}^2\rightarrow \Sl:(u,v)\mapsto \begin{pmatrix}
\frac{-\sin(v)+(v+2u)\cos(v)}{2} & \frac{\cos(v)+(v+2u)\sin(v)}{2}\\
-2\cos(v) & -2\sin(v)
\end{pmatrix},\\
&\text{type $II_d$: if }\psi=\frac{5\pi}{3} \text{ then }f:\mathbb{R}^2\rightarrow \Sl:(u,v)\mapsto\begin{pmatrix}
\frac{-\sin(u)+(u+2v)\cos(u)}{2} & \frac{\cos(u)+(u+2v)\sin(u)}{2}\\
-2\cos(u) & -2\sin(u)
\end{pmatrix}.
\end{align*}
A completely analogous statement holds for the immersion $q$, when one replaces the angle function $\psi$ by $\xi=\phi-\frac{\pi}{3}$.
\end{lemma}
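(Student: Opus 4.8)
The plan is to split the proof in two: first I would determine the admissible values of $\psi$, and then, for each of them, integrate an explicit constant-coefficient linear system to recover the immersion.

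For the first part, recall from the proof of Proposition~\ref{prop:Isoparam} together with Lemma~\ref{lem:Classification of types} that $p$ is of type $II$ precisely when the shape operator $A_\eta$ has a repeated real eigenvalue but fails to be diagonalizable; by the eigenvalue formula (\ref{eq:eigenvP}) this is equivalent to $2\cos 2\psi=-1$, whose solutions on $[0,2\pi)$ are $\psi\in\left\{\frac{\pi}{3},\frac{2\pi}{3},\frac{4\pi}{3},\frac{5\pi}{3}\right\}$. This establishes the first assertion. Note that for each of these four values exactly one of the scalars $1+2\cos\psi$ and $1-2\cos\psi$ occurring in (\ref{eq:connpuv}) vanishes, so that either $p_{uu}=0$ or $p_{vv}=0$; this records that $p$ carries a unique null direction along which its second fundamental form in $\Sl$ vanishes, and it is this direction that is responsible for the split between the exponential forms $II_a,II_b$ and the trigonometric forms $II_c,II_d$.

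For the second part, fix one admissible $\psi$ and use the null coordinates $\{u,v\}$ on $p$ from Lemma~\ref{lem:null coord}, normalized as in its proof. Along the immersion $p:\Sigma\to\Sl\subset\mathbb{R}^4_2$ the ordered frame $\{p,\partial_u,\partial_v,\eta\}$ is a basis of $\mathbb{R}^4_2$: here $\eta$ is the unit normal of $p$ in $\Sl$ from (\ref{def:normaleta}), while the position vector $p$ is normal to $\Sl$ in $\mathbb{R}^4_2$. The second derivatives $p_{uu},p_{uv},p_{vv}$ are given by (\ref{eq:connpuv}); the derivatives $D_{\partial_u}\eta$ and $D_{\partial_v}\eta$ follow from the shape operator (\ref{eq:ShapeP1})--(\ref{eq:ShapeP2}) rewritten in the null frame, combined with (\ref{eq:connH31}); and $D_{\partial_u}p=\partial_u$, $D_{\partial_v}p=\partial_v$. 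Since $p$ is flat and isoparametric and the coordinates have been normalized, all the structure functions are constant, so the frame satisfies a first-order linear system with constant coefficients. I would then integrate it explicitly: the coefficient matrix is, up to a scalar block, nilpotent, and its exponential produces for $\psi\in\{\frac{\pi}{3},\frac{2\pi}{3}\}$ terms of the form (affine polynomial in $u,v$) times $e^{\pm u}$ or $e^{\pm v}$, and for $\psi\in\{\frac{4\pi}{3},\frac{5\pi}{3}\}$ terms of the form (affine polynomial) times $\cos$ or $\sin$. Imposing the constraint $\langle p,p\rangle=-1$, so that $p$ takes values in $\Sl$, and using the isometries $\mathcal{F}_{ABC}$ of Lemma~\ref{lem:isom} to fix the constants of integration then yields exactly the four immersions $II_a$--$II_d$ of the statement. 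The argument for $q$ is identical after replacing $\psi$ by $\xi=\phi-\frac{\pi}{3}$, $\eta$ by the normal $\omega$ of (\ref{def:normalomega}), and (\ref{eq:connpuv}), (\ref{eq:ShapeP1})--(\ref{eq:ShapeP2}) by their $q$-counterparts established in the proofs of Proposition~\ref{prop:Isoparam} and Lemma~\ref{lem:null coord}.

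I expect the main obstacle to be the bookkeeping of this explicit integration, and in particular verifying that the normalization of the function $r$ --- which is fixed through $p_{uu}$, or through $p_{vv}$ in the degenerate case $1+2\cos\psi=0$ --- together with the available ambient isometries pins down every remaining constant of integration, so that the resulting immersion is congruent to precisely the listed normal form rather than to some affine reparametrization of it. The remainder is a routine, if lengthy, matrix-exponential computation.
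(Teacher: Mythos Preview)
Your proposal is correct in outline and would lead to the same classification, but it proceeds differently from the paper. The paper does not set up the full moving frame $\{p,\partial_u,\partial_v,\eta\}$ and compute a matrix exponential; instead it immediately exploits the vanishing of $p_{vv}$ (in the cases $\psi\in\{\tfrac{\pi}{3},\tfrac{5\pi}{3}\}$, the others being obtained by $u\leftrightarrow v$) to write $p(u,v)=a(u)+v\,b(u)$, passes to $H^3_1$ via the isometry $\mathfrak{f}_2$, and then reads off from the null-frame relations that $\{a,a',b,b'\}$ is a frame on $\mathbb{R}^4_2$ with $b''=\alpha\,b$ for a function $\alpha$ (the centro-affine curvature of the plane curve $b$). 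The normalization $\langle p_{uu},p_{uu}\rangle_*=1$ forces $\alpha^2=1$, and the remaining structure equation $p_{uv}=\mp\eta+p$ pins down the sign of $\alpha$ in terms of $\psi$; the curves $a,b$ are then integrated by hand, with the constants fixed by $SO(4,2)$-isometries of $\mathbb{R}^4_2$ rather than by $\mathcal{F}_{ABC}$. Your moving-frame approach is a legitimate alternative and has the virtue of being more systematic, but your description of the coefficient matrix as ``up to a scalar block, nilpotent'' is not quite right: the matrix has eigenvalues $\pm1$ (cases $II_a,II_b$) or $\pm i$ (cases $II_c,II_d$) together with a nontrivial Jordan block, which is what produces the (affine polynomial)$\times$(exponential or trigonometric) form. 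The paper's curve-based reduction sidesteps the four-dimensional linear algebra entirely and makes the split between $\alpha=1$ and $\alpha=-1$ transparent.
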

\begin{proof}
We will present the proof of the lemma for the immersion $p$, as the immersion $q$ would be completely analogous. Recall from Lemma \ref{lem:Classification of types} that the immersion $p$ is of type $II$ if the angle $\psi=\phi+\frac{\pi}{3}$ has a value lying in the set $\left\{\frac{\pi}{3},\frac{2\pi}{3},\frac{4\pi}{3},\frac{5\pi}{3} \right\}$. One can then, using Lemma \ref{lem:null coord}, put generic null coordinates $\left\{u,v \right\}$, depending on the value of $\psi$ on the immersion $p$. Equation (\ref{eq:connpuv}) then shows that the derivative $p_{uu}$ is zero when $\psi$ is either $\frac{2\pi}{3}$ or $\frac{4\pi}{3}$, while $p_{vv}=0$ when $\psi$ is either $\frac{\pi}{3}$ or $\frac{5\pi}{3}$. It is then sufficient to only consider the case when $p_{vv}=0$, as performing a change of coordinates $u \leftrightarrow v$ will yield the congruency when $p_{uu}=0$. Equation (\ref{eq:connpuv}) now reduces to 
\begin{align}\label{eq:derivpII}
p_{uu}&=\frac{2r^2}{\sqrt{3}}\eta,\quad{} p_{uv}=\mp\eta+p,\quad{}p_{vv}=0,
\end{align}
where the sign of $\eta$ in the expression for $p_{uv}$ is negative if $\psi=\frac{\pi}{3}$ and positive if $\psi$ is equal to $\frac{5\pi}{3}$. We will then, as in Lemma \ref{lem:null coord}, fix our coordinate frame $\left\{u,v \right\}$ by normalizing a component of the second fundamental form, which can be obtained through the condition $\left\langle p_{uu}, p_{uu}\right\rangle=1$. Note that this is trivially satisfied by taking $r=\frac{\sqrt[4]{3}}{\sqrt{2}}$. Notice that the vanishing of $p_{vv}$ implies that the immersion $p$ is linear in the coordinate $v$, thus one can write $p(u,v)=a(u)+vb(u)$, where $a$ and $b$ are smooth functions on $p$ which only depending on the coordinate $u$. The derivatives of the immersion with respect to the these coordinates are given by 
\begin{alignat*}{5}
&p_u=a'(u)+vb'(u),\;\quad{}&&
p_v=b(u),\\
&p_{uu}=a''(u)+vb''(u),\;\quad{}&&
p_{uv}=b'(u),\;\quad{}&&
p_{vv}=0.
\end{alignat*}
To simplify the calculations in this proof, we will consider the isometry $\mathfrak{f}_2$  between $(H^3_1,\left\langle\, ,\,\right\rangle_*)$ and $(\Sl, \left\langle\, , \,\right\rangle)$, as shown in Equation (\ref{eq:corr2}). Thus we will now assume that the immersion $p$ is lying in the anti-de Sitter space $(H^3_1,\left\langle\, ,\,\right\rangle_*)$. The following relations hold since $\left\{u,v \right\}$ are null coordinates, meaning $\left\langle p_u, p_u\right\rangle_*=0, \left\langle p_v, p_v\right\rangle_*=0$ and $\left\langle p_u, p_v\right\rangle_*=1$, while also using that $\left\langle p, p\right\rangle_*=-1$ as $p$ is an immersion in $\Sl$:
\begin{alignat*}{5}
&\left\langle a(u), a(u) \right\rangle_*=-1,\;\quad{} &&
\left\langle b(u), b(u) \right\rangle_* =0,\;\quad{} &&
\left\langle b'(u), b'(u) \right\rangle_* =0,\\
&\left\langle b(u), b'(u) \right\rangle_* =0,\;\quad{} &&
\left\langle a(u), b(u) \right\rangle_* =0,\;\quad{} &&
\left\langle a(u), b'(u) \right\rangle_* =-1,\\
&\left\langle a'(u), b'(u) \right\rangle_* =0,\;\quad{}&&
\left\langle a'(u), a'(u) \right\rangle_* =0,\;\quad{}&& \left\langle a(u), a'(u) \right\rangle_*=0,\;\quad{}&& \left\langle a'(u), b(u)\right\rangle_*=1.
\end{alignat*}
A first implication of the above relations is that $\left\{a, a', b, b' \right\}$ is a frame on $\mathbb{R}^4_2$. This means that one can write $b''=\alpha_1 a+\alpha_2 a' + \alpha_3 b+ \alpha_4 b'$, which reduces to $b''=\alpha_3 b$, when applying the above relations. The curve $b$ thus lies in the degenerate plane spanned by $b(0)$ and $b'(0)$ and for simplicity we will denote $\alpha_3$ as $\alpha$ from now on. The differential equation for the curve $b$ becomes $b''(u)=\alpha(u)b(u)$, where $\alpha$ can be seen as the centro-affine curvature of the curve $b$. As this is a curve in the degenerate plane, one can show that 
\begin{align*}
|b\;b'|=b_1b'_2-b'_1b_2=c,
\end{align*}
where $c\in\mathbb{R}$ is a constant. The aim is now to find explicit expressions for the curves $a$ and $b$, which can already be written as
\begin{align*}
b(u)&=(b_1(u), b_2(u), 0,0),\quad{} a(u)=(a_1(u), a_2(u), a_3(u), a_4(u)).
\end{align*}
The relations $\left\langle a(u), b(u)\right\rangle_*=0$ and $\left\langle a(u), b'(u)\right\rangle_*=-1$ yield
\begin{align*}
&\left\langle a(u), b(u) \right\rangle_*=b_2a_3+b_1a_4=0,\quad{}
\left\langle a(u), b'(u) \right\rangle_*=\frac{1}{2}\left( b'_2a_3+b'_1a_4\right)=-1.
\end{align*}
Solving this system of equations results in the following expressions:
\begin{align*}
a_3&=-\frac{2b_1}{c},\quad{}
a_4=\frac{2b_2}{c},
\end{align*}
where we used that $|b\;b'|=c$. The condition that $\left\langle a(u), a(u)\right\rangle_* =-1$, yields that $2a_1\frac{b_2}{c}-2a_2\frac{b_1}{c}=-1$, which implies that one can write the following expressions for the functions $a_1$ and $a_2$:
\begin{align*}
a_1&=\frac{1}{2}\left(b'_1+\kappa(u)b_1\right),\quad{} a_2=\frac{1}{2}\left(b'_2+\kappa(u)b_2\right),
\end{align*}
with $\kappa$ a smooth function on the immersion. As it should hold that $\left\langle a'(u), a'(u) \right\rangle_* =0$, with
\begin{alignat*}{5}
&a'_1=\frac{1}{2}\left(\alpha(u)b_1+\kappa'(u)b_1+\kappa(u)b'_1\right),\;\quad{}&& a'_3=-2\frac{b'_1}{c},\\
&a'_2=\frac{1}{2}\left(\alpha(u)b_2+\kappa'(u)b_2+\kappa(u)b'_2\right),\;\quad{}&& a'_4=-2\frac{b'_2}{c},
\end{alignat*}
one obtains that $\kappa'(u)=-\alpha(u)$. We will now determine the possible values for the centro-affine curvature $\alpha$, using the  expression for the derivatives of the immersion as given in Equation (\ref{eq:derivpII}). As the frame was fixed such that $\left\langle p_{uu}, p_{uu}\right\rangle_*=1$, implying that $\left\langle a''(u), a''(u)\right\rangle_*=1$, one obtains after a straightforward calculation that $\alpha^2=1$. The expression $p_{uv}=\mp\eta+p$ can then be rewritten as $p_{uv}=\mp p_{uu}+p$, which yields
\begin{align*}
b'(u)&=\mp a''(u)\mp vb''(u)+a(u)+vb(u)\\
&=a(u)\mp a''(u)+v(b(u)\mp\alpha b(u)),
\end{align*}
with $b'(u)=a(u)-a''(u)$. The previous equation will thus only hold if $\alpha=1$ and the angle $\psi$ is equal to $\frac{\pi}{3}$, or if $\alpha=-1$ and $\psi$ is equal to $\frac{5\pi}{3}$. We will now consider both cases separately.


\subsubsection*{\textbf{Angle} $\mathbf{\psi}$ \textbf{is equal to} $\mathbf{\frac{\pi}{3}}$}\label{example:II1}
In this case the function $\alpha$ is equal to $1$, resulting in the differential equation $b''(u)=b(u)$. The curve $b$, lying in the degenerate plan, can then be written as $b(u)=c_1e^u+c_2e^{-u}$, with $c_1, c_2\in\mathbb{R}^4_2$. By applying isometries of $\mathbb{R}^4_2$, i.e. elements of the group $SO(4, 2)$, one can take the vectors $c_1, c_2$ lying in the degenerate plane as 
\begin{align*}
c_1&=(1,0,0,0), \quad{} c_2=(0,1,0,0).
\end{align*}
The function $\kappa$, defined as $\kappa(u)=-\alpha(u)$, can now explicitly be written as $\kappa(u)=-u+C$, with $C\in\mathbb{R}$ a constant. The curves $a$ and $b$ are then described as
\begin{align*}
b(u)&=(e^u, e^{-u}, 0, 0),\quad{} a(u)=\left(\frac{(1-u+C)}{2}e^u, -\frac{(1+u-C)}{2}e^{-u}, -\frac{2e^u}{c}, \frac{2e^{-u}}{c}\right).
\end{align*}
Note that this also fixes the arc length parameter $c$ of the curve $b$ as $|b\;b'|=b_1b'_2-b'_1b_2=c=-2$. The immersion $p$ is thus congruent to an open part of the immersion
\begin{align*}
f:\mathbb{R}^2\rightarrow H^3_1:(u,v)\mapsto\left(\frac{(1-u+C+2v)}{2}e^u, -\frac{(1+u-C-2v)}{2}e^{-u}, e^u, -e^{-u}\right),
\end{align*}
where the constant $C$ can be chosen to be zero, by a translation in the $v$-coordinate. Applying the isometry $\mathfrak{f}_2$ in Equation (\ref{eq:corr2}) shows that the immersion $p$ in $\Sl$ has to be congruent to an open part of the immersion 
\begin{align*}
f:\mathbb{R}^2\rightarrow \Sl:(u,v)\mapsto \begin{pmatrix}
\frac{(1-u+2v)}{2}e^u & -\frac{(1+u-2v)}{2}e^{-u} \\
e^u & e^{-u}
\end{pmatrix}.
\end{align*} 
\subsubsection*{\textbf{Angle} $\mathbf{\psi}$ \textbf{is equal to} $\mathbf{\frac{5\pi}{3}}$}\label{example:II2}
In this case $\alpha$ is equal to $-1$, implying that the curve $b$ has to satisfy the differential equation $b''(u)=-b(u)$. The curve is thus of the form $b(u)=c_3\cos(u)+c_4\sin(u)$, with $c_3, c_4\in\mathbb{R}^4_2$. One can once again choose the vectors $c_3, c_4$ in the degenerate plane such that 
\begin{align*}
c_3 &=(1, 0, 0, 0)\quad{} c_4=(0, 1, 0, 0).
\end{align*}
This choice of vectors automatically fixes the value of the arc length parameter $c$ as $|b\;b'|=b_1b'_2-b'_1b_2=c=1$. The function $\kappa$ is now also determined as $\kappa(u)=u+C$, with $C\in\mathbb{R}$ a constant. The curves $a$ and $b$ have as expressions
\begin{align*}
b(u)&=\left(\cos(u), \sin(u), 0,0 \right),\\
a(u)&=\left(\frac{-\sin(u)+(u+C)\cos(u)}{2}, \frac{\cos(u)+(u+C)\sin(u)}{2}, -2\cos(u), 2\sin(u)\right),
\end{align*}  
The immersion $p$ in $H^3_1$ is then congruent to an open part of the immersion 
\begin{align*}
f:\mathbb{R}^2\rightarrow H^3_1:(u,v)\mapsto\left(\frac{-\sin(u)+(u+C+2v)\cos(u)}{2}, \frac{\cos(u)+(u+C+2v)\sin(u)}{2}, -2\cos(u), 2\sin(u) \right),
\end{align*}
where we can once again take $C$ to be zero after a suitable translation of the coordinate $v$. The isometry $\mathfrak{f}_2$ between $H^3_1$ and $\Sl$ then finally shows that the immersion $p$ in $\Sl$ has to be congruent to an open part of the immersion 
\begin{align*}
f:\mathbb{R}^2\rightarrow \Sl:(u,v)\mapsto\begin{pmatrix}
\frac{-\sin(u)+(u+2v)\cos(u)}{2} & \frac{\cos(u)+(u+2v)\sin(u)}{2}\\
-2\cos(u) & -2\sin(u)
\end{pmatrix}.
\end{align*}
Note that the proof for the immersion $q$ would be completely analogous, when one replaces the constant angle function $\psi$ with the angle function $\xi=\phi-\frac{\pi}{3}$.
\end{proof}
\subsubsection*{\textbf{Isoparametric surfaces of type} $\mathbf{IV}$}
The following theorem shows a classification of all flat isoparametric Lorentzian surfaces of type $IV$ in $H^3_1$ \cite{xiao}.
\begin{theorem}[\cite{xiao}]
Let $M$ be a flat isoparametric Lorentzian surface of type $IV$ in $H^3_1$. Then $M$ is congruent to an open part of a principal orbit of $G\subset O(4, 2)$ in $H^3_1$, where
\begin{align*}
G=\begin{pmatrix}
\cos(s)\cosh(t) & \cosh(t)\sin(s) & \cos(s)\sinh(t) & \sin(s)\sinh(t)\\
-\sin(s)\cosh(t) & \cos(s)\cosh(t) & -\sin(s)\sinh(t) & \cos(s)\sinh(t)\\
\cos(s)\sinh(t) & \sin(s)\sinh(t) & \cos(s)\cosh(t) & \sin(s)\cosh(t)\\
-\sin(s)\sinh(t) & \cos(s)\sinh(t) & -\sin(s)\cosh(t) & \cos(s)\cosh(t)
\end{pmatrix},
\end{align*}
with $s, t\in\mathbb{R}$.
\end{theorem}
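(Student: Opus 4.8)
The plan is to pin down $M$ by its first and second fundamental forms and then invoke the rigidity theorem for submanifolds of semi-Riemannian space forms, in the same spirit as the treatment of the type $I$ surfaces in Examples \ref{example:Ia}--\ref{example:Ib} and as in \cite{xiao}. Since $M$ is a Lorentzian hypersurface of the Lorentzian space form $H^3_1$, its normal bundle in $H^3_1$ is spanned by a spacelike unit field $e_3$; together with an orthonormal tangent frame $\{e_1,e_2\}$ with $\langle e_1,e_1\rangle=-1$, $\langle e_2,e_2\rangle=1$ and the position vector $x$ (which is normal in $\mathbb{R}^4_2$), this furnishes an adapted frame along $M$. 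Because $M$ is isoparametric of type $IV$, the shape operator $A_{e_3}$ is, with respect to $\{e_1,e_2\}$, the constant matrix with rows $(a_0,b_0)$ and $(-b_0,a_0)$ for some constants $a_0,b_0$ with $b_0\neq 0$; the Gauss equation $K=-1+\det A_{e_3}=0$ then forces $a_0^{2}+b_0^{2}=1$, so that $(a_0,b_0)=(\cos\theta,\sin\theta)$ with $\theta\notin\pi\mathbb{Z}$, and $M$ carries at most this single real modulus $\theta$.

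The decisive step is the Codazzi equation. Writing the Levi-Civita connection of the flat Lorentzian surface $M$ in the frame $\{e_1,e_2\}$, the identity $(\nabla_{e_i}A_{e_3})e_j=(\nabla_{e_j}A_{e_3})e_i$ together with the constancy of the entries of $A_{e_3}$ produces, after a short computation and because $b_0\neq 0$, the conclusion that the connection coefficients of $\{e_1,e_2\}$ vanish, i.e. the adapted orthonormal frame is parallel. Hence along each coordinate direction the $\mathbb{R}^4_2$-valued frame $(x,e_1,e_2,e_3)$ satisfies a linear ordinary differential equation with a fixed coefficient matrix lying in the Lie algebra of $O(4,2)$, whose solution is a two-parameter abelian group of isometries of $H^3_1$ applied to the initial frame. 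Consequently $M$ is an open part of a two-dimensional orbit of such an abelian group. I expect the bookkeeping here --- keeping track of causal characters and signs, and later conjugating the abelian generators into a standard position --- to be the only real source of friction; the analysis itself is short once the parallel frame has been identified.

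It then remains to identify this abelian orbit with $G$. One checks that $G(s,t)$ is the Kronecker product of the $2\times 2$ boost with rapidity $t$ and the $2\times 2$ rotation through angle $s$, so that $G$ is indeed a two-parameter abelian subgroup of $O(4,2)$ preserving $H^3_1$, and that the Lie algebras of the abelian groups arising in the previous step are, for $\theta\notin\pi\mathbb{Z}$, conjugate inside $O(4,2)$ to that of $G$; equivalently, one verifies by direct computation that the principal (two-dimensional) orbits of $G$ are flat, Lorentzian and of type $IV$, and that as the orbit varies over its one-parameter family (together with a reparametrisation) one realises every admissible pair $(I,II)_\theta$ from the first step. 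The rigidity theorem --- applied exactly as in the proof of \ref{Main theorem 2} --- then shows that $M$ is congruent, via an element of $O(4,2)$, to an open part of the corresponding principal orbit of $G$, which is the claim.
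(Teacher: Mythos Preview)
The paper does not prove this theorem. It is quoted from \cite{xiao} as an external classification result and used as a black box: immediately after the statement the paper passes to Example \ref{example:IV}, where it unwinds the orbit description into an explicit parametrisation, computes the mean curvature, and sets up the null coordinates that feed into Lemma \ref{lem:TypeIV}. There is therefore no in-paper argument to compare your sketch against.

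For what it is worth, your outline is a reasonable reconstruction of how such a result is proved and is in the spirit of \cite{xiao}. One point deserves care: the type $IV$ canonical form is realised in a \emph{specific} Lorentzian orthonormal frame at each point (the one with equal diagonal entries), determined only up to the discrete symmetries of that form rather than up to all of $O(1,1)$. Your Codazzi step presupposes a smooth choice of such a frame in which the entries of $A_{e_3}$ are already the constants $a_0,b_0$; ``isoparametric'' alone only gives constancy of the eigenvalues, so a short argument is needed to pass from that to a frame with constant matrix entries before the Codazzi identity can force parallelism. You correctly flag this bookkeeping as the main friction; once it is done, the remaining identification of the abelian orbit with a principal orbit of $G$ via conjugation in $O(4,2)$ is routine.
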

We will now once again explicitly describe a surface of this type, as shown in the following example.
\begin{example}[Type $IV$]\label{example:IV}
As $M$ should be congruent to an open part of a principal orbit of $G$, it has to be congruent to $G\cdot a$, with $a\in H^3_1$. Thus one can consider orbits of the form 
\begin{align*}
\begin{pmatrix}
\cos(s)\cosh(t) & \cosh(t)\sin(s) & \cos(s)\sinh(t) & \sin(s)\sinh(t)\\
-\sin(s)\cosh(t) & \cos(s)\cosh(t) & -\sin(s)\sinh(t) & \cos(s)\sinh(t)\\
\cos(s)\sinh(t) & \sin(s)\sinh(t) & \cos(s)\cosh(t) & \sin(s)\cosh(t)\\
-\sin(s)\sinh(t) & \cos(s)\sinh(t) & -\sin(s)\cosh(t) & \cos(s)\cosh(t)
\end{pmatrix}\cdot \begin{pmatrix}
y_1\\
y_2\\
y_3\\
y_4
\end{pmatrix}, 
\end{align*}
with $y_1, y_2, y_3, y_4\in\mathbb{R}$ satisfying $-y_1^2-y_2^2+y_3^2+y_4^2=-1.$ By applying isometries from the ambient space $H^3_1$, i.e. elements of the group $SO(4, 2)$, one can see that this orbit is congruent to a principal orbit with $y_1=y_4=0$. Then one has that $-y_2^2+y_4^2=-1$, thus there is an $\alpha\in\mathbb{R}$ such that $y_2=\cosh(\alpha)$ and $y_4=\sinh(\alpha)$. Thus $M$ has to be congruent to an open part of the immersion $f:\mathbb{R}^2\rightarrow H^3_1:(s,t)\rightarrow f(s,t)$, with
\begin{align*}
F(s, t)&=\begin{pmatrix}
\cos(s)\cosh(t)\sinh(\alpha)+\cos(s)\sinh(t)\sinh(\alpha)\\
\cos(s)\cosh(t)\cosh(\alpha)-\sin(s)\sinh(t)\sinh(\alpha)\\
\sin(s)\sinh(t)\cosh(\alpha)+\cos(s)\cosh(t)\sinh(a)\\
\cos(s)\sinh(t)\cosh(\alpha)-\sin(s)\cosh(t)\sinh(\alpha)
\end{pmatrix}.
\end{align*}
We will now, as in Example \ref{example:Ia}, determine the length of the mean curvature vector of the immersion, while also constructing a null frame. The tangent space at every point of the immersion is spanned by the vector fields
\begin{align*}
\partial_s&=\begin{pmatrix}
\cos(s)\cosh(t)\cosh(a)-\sin(s)\sinh(t)\sinh(\alpha)\\ -\sin(s)\cosh(t)\cosh(\alpha)-\cos(s)\sinh(t)\sinh(\alpha)\\ \cos(s)\sinh(t)\cosh(\alpha)-\sin(s)\cosh(t)\sinh(\alpha)\\ -\sin(s)\sinh(t)\cosh(\alpha)-\cos(s)\cosh(t)\sinh(\alpha)
\end{pmatrix},\;
\partial_t&=\begin{pmatrix}
\sin(s)\sinh(t)\cosh(\alpha)+\cos(s)\cosh(t)\sinh(\alpha)\\
\cos(s)\sinh(t)\cosh(\alpha)-\sin(s)\cosh(t)\sinh(\alpha)\\
\sin(s)\cosh(t)\cosh(\alpha)+\cos(s)\sinh(t)\sinh(\alpha)\\
\cos(s)\cosh(t)\cosh(\alpha)-\sin(s)\sinh(t)\sinh(\alpha)
\end{pmatrix}
\end{align*}
One can construct an orthonormal frame $\left\{e_1, e_2 \right\}$ with $\left\langle e_1, e_1\right\rangle=-1, \left\langle e_2, e_2\right\rangle=1$ and $\left\langle e_1, e_2\right\rangle=0$ on the immersion by 
\begin{align*}
e1&=\partial_s,\quad{}
e2=\frac{1}{\cosh(2\alpha)}\left(\partial_t-\sinh(2\alpha)\partial_s\right).
\end{align*}
Once again, through an analogous approach as in Example \ref{example:Ia}, one can calculate the length of the mean curvature vector field, given by
\begin{align*}
\left\langle H, H\right\rangle=\tanh^2(2\alpha).
\end{align*} 
It is also possible to construct a null frame $\left\{\partial_u, \partial_v \right\}$ with $\left\langle\partial_u, \partial_u\right\rangle=\left\langle\partial_v, \partial_v\right\rangle=0$ and $\left\langle\partial_u, \partial_v\right\rangle=1$, with
\begin{align*}
\partial_u&=-re_1+re_2,\;\; \partial_v=\frac{1}{2r}e_1+\frac{1}{2r}e_2,
\end{align*}
with $r$ a non-zero function on the immersion. The value of this function can then be fixed by normalizing $h(\partial_u, \partial_u)$, yielding the condition
\begin{align*}
4r^4&=\cosh^2(2\alpha)e^{-4\alpha}.
\end{align*}
One can now construct the coordinates $\left\{u,v\right\}$ through the frame $\left\{\partial_u, \partial_v \right\}$ as
\begin{align*}
\partial_u&=-(r+r\tanh(2\alpha))\partial_s+r\sech(2\alpha)\partial_t,\quad{}
\partial_v=\frac{1}{2r}(1-\tanh(2\alpha)\partial_s+\frac{1}{2r}\sech(2\alpha)\partial_t.
\end{align*}
Thus the coordinates $\left\{s,t \right\}$ can be written in terms of the null coordinates $\left\{u,v \right\}$ as
\begin{align*}
s&=-(r+r\tanh(2\alpha))u+\frac{1}{2r}(1-\tanh(2\alpha))v,\quad{}
t=r\sech(2\alpha)u+\frac{1}{2r}\sech(2\alpha)v.
\end{align*}
We conclude this example by considering the isometry $\mathfrak{f}_1$ between $(H^3_1,\left\langle\, ,\,\right\rangle)$ and $(\Sl, \left\langle\, , \,\right\rangle)$, as shown in Equation (\ref{eq:corr1}), yielding the corresponding immersion in $\Sl$ as
\begin{align*}
f:\mathbb{R}^2\rightarrow\Sl: (s,t)\mapsto\begin{pmatrix}
e^{-t}(\sin(s)\cosh(\alpha)-\cos(s)\sinh(\alpha)) &-e^{-t}(\cos(s)\cosh(\alpha)-\sin(s)\sinh(\alpha)) \\
e^t(\cos(s)\cosh(\alpha)-\sin(s)\sinh(\alpha)) & e^t(\sin(s)\cosh(\alpha)-\cos(s)\sinh(\alpha))
\end{pmatrix}
\end{align*}
\end{example}
This example permits a similar lemma to Lemma \ref{lem:TypeI}, but applicable to the case when the immersions $p$ and $q$ are of type $IV$.
\begin{lemma}\label{lem:TypeIV}
Let $F:\Sigma\rightarrow\SL:(x, y)\mapsto(p(x, y), q(x, y))$ be an immersion of a degenerate almost complex surface, for which the almost product structure $P$ does not preserve the tangent bundle and let $\phi$ be the constant angle function locally determining $F$. If the immersion $p$ is of type $IV$, then it is congruent to an open part of the immersion given in Example \ref{example:IV}, with congruency condition
\begin{align*}
3\tanh^2(2\alpha)&=4\sin^2\psi,
\end{align*}
where $\psi=\phi+\frac{\pi}{3}$. A completely analogous statement holds for when immersion $q$ is of type $IV$, when one replaces the angle function $\psi$ by $\xi=\pi-\frac{\pi}{3}$. 
\end{lemma}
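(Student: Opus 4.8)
The plan is to run the same argument as in the proof of Lemma \ref{lem:TypeI}, with the type $IV$ classification replacing the type $I$ one. I work with the immersion $p\colon\Sigma\to\Sl$; the statement for $q$ follows verbatim after the substitution $\psi\mapsto\xi=\phi-\frac{\pi}{3}$ recorded at the end of the lemma. By \ref{Main theorem 2}, $F$ is locally determined by its constant angle function $\phi$, and Proposition \ref{prop:Isoparam} shows that $p$ is a flat Lorentzian isoparametric surface in $\Sl$ whose mean curvature vector has constant length $\left\langle H,H\right\rangle=\frac{4\sin^2\psi}{3}$, with $\psi=\phi+\frac{\pi}{3}$; this is immediate from $H=\frac{2\sin\psi}{\sqrt3}\,\eta$ together with $\left\langle\eta,\eta\right\rangle=1$.

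Next I would pass through the correspondence of Section \ref{section:Correspondence}: the isometry $\mathfrak{f}_1$ between $H^3_1$ and $\Sl$ lets me view $p$ as a flat isoparametric Lorentzian surface of type $IV$ in the anti-de Sitter space $H^3_1$, and being an isometry it preserves the induced metric and the second fundamental form, hence the length of the mean curvature vector. The type $IV$ classification theorem of \cite{xiao} quoted above then says $p$ is congruent to an open part of a principal orbit of $G\subset O(4,2)$, and Example \ref{example:IV} reduces such an orbit, after applying ambient isometries, to the explicit one-parameter family $f$ with parameter $\alpha$, for which Example \ref{example:IV} computes $\left\langle H,H\right\rangle=\tanh^2(2\alpha)$ and records the corresponding immersion in $\Sl$.

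It then remains to pin down the congruence and extract the relation. Since $p$ is flat, the Gauss equation $K=-1+\det S=0$ forces $a_0^2+b_0^2=1$ for the complex eigenvalues $a_0\pm i b_0$ of the type $IV$ shape operator, while its averaged trace gives $\left\langle H,H\right\rangle=a_0^2$; hence $\left\langle H,H\right\rangle$ determines the pair $\{a_0,b_0\}$ up to sign, so two surfaces of Example \ref{example:IV} with equal mean curvature length are congruent, and $p$ is congruent to the one whose parameter $\alpha$ satisfies $\tanh^2(2\alpha)=\frac{4\sin^2\psi}{3}$, i.e. $3\tanh^2(2\alpha)=4\sin^2\psi$. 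Finally I would check consistency: by Lemma \ref{lem:Classification of types}, $p$ is of type $IV$ exactly when $2\cos2\psi>-1$, which is equivalent to $\sin^2\psi<\frac34$, so $\frac{4\sin^2\psi}{3}\in[0,1)$ and the displayed equation always admits a solution $\alpha$, unique up to sign, matching $\tanh^2(2\alpha)\in[0,1)$. Replacing $\psi$ by $\xi=\phi-\frac{\pi}{3}$ and $\mathfrak{f}_1$-arguments on the second factor gives the assertion for $q$.

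The step I expect to be the main obstacle is the rigidity claim: that a flat isoparametric Lorentzian surface of type $IV$ in $H^3_1$ is determined up to congruence by the length of its mean curvature vector. One must make sure the classification of \cite{xiao} genuinely yields a single one-parameter family, with no hidden discrete invariant, and that the flat induced metric together with the constant principal data suffice to invoke the fundamental theorem of submanifolds in the indefinite setting (so that matching $\left\langle H,H\right\rangle$ really produces a congruence, not merely agreement of an invariant); the remainder is bookkeeping with $\mathfrak{f}_1$ and the formulas already established in Proposition \ref{prop:Isoparam} and Example \ref{example:IV}.
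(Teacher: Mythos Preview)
Your proposal is correct and follows essentially the same route as the paper: the paper's own proof simply says ``a similar argument as in the proof of Lemma~\ref{lem:TypeI}'' and then matches the mean curvature length $\langle H,H\rangle=\frac{4\sin^2\psi}{3}$ from Proposition~\ref{prop:Isoparam} against the value $\tanh^2(2\alpha)$ computed in Example~\ref{example:IV}. You have in fact supplied more detail than the paper does, including the consistency check that $\sin^2\psi<\tfrac{3}{4}$ guarantees a solution for $\alpha$ and the explicit rigidity argument via the Gauss equation; the paper leaves both of these implicit.
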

\begin{proof}
As before, we will once again only focus on the immersion $p:\Sigma\rightarrow\Sl$, as the proof for the immersion $q$ is completely analogous. A similar argument as in the proof of Lemma \ref{lem:TypeI} immediately shows that the if the immersion $p$ is of type $IV$, that it is congruent to the surface given in Example \ref{example:IV}, with congruency condition 
\begin{align*}
3\tanh^2(2\alpha)&=4\sin^2\psi,
\end{align*}
where $\psi$ is the constant angle function $\psi=\phi+\frac{\pi}{3}$. One immediately obtains the same result for the immersion $q$, when one substitutes the angle $\psi$ by the constant angle function $\xi$, defined as $\xi=\pi-\frac{\pi}{3}$.
\end{proof}

\section{Degenerate almost complex surfaces with four-dimensional distribution $\mathcal{D}$}\label{section:classifyingresult}
In this section, we will first use the isoparametric surfaces in $\SL$, as described in Section \ref{section:Isoparam}, to construct surfaces in the ambient nearly K\"ahler $\SL$. Subsequently, we will prove that these surfaces, satisfying some additional conditions, encompass all the degenerate almost complex surfaces in $\SL$ for which the distribution $\mathcal{D}$, defined in Equation (\ref{def: distrD}), is four-dimensional. This is demonstrated in Theorem \ref{theorem:classifying result}, thereby classifying all degenerate almost complex surfaces for which the almost product structure $P$ does not preserve the tangent bundle.

\subsection{Examples of degenerate almost complex surfaces}

In this section, we construct new surfaces in the nearly K\"ahler $\SL$ by considering the assumption that the surface restricted to one of the factors $\SL$ is a Lorentzian isoparametric surface in $\Sl$, which are classified in Section \ref{section:Isoparam}. We define these surfaces as $F:\mathbb{R}^2\rightarrow\SL:(x, y)\mapsto(f_1(x, y), f_2(x, y))$, where $f_1, f_2:\mathbb{R}^2\rightarrow\Sl$ are Lorentzian isoparametric surfaces.

\begin{remark}
Initially, we consider two immersions, $f_1$ and $f_2$, which are parameterized by coordinates $\left\{s,t\right\}$ and $\left\{\tilde{s}, \tilde{t}\right\}$, respectively. To describe the immersion $F$ in $\SL$, we introduce normalized null coordinates $\left\{u,v\right\}$ and $\left\{\tilde{u},\tilde{v}\right\}$ on these surfaces, as exemplified in Section \ref{section:Isoparam}. We also use the compatibility conditions outlined in the proof of Lemma \ref{lem:null coord}. For the sake of clarity, we will first express the immersions $f_1$ and $f_2$ using the coordinates $\left\{s,t\right\}$ and $\left\{\tilde{s}, \tilde{t}\right\}$, respectively. Then, we'll provide the compatibility equations using the null coordinates.

Note that when the immersion $f_1$ is of type $IV$, we will follow the notation introduced in Example \ref{example:IV}. Similarly, when dealing with the immersion $f_2$ of type $IV$, we will denote the parameter $\alpha$ as $\beta$.
\end{remark}
We first present an example of a surface, where the immersions $f_1$ is of type $II_a$ and the immersion $f_2$ is of type $II_d$, thus both of them are described by Lemma \ref{lem:TypeII}.
\begin{example}[Immersion of type $II_a-II_d$]\label{example:II-IIa}
	Consider the immersion \\${F:\mathbb{R}^2\rightarrow\SL: (u,v)\mapsto(f_1(u,v), f_2(u, v))}$, with expressions
	\begin{align*}
		f_1(u,v)&= \begin{pmatrix}
			\frac{(1-u+2v)}{2}e^u & -\frac{(1+u-2v)}{2}e^{-u} \\
			e^u & e^{-u}
		\end{pmatrix},\\
		f_2(\tilde{u},\tilde{v})&=\begin{pmatrix}
			\frac{-\sin(\tilde{u})+(\tilde{u}+2\tilde{v})\cos(\tilde{u})}{2} & \frac{\cos(\tilde{u})+(\tilde{u}+2\tilde{v})\sin(\tilde{u})}{2}\\
			-2\cos(\tilde{u}) & -2\sin(\tilde{u})
		\end{pmatrix},
	\end{align*}
	where $\tilde{u}=-\frac{1}{2}u-v$ and $\tilde{v}=\frac{3}{4}u-\frac{1}{2}v$. 
\end{example}
We can after a rather lengthy calculation observe that this surface is, in fact, a degenerate almost complex surface in the nearly K\"ahler $\SL$. Now, we will introduce surfaces where the immersion $f_1$ belongs to type $II$, and the immersion $f_2$ belongs to type $IV$, with the added condition that the parameter $\alpha$ in Example \ref{example:IV} is zero. These four surfaces also serve as examples of degenerate almost complex surfaces, as a lengthy but straightforward calculation confirms.
\begin{example}[Immersion of type $II_a-IV$]\label{example:II-IV}
	Consider the immersions \\${F_i:\mathbb{R}^2\rightarrow\SL: (u,v)\mapsto (f_{1i}(u,v), f_2(u, v))}$, with $i\in\left\{a, b, c, d\right\}$. These immersions are then expressed as
	\begin{alignat*}{5}
		&f_{1a}(u,v)= \begin{pmatrix}
			\frac{(1-u+2v)}{2}e^u & -\frac{(1+u-2v)}{2}e^{-u} \\
			e^u & e^{-u}
		\end{pmatrix},\quad{}
		&&f_{1c}(u,v)= \begin{pmatrix}
			\frac{-\sin(v)+(v+2u)\cos(v)}{2} & \frac{\cos(v)+(v+2u)\sin(v)}{2}\\
			-2\cos(v) & -2\sin(v)
		\end{pmatrix},\\
		&f_{1b}(u,v)= \begin{pmatrix}
			\frac{(1-v+2u)}{2}e^v & -\frac{(1+v-2u)}{2}e^{-v} \\
			e^v & e^{-v}
		\end{pmatrix},\quad{}
		&&f_{1d}(u,v)= \begin{pmatrix}
			\frac{-\sin(u)+(u+2v)\cos(u)}{2} & \frac{\cos(u)+(u+2v)\sin(u)}{2}\\
			-2\cos(u) & -2\sin(u)
		\end{pmatrix},\\
		&f_2(\tilde{s}, \tilde{t})=\begin{pmatrix}
			e^{-\tilde{t}}\sin(\tilde{s}) &-e^{-\tilde{t}}\cos(\tilde{s}) \\
			e^{\tilde{t}}\cos(\tilde{s}) & e^{\tilde{t}}\sin(\tilde{s})
		\end{pmatrix}, 
	\end{alignat*}
	where the coordinates $\left\{\tilde{s},\tilde{t} \right\}$ are given by 
	\begin{alignat*}{5}
		&\tilde{s}=\frac{\sqrt{2}}{2}(v-u)\;\quad{}&&
		\tilde{t}=\frac{\sqrt{2}}{2}(u+v),\\
		&\tilde{u}=-\frac{\sqrt[4]{3}}{2}u-\sqrt[4]{3}v,\;\quad{}&&\tilde{v}=\frac{3^{3/4}}{4} u -\frac{1}{2\sqrt[4]{3}}v.
	\end{alignat*} 
\end{example}

The following two examples are surfaces where the immersion $f_1$ is of type $I$ and the immersion $f_2$ is of type $IV$. Note that there are two examples as a Lorentzian isoparametric surface of type $I$ can have two forms, as illustrated by Lemma \ref{lem:TypeI}, while Lemma \ref{lem:TypeIV} yields the description of $f_2$. These immersions are not automatically degenerate almost complex surfaces, but in Section \ref{section:classifyingresult} we will present sufficient conditions on the parameters $\lambda$ and $\beta$ for these surfaces to be degenerate almost complex surfaces. 
\begin{example}[Immersion of type $I-IV$]\label{example:IaIV}
	Consider the immersion \\${F:\mathbb{R}^2\rightarrow\SL: (u,v)\mapsto (f_1(u,v), f_2(u, v))}$, with expressions
	\begin{align*}
		f_1(s,t)&=\sqrt{\frac{1}{1-\lambda^2}}\begin{pmatrix}
			\cos(s)-\sqrt{\lambda^2}\cos(t) & -\sin(s)+\sqrt{\lambda^2}\sin(t) \\
			\sin(s)+\sqrt{\lambda^2}\sin(t)  & \cos(s)-\sqrt{\lambda^2}\cos(t)
		\end{pmatrix},\\
		f_2(\tilde{s},\tilde{t})&=\begin{pmatrix}
			e^{-\tilde{t}}(\sin(\tilde{s})\cosh(\beta)-\cos(\tilde{s})\sinh(\beta)) &-e^{-\tilde{t}}(\cos(\tilde{s})\cosh(\beta)-\sin(\tilde{s})\sinh(\beta)) \\
			e^{\tilde{t}}(\cos(\tilde{s})\cosh(\beta)-\sin(\tilde{s})\sinh(\beta)) & e^{\tilde{t}}(\sin(\tilde{s})\cosh(\beta)-\cos(\tilde{s})\sinh(\beta))
		\end{pmatrix},
	\end{align*}
	with $\lambda,\beta\in\mathbb{R}$ and $\lambda^2<1$. The coordinates $\left\{s, t \right\}$ and $\left\{\tilde{s}, \tilde{t}\right\}$ are given by
	\begin{alignat*}{5}
		&s=-r\sqrt{1-\lambda^2}u+\frac{1}{2r}\sqrt{1-\lambda^2}v,\;\quad{}&&
		t=r\sqrt{\frac{1-\lambda^2}{\lambda^2}}u+\frac{1}{2r}\sqrt{\frac{1-\lambda^2}{\lambda^2}}v\\
		&\tilde{s}=-(R+R\tanh(2\beta))\tilde{u}+\frac{1}{2R}(1-\tanh(2\beta)\tilde{v},\;\quad{}&&
		\tilde{t}=R\sech(2\beta)\tilde{u}+\frac{1}{2R}\sech(2\beta)\tilde{v}\\
		&\tilde{u}=-\frac{r}{2R}u-\frac{\sqrt{3}}{2rR}v,\;\quad{}&&\tilde{v}=\frac{\sqrt{3}rR}{2}u-\frac{R}{2r}v, 
	\end{alignat*}
	where $r$ and $R$ are fixed to make $\left\{u,v\right\}$ and $\left\{\tilde{u},\tilde{v}\right\}$ normalized null coordinates by
	\begin{align*}
		(1-\lambda^2)^2r^4=\lambda^2,\quad{}4R^4=\cosh^2(2\beta)e^{-4\beta}.
	\end{align*} 
\end{example} 
\begin{example}[Immersion of type $I-IV$]\label{example:IbIV}
	Consider the immersion \\${F:\mathbb{R}^2\rightarrow\SL: (u,v)\mapsto (f_1(u,v), f_2(u, v))}$, with expressions
	\begin{align*}
		f_1(s,t)&=\sqrt{\frac{1}{\lambda^2-1}}\begin{pmatrix}
			-e^{-s} & -\sqrt{\lambda^2}e^{-t}\\
			\sqrt{\lambda^2}e^t & e^s
		\end{pmatrix},\\
		f_2(\tilde{s},\tilde{t})&=\begin{pmatrix}
			e^{-\tilde{t}}(\sin(\tilde{s})\cosh(\beta)-\cos(\tilde{s})\sinh(\beta)) &-e^{-\tilde{t}}(\cos(\tilde{s})\cosh(\beta)-\sin(\tilde{s})\sinh(\beta)) \\
			e^{\tilde{t}}(\cos(\tilde{s})\cosh(\beta)-\sin(\tilde{s})\sinh(\beta)) & e^{\tilde{t}}(\sin(\tilde{s})\cosh(\beta)-\cos(\tilde{s})\sinh(\beta))
		\end{pmatrix},
	\end{align*}
	with $\lambda,\beta\in\mathbb{R}$ and $\lambda>1$. The coordinates $\left\{s, t \right\}$ and $\left\{\tilde{s}, \tilde{t}\right\}$ are given by
	\begin{alignat*}{5}
		&s=-r\sqrt{\lambda^2-1}u+\frac{1}{2r}\sqrt{\lambda^2-1}v,\;\quad{}&&
		t=r\sqrt{\frac{\lambda^2-1}{\lambda^2}}u+\frac{1}{2r}\sqrt{\frac{\lambda^2-1}{\lambda^2}}v\\
		&\tilde{s}=-(R+R\tanh(2\beta))\tilde{u}+\frac{1}{2R}(1-\tanh(2\beta)\tilde{v},\;\quad{}&&
		\tilde{t}=R\sech(2\beta)\tilde{u}+\frac{1}{2R}\sech(2\beta)\tilde{v}\\
		&\tilde{u}=-\frac{r}{2R}u-\frac{\sqrt{3}}{2rR}v,\;\quad{}&&\tilde{v}=\frac{\sqrt{3}rR}{2}u-\frac{R}{2r}v, 
	\end{alignat*}
	where $r$ and $R$ are fixed to make $\left\{u,v\right\}$ and $\left\{\tilde{u},\tilde{v}\right\}$ normalized null coordinates by
	\begin{align*}
		(1-\lambda^2)^2r^4=\lambda^2,\quad{}4R^4=\cosh^2(2\beta)e^{-4\beta}.
	\end{align*}
\end{example}
We conclude our list of examples by considering the case where the immersions $f_1$ and $f_2$ are both of type $IV$, which are classified in Lemma \ref{lem:TypeIV}. Section \ref{section:SL} will also introduce conditions on the parameters $\alpha$ and $\beta$ for these surfaces to be degenerate almost complex surfaces.
\begin{example}[Immersion of type $IV-IV$]\label{example:IVIV}
	Consider the immersion \\${F:\mathbb{R}^2\rightarrow \SL:(u,v)\mapsto (f_1(u,v), f_2(u,v))}$, with expressions
	\begin{align*}
		f_1(s,t)&=\begin{pmatrix}
			e^{-t}(\sin(s)\cosh(\alpha)-\cos(s)\sinh(\alpha)) &-e^{-t}(\cos(s)\cosh(\alpha)-\sin(s)\sinh(\alpha)) \\
			e^t(\cos(s)\cosh(\alpha)-\sin(s)\sinh(\alpha)) & e^t(\sin(s)\cosh(\alpha)-\cos(s)\sinh(\alpha))
		\end{pmatrix},\\
		f_2(\tilde{s}, \tilde{t})&=\begin{pmatrix}
			e^{-\tilde{t}}(\sin(\tilde{s})\cosh(\beta)-\cos(\tilde{s})\sinh(\beta)) &-e^{-\tilde{t}}(\cos(\tilde{s})\cosh(\beta)-\sin(\tilde{s})\sinh(\beta)) \\
			e^{\tilde{t}}(\cos(\tilde{s})\cosh(\beta)-\sin(\tilde{s})\sinh(\beta)) & e^{\tilde{t}}(\sin(\tilde{s})\cosh(\beta)-\cos(\tilde{s})\sinh(\beta))
		\end{pmatrix},
	\end{align*}
with $\alpha, \beta\in\mathbb{R}$. The coordinates $\left\{s, t\right\}$ and $\left\{ \tilde{s}, \tilde{t} \right\}$ are given by
	\begin{align*}
		s&=-(r+r\tanh(2\alpha))u+\frac{1}{2r}(1-\tanh(2\alpha)v,\quad{}
		t=r\sech(2\alpha)u+\frac{1}{2r}\sech(2\alpha)v,\\
		\tilde{s}&=-(R+R\tanh(2\beta))\tilde{u}+\frac{1}{2R}(1-\tanh(2\beta))\tilde{v},\quad{}
		\tilde{t}=R\sech(2\beta)\tilde{u}+\frac{1}{2R}\sech(2\beta)\tilde{v},
	\end{align*}
	where $r$ and $R$ are fixed to make $\left\{u, v\right\}$ and $\left\{\tilde{u}, \tilde{v} \right\}$ normalized null coordinates by
	\begin{align*}
		4r^4&=\cosh^2(2\alpha)e^{-4\alpha},\quad{} 4R^4=\cosh^2(2\beta)e^{-4\beta}.
	\end{align*}
\end{example}
\subsection{Classifying result}

In this section we will show that all degenerate almost complex surfaces in the nearly K\"ahler $\SL$ for which the almost product structure $P$ does not preserve the tangent bundle, are in fact locally congruent to one of the examples in the previous section, when subject to some additional conditions. This is demonstrated in \ref{theorem:classifying result}. 
\begin{named}{Main Theorem 3}\label{theorem:classifying result}
	Let $\Sigma$ be an immersed, degenerate almost complex surface in the nearly K\"ahler \\${(\SL, J, g)}$ for which the almost product structure $P$ does not preserve the tangent bundle. Denote the constant angle function locally determining the immersion  as $\phi$ and define two constant angles $\psi, \xi$ as $\psi=\phi+\frac{\pi}{3}$ and $\xi=\phi-\frac{\pi}{3}$. Then $\Sigma$ is locally congruent to an open part of the immersion $F:\mathbb{R}^2\rightarrow\SL$, whose expression depends on the value of $\phi$.
	\begin{align*}
	&\text{If } \phi\in\left\{0, \pi \right\}, \text{ then } F \text{ is described in Example \ref{example:II-IIa}};\\
	&\text{If }\phi\in\left\{\frac{\pi}{3},\frac{2\pi}{3},\frac{4\pi}{3},\frac{5\pi}{3}\right\},  \text{ then } F \text{ is described by immersion} \\&\quad{}F_b, F_a, F_d\text{ and }F_c,\text{ respectively, in Example \ref{example:II-IV}};\\		
	&\text{If } \phi\in\left]0, \frac{\pi}{3} \right[\cup\left]\pi, \frac{4\pi}{3} \right[,\text{ then }F \text{ is described in Example \ref{example:IaIV} or Example \ref{example:IbIV}},\\&\quad{} \text{with extra conditions that } 3(1+\lambda^2)^2=\lambda^2\sin^2\psi\text{ and }3\tanh^2(2\beta)=4\sin^2\xi;\\
	&\text{If } \phi\in\left]\frac{\pi}{3}, \frac{2\pi}{3}\right[\cup\left]\frac{4\pi}{3}, \frac{5\pi}{3}\right[, \text{ then } F \text{ is described in Example \ref{example:IVIV}},\\&\quad{} \text{with extra conditions that }3\tanh^2(2\alpha)=4\sin^2\psi\text{ and }3\tanh^2(2\beta)=4\sin^2\xi;\\
	&\text{If } \phi\in\left]\frac{2\pi}{3}, \pi\right[\cup\left]\frac{5\pi}{3}, 2\pi\right[, \text{ then } F \text{ is described in Example \ref{example:IaIV} or Example \ref{example:IbIV}},\\&\quad{} \text{with extra conditions that } 3(1+\lambda^2)^2=\lambda^2\sin^2\xi\text{ and }3\tanh^2(2\beta)=4\sin^2\psi.
\end{align*}
\end{named}
\begin{proof}
Let $F:\Sigma\rightarrow\SL:(x, y)\mapsto (p(x, y), q(x, y))$ be an immersion of a degenerate almost complex surface in the nearly K\"ahler $\SL$, for which the almost product structure $P$ does not preserve the tangent bundle. \ref{Main theorem 2} then shows that this immersion is locally determined by a constant angle function $\phi$. Proposition \ref{prop:Isoparam} yields that the immersions $p,q:\Sigma\rightarrow\Sl$ are flat Lorentzian isoparametric surfaces in the semi-Riemannian $\Sl$, where the length of their mean curvature vector fields depend on the constant angle function $\phi$ as
\begin{align}\label{eq:Hproof}
\text{immersion }p\,:\, \left\langle H, H\right\rangle\,=\frac{4\sin^2\psi}{3},\quad{}\text{immersion }q\,:\, \left\langle H, H\right\rangle=\frac{4\sin^2\xi}{3},
\end{align}
where $\psi=\phi+\frac{\pi}{3}$ and $\xi=\phi-\frac{\pi}{3}$ are constant angle functions that were introduced for simplicity. Lemma \ref{lem:Classification of types} determines the possible types of these isoparametric immersions, which also depend on the angle $\phi$, as shown in Table \ref{table:types}. In Lemma \ref{lem:TypeI}, Lemma \ref{lem:TypeII} and Lemma \ref{lem:TypeIV} we give classification results for the immersions $p$ and $q$, where Equation (\ref{eq:Hproof}) plays an important role in ensuring congruency. Note that these results do not automatically ensure that the immersion $F:\Sigma\rightarrow\SL$ is a degenerate almost complex surface, as the coordinates of the parametrizations of $p$ and $q$ do not necessarily coincide. This issue can be resolved by applying Lemma \ref{lem:null coord}, which showed how to construct generic null coordinates $\left\{u,v\right\}$ and $\left\{\tilde{u}, \tilde{v} \right\}$ on $p$ and $q$, respectively, with compatibility conditions
\begin{align*}
\tilde{u}&=-\frac{r}{2R}u-\frac{\sqrt{3}}{2rR}v,\quad{}\tilde{v}=\frac{\sqrt{3}rR}{2}u-\frac{R}{2r}v, 
\end{align*} 
where the values of the constants $r$ and $R$ depend on the constant angle function $\phi$. We will now classify all the degenerate almost complex surfaces in $\SL$ and distinguish between the different values of the constant angle $\phi$, as shown in Table \ref{table:types}.

\subsubsection*{\textbf{Angle} $\mathbf{\phi}$ \textbf{is zero}} In this case the angle $\psi$ is equal to $\frac{\pi}{3}$ and the angle $\xi$ is equal to $\frac{5\pi}{3}$. Lemma \ref{lem:Classification of types} then shows that both immersions are of type $II$. More specifically, one can deduce that $p$ is of type $II_a$, while $q$ is of type $II_d$ by applying Lemma \ref{lem:TypeII}. 
Compatibility conditions for the coordinates on the immersions $p$ and $q$ are given by Lemma \ref{lem:null coord}, thereby ensuring that the surface $\Sigma$ is locally congruent to an open part of the immersion $F$ in Example \ref{example:II-IIa}. Note that this immersion is a degenerate almost complex surface, as stated in the example.

\subsubsection*{\textbf{Angle} $\mathbf{\phi}$ \textbf{is between zero and} $\mathbf{\frac{\pi}{3}}$}
Lemma \ref{lem:Classification of types} then implies the immersion $p$ is of type $I$, while the immersion $q$ is of type $IV$, with constant angles $\psi\in\left]\frac{\pi}{3},\frac{2\pi}{3}\right[$ and $\xi\in\left]\frac{5\pi}{3}, 2\pi\right[$. Lemma \ref{lem:TypeI} shows that $p$ is congruent to an open part of either the immersion in Example \ref{example:Ia} or the immersion in Example \ref{example:Ib}, with congruency condition $3(1+\lambda^2)^2=\lambda^2\sin^2\psi$. Similarly, Lemma Lemma \ref{lem:TypeIV} yields that the immersion $q$ is congruent to an open part of the immersion in Example \ref{example:IV}, with congruency condition $3\tanh^2(2\beta)=4\sin^2\xi$. Introducing compatible null coordinates on these immersions, as described in Lemma \ref{lem:null coord} then shows that the surface $\Sigma$ is congruent to an open part of the immersion defined in Example \ref{example:IaIV} or in Example \ref{example:IbIV}. A straightforward, but tedious calculation shows that these immersions, satisfying the congruency conditions, are indeed degenerate almost complex surfaces in the nearly K\"ahler $\SL$.

\subsubsection*{\textbf{Angle} $\mathbf{\phi}$ \textbf{is equal to} $\mathbf{\frac{\pi}{3}}$} The constant angles $\psi$ and $\xi$ now satisfy $\psi=\frac{2\pi}{3}$ and $\xi=0$. Lemma \ref{lem:Classification of types} thus implies that the immersion $p$ is of type $II$, while immersion $q$ is of type $IV$. Note that one can deduce that $p$ is actually of type $II_b$ from Lemma \ref{lem:TypeII} and that $q$ is congruent to an open part of the immersion in Example \ref{example:IV} with congruency condition $3\tanh^2(2\beta)=0$, due to Lemma \ref{lem:TypeIV}. Note that this implies the vanishing of the parameter $\beta$. Lemma \ref{lem:null coord} then once again gives compatibility conditions for the introduced null coordinates on both immersions, showing that $\Sigma$ has to be congruent to an open part of the immersion $F_b$ in Example \ref{example:II-IV}, which defines an degenerate almost complex surface.

\subsubsection*{\textbf{Angle} $\mathbf{\phi}$ \textbf{is between} $\mathbf{\frac{\pi}{3}}$ \textbf{and} $\mathbf{\frac{2\pi}{3}}$}
In this case both the immersion $p$ and the immersion $q$ are of type $IV$, with constant angles $\psi\in\left]\frac{2\pi}{3}, \pi\right[$ and $\xi\in\left]0,\frac{\pi}{3}\right[$, as shown in Lemma \ref{lem:Classification of types}. Lemma \ref{lem:TypeIV} and Lemma \ref{lem:null coord} then show, in a similar way as the previous cases, that the surface $\Sigma$ is locally congruent to an open part of the immersion in Example \ref{example:IVIV}, with additional congruency conditions $3\tanh^2(2\alpha)=4\sin^2\psi$ and $3\tanh^2(2\beta)=4\sin^2\xi$. One can, after a rather lengthy computation, also check that these conditions make this immersion a degenerate almost complex surface.

\subsubsection*{\textbf{Angle} $\mathbf{\phi}$ \textbf{is equal to} $\mathbf{\frac{2\pi}{3}}$} Table \ref{table:types} shows that the immersion $p$ is now of type $IV$, while the immersion $q$ is of type $II$, with the angle $\psi$ equal to $\pi$ and the angle $\xi$ equal to $\frac{\pi}{3}$. Recall that the map 
\begin{align*}
\mathcal{F}_1:\SL\rightarrow\SL: (p,q)\mapsto(q,p),
\end{align*}
is an isometry, as shown in Lemma \ref{lem:isom}. Thus one can equivalently consider the case where $p$ is of type $II$, with respect to the angle $\xi=\frac{\pi}{3}$ and $q$ is of type $IV$, with respect to the angle $\psi=\pi$. Lemma \ref{lem:TypeII} then shows that the immersion $p$ is of type $II_a$, while Lemma \ref{lem:TypeIV} shows that $q$ is locally congruent to an open part of Example \ref{example:IV}, with the congruency condition implying that $\beta$ is once again zero. Completely analogous to the case where the angle $\phi$ was equal to zero, we can now deduce that the immersed surface $\Sigma$ is locally congruent to an open part of the immersion $F_a$ in Example \ref{example:II-IV}, which is automatically a degenerate almost complex surface.
\subsubsection*{\textbf{Angle} $\mathbf{\phi}$ \textbf{is between} $\mathbf{\frac{2\pi}{3}}$ \textbf{and} $\mathbf{\pi}$} Lemma \ref{lem:Classification of types} then immediately shows that the immersion $p$ is of type $IV$, while the immersion $q$ is of type $I$, with constant angle functions $\psi\in\left]\pi, \frac{4\pi}{3}\right[$ and $\xi\in\left]\frac{\pi}{3}, \frac{2\pi}{3}\right[$. We will once again use the isometry $\mathcal{F}_1$ to see that this case is equivalent with considering the immersion $p$ to be of type $I$, with respect to the angle  with angle $\xi$ between $\frac{\pi}{3}$ and $\frac{2\pi}{3}$ and immersion $q$ of type $IV$, with respect to the angle $\xi$ between $\pi$ and $\frac{4\pi}{3}$. Note that this is completely analogous to the case where the angle $\phi$ is between zero and $\pi$. Thus the surface $\Sigma$ is congruent to an open part of either the immersion in Example \ref{example:IaIV} or the one given in Example \ref{example:IbIV}, with congruency conditions $3(1+\lambda^2)^2=\lambda^2\sin^2\xi$ and $3\tanh^2(2\beta)=4\sin^2\xi$. Note that these congruency conditions ensure that these immersions are degenerate almost complex surfaces.
\subsubsection*{\textbf{Angle} $\mathbf{\phi}$ \textbf{is equal to} $\mathbf{\pi}$}
In this case constant angle functions satisfy  $\psi=\frac{4\pi}{3}$ and $\xi=\frac{2\pi}{3}$. Lemma \ref{lem:Classification of types} and Lemma \ref{lem:TypeII} imply that the immersion $p$ is then of type $II_c$ and the immersion $q$ is of type $II_b$. Using the isometry $\mathcal{F}_1$ one can also consider the equivalent case where $p$ is of type $II_b$ and $q$ is of type $II_c$. Lemma \ref{lem:TypeII} then shows that this is in turn also equivalent with the case that $p$ is of type $II_a$ and $q$ of type $II_d$ as this is simply an interchanging of coordinates on both immersions. This is exactly the case when the angle $\phi$ is equal to zero, thus the surface $\Sigma$ is locally congruent to an open part of the immersion in Example \ref{example:II-IIa}.
\subsubsection*{\textbf{Angle} $\mathbf{\phi}$ \textbf{is between} $\mathbf{\pi}$ \textbf{and} $\mathbf{\frac{4\pi}{3}}$}
We then immediately obtain that the immersion $p$ is of type $I$ and the immersion $q$ is of type $II$, with $\psi\in\left]\frac{4\pi}{3}, \frac{5\pi}{3} \right[$ and $\xi\in\left]\frac{2\pi}{3}, \pi \right[$.
The immersions $p$ is thus of type $I$ and the immersion $q$ of type $IV$, as shown in Table \ref{table:types}. This is exactly the same case as when the angle $\phi$ is between zero and $\frac{\pi}{3}$, thus $\Sigma$ is now locally congruent to an open part of either the immersion in Example \ref{example:IaIV}, or the immersion in Example \ref{example:IbIV}, with congruency conditions $3(1+\lambda^2)^2=\lambda^2\sin^2\psi$ and $3\tanh^2(2\beta)=4\sin^2\xi$. Thus once again these conditions make the immersion a degenerate almost complex surface in $\SL$, as a straightforward, but lengthy calculation can show.. 
\subsubsection*{\textbf{Angle} $\mathbf{\phi}$ \textbf{is equal to} $\mathbf{\frac{4\pi}{3}}$}
Lemma \ref{lem:Classification of types} and Lemma \ref{lem:TypeII} imply that the immersion $p$ is of type $II_d$ and the immersion $q$ is of type $IV$, with constant angle functions $\psi=\frac{5\pi}{3}$ and $\xi=\pi$. An analogous reasoning as in the case where the angle $\phi$ is equal to $\frac{\pi}{3}$, yields that $\Sigma$ is locally congruent to an open part of the immersion $F_d$ in Example \ref{example:II-IV}.
\subsubsection*{\textbf{Angle} $\mathbf{\phi}$ \textbf{is between} $\mathbf{\frac{4\pi}{3}}$ \textbf{and} $\mathbf{\frac{5\pi}{3}}$}
Table \ref{table:types} shows that the immersions $p$ and $q$ are now both of type $IV$, with angle functions $\psi\in\left]\frac{5\pi}{3},2\pi \right[$ and $\xi\in\left]\pi,\frac{4\pi}{3} \right[$. Note that this is similar to the case where the angle function $\phi$ is between $\frac{\pi}{3}$ and $\frac{2\pi}{3}$. Thus we can conclude that the surface $\Sigma$ is locally congruent to an open part of the immersion in Example \ref{example:IVIV}, with congruency conditions $3\tanh^2(2\alpha)=4\sin^2\psi$ and $3\tanh^2(2\beta)=4\sin^2\xi$, making this immersions once again an almost complex surface.

\subsubsection*{\textbf{Angle} $\mathbf{\phi}$ \textbf{is equal to} $\mathbf{\frac{5\pi}{3}}$}
Lemma \ref{lem:Classification of types} and Lemma \ref{lem:TypeII} imply that the immersion $p$ in this case is of type $IV$, while the immersion $q$ is of type $II_c$, with the angle $\psi$ equal to zero and the angle $\xi$ equal to $\frac{4\pi}{3}$. Applying the isometry $\mathcal{F}_1$ shows than we can consider the equivalent case where the immersion $p$ is of type $II_c$ and the immersion $q$ is of type $IV$, with respect to the angle $\psi=0$. Analogously to the case where the angle $\phi=\frac{\pi}{3}$, one can deduce that $\Sigma$ is locally congruent to an open part of the immersion $F_c$ in Example \ref{example:II-IV}. 
\subsubsection*{\textbf{Angle} $\mathbf{\phi}$ \textbf{is between} $\mathbf{\frac{5\pi}{3}}$ \textbf{and} $\mathbf{2\pi}$}
In this final case on has that the immersion $p$ is of type $IV$ and the immersion $q$ is of type $I$, with angles $\psi\in\left]0,\frac{\pi}{3} \right[$ and $\xi\in\left]\frac{5\pi}{3}, 2\pi \right[$. Remark that this case is equivalent to the case where the angle $\phi$ is between $\frac{2\pi}{3}$ and $\pi$. We can thus conclude that the surface $\Sigma$ is congruent to an open part of either the immersion in Example \ref{example:IaIV} or the one given in Example \ref{example:IbIV}, with congruency conditions $3(1+\lambda^2)^2=\lambda^2\sin^2\xi$ and $3\tanh^2(2\beta)=4\sin^2\xi$. This immersion is finally also a degenerate almost complex surface, when one imposes these congruency conditions.

This concludes the proof of the theorem, as we have covered all possible values of the constant angle function $\phi$.
\end{proof}
\bibliographystyle{plain}
\bibliography{citationsSL2}
\end{document}